\documentclass[11pt, DIV10,a4paper]{article}
\usepackage{color}
\usepackage{float}
\usepackage[bf,small]{caption2}
\usepackage{comment}
\usepackage{amsmath}
\usepackage{bigints}
\usepackage{rotating, booktabs}
\usepackage{amsopn}
\usepackage{amsfonts}
\usepackage{amsthm}
\usepackage{amssymb}
\usepackage{amsbsy}
\usepackage{multirow}
\usepackage{slashbox}
\usepackage{natbib}
\usepackage{tocbibind}
\hyphenpenalty=1000
\usepackage[a4paper,colorlinks,breaklinks,bookmarksopen,bookmarksnumbered]{hyperref}
\usepackage[refpage]{nomencl}
\usepackage{makeidx}
\usepackage{pst-all}
\usepackage{epsfig,psfrag}
\usepackage{graphicx}
\usepackage{amsmath}
\usepackage{epstopdf}
\usepackage{tabularx}
\usepackage{subfigure}
\usepackage{setspace}
\usepackage[mathscr]{euscript}
\usepackage[margin=1in]{geometry}
\usepackage{xr-hyper}
\singlespacing

\usepackage{amsfonts} 
\usepackage{calc}

\newcommand{\ueq}[1][]{%
  \if\relax\detokenize{#1}\relax
    \sbox0{$\underbrace{=}_{}$}%
    \mathrel{\mathmakebox[\wd0]{=}}
  \else
    \mathrel{\underbrace{=}_{\mathclap{#1}}}
  \fi}

\newcommand {\ctn}{\cite}

\pagenumbering{arabic}

\usepackage{url}
\hyphenpenalty=1000
\newcommand {\btheta}{\mbox{$\theta$}}

\newcommand {\bTheta}{\mbox{$\Theta$}}

\newcommand{\bX}{\mathbf X}
\newcommand{\by}{\mathbf y}
\newcommand{\bY}{\mathbf Y}
\newcommand{\bU}{\mathbf U}

\newtheorem{theorem}{Theorem}

\numberwithin{equation}{section}
\numberwithin{algo}{section}
\numberwithin{table}{section}
\numberwithin{figure}{section}

\newtheorem{remark}[theorem]{Remark}

\normalsize


\bibliographystyle{natbib}

\begin{document}

\title{\textbf{Convergence of Pseudo-Bayes Factors in Forward and Inverse Regression Problems}}
\author{Debashis Chatterjee$^{\dag}$ and Sourabh Bhattacharya$^{\dag, +}$ }
\date{}
\maketitle
\begin{center}
$^{\dag}$ Indian Statistical Institute\\
$+$ Corresponding author:  \href{mailto: bhsourabh@gmail.com}{bhsourabh@gmail.com}
\end{center}

\begin{abstract}
In the Bayesian literature on model comparison, Bayes factors play the leading role. In the classical statistical literature, model selection criteria are often devised
used cross-validation ideas. Amalgamating the ideas of Bayes factor and cross-validation \ctn{Geisser79} created the pseudo-Bayes factor.
The usage of cross-validation inculcates several theoretical advantages, computational simplicity and numerical stability in Bayes factors 
as the marginal density of the entire dataset is replaced with products of cross-validation densities of individual data points. 
	
However, the popularity of pseudo-Bayes factors is still negligible in comparison with Bayes factors, with respect to both theoretical investigations and
practical applications. In this article, we establish almost sure exponential convergence of pseudo-Bayes factors for large samples under a general setup
consisting of dependent data and model misspecifications. We particularly focus on general parametric and nonparametric regression setups in both forward and inverse contexts.
In forward regression the goal is to predict the response given some observed value of the covariate and the rest of the data, while
in inverse regression the objective is to infer about unobserved covariate values from observed responses and covariates. For the Bayesian treatment that we
consider here, a prior for the unknown covariate value is needed. 

Depending upon forward and inverse regression ideas, our asymptotic theory manifests itself in terms of 
almost sure exponential convergence 
of the pseudo-Bayes factor in terms of the Kullback-Leibler divergence rate or its integrated version, between the competing and the true models.
Our asymptotic theory encompasses general model selection, variable selection and combinations of both.

We illustrate our theoretical results with various examples, providing explicit calculations. We also supplement our asymptotic theory with simulation experiments
in small sample situations of Poisson log regression and geometric logit and probit regression, additionally addressing the variable selection problem. We consider
both linear and nonparametric regression modeled by Gaussian processes for our purposes. Our simulation results provide quite interesting insights into the usage of
pseudo-Bayes factors in forward and inverse setups.
\\[2mm]
{\bf Keywords:} {\it Forward and inverse regression; Kullback-Leibler divergence; Leave-one-out cross-validation; Pseudo-Bayes factor; Poisson and geometric regression; 
Posterior convergence.}
\end{abstract}

\section{Introduction}
\label{sec:intro}

The Bayesian statistical literature on model selection is rich in its collection of innovative methodologies. Among them 
the most principled method of comparing different competing models seems to be offered by Bayes factors, through the ratio of the posterior and
prior odds associated with the models under comparison, which reduces to the ratio of the marginal densities of the data under the two models.  
To illustrate, let us consider the problem of comparing any two models $\mathcal M_1$ and $\mathcal M_2$ 
given data $\bY_n=\{y_1,y_2,\ldots,y_n\}$, where $n$ is the sample size. Let
$\Theta_1$ and $\Theta_2$ be the parameter spaces associated with $\mathcal M_1$ and $\mathcal M_2$, respectively. 
For $j=1,2$, let the likelihoods, priors 
and the marginal densities for the two models be $L_n(\theta_j|\mathcal M_j)$. 
$\pi(\theta_j|\mathcal M_j)$ and $m(\bY_n|\mathcal M_j)=\int_{\Theta_j} L_n(\theta_j|\mathcal M_j)\pi(d\theta_j|\mathcal M_j)$,
respectively. Then the Bayes factor (BF) of model $\mathcal M_1$ against $\mathcal M_2$ is given by
\begin{equation}
	BF^{(n)}(\mathcal M_1,\mathcal M_2)=\frac{m(\bY_n|\mathcal M_1)}{m(\bY_n|\mathcal M_2)}.
	\label{eq:bf}
\end{equation}
The above formula follows directly from the coherent procedure of Bayesian hypothesis testing of one model versus the other. 
In view of (\ref{eq:bf}), $ BF^{(n)}(\mathcal M_1,\mathcal M_2)$ admits the interpretation as the quantification of the evidence of $\mathcal M_1$
against $\mathcal M_2$, given data $\bY_n$. A comprehensive account of BF and its various advantages are provided
in \ctn{Kass95}. BFs  have interesting asymptotic convergence properties. Indeed, recently \ctn{Chatterjee18}
establish the almost sure convergence theory of BF in the general setup that includes even dependent data and misspecified models.
Their result depends explicitly on the average Kullback-Leibler (KL) divergence between the competing and the true models. 

BFs are known to have several limitations. First, if the prior for the model parameter $\theta_j$ is improper, then the marginal density $m(\cdot|\mathcal M_j)$
is also improper and hence $m(\bY_n|\mathcal M_j)$ does not admit any sensible interpretation. Second, BFs suffer from the Jeffreys-Lindley-Bartlett paradox
(see \ctn{Jeffreys39}, \ctn{Lindley57}, \ctn{Bartlett57}, \ctn{Robert93}, \ctn{Villa15} for details and general discussions on the paradox). 
Furthermore, a drawback of BFs in practical applications is that the marginal density of the data $\bY_n$ is usually quite challenging to compute accurately, even with 
sophisticated simulation techniques based on importance sampling, bridge sampling and path sampling (see, for example, \ctn{Meng96}, \ctn{Gelman98}; see also \ctn{Gronau17} 
for a relatively recent tutorial and many relevant references), 
particularly when the posterior is far from normal and when the dimension of the parameter space is large. 
Moreover, the marginal density is usually extremely close to zero if $n$ is even moderately large.
This causes numerical instability in computation of the BF.

The problems of BFs regarding improper prior, Jeffreys-Lindley-Bartlett paradox, 
and general computational difficulties associated with the marginal density can be simultaneously alleviated 
if the marginal density $m(\bY_n|\mathcal M_j)$ for model $\mathcal M_j$ is replaced
with the product of leave-one-out cross-validation posteriors $\prod_{i=1}^n\pi\left(y_i|\bY_{n,-i},\mathcal M_j\right)$, where
$\bY_{n,-i}=\bY_n\backslash\{y_i\}=\{y_1,\ldots,y_{i-1},y_{i+1},\ldots,y_n\}$, and
\begin{equation}
	\pi\left(y_i|\bY_{n,-i},\mathcal M_j\right)=\int_{\Theta_j}f(y_i|\theta_j,y_1,\ldots,y_{i-1},\mathcal M_j)d\pi\left(\theta_j|\bY_{n,-i},\mathcal M_j\right)
	\label{eq:xval1}
\end{equation}
is the $i$-th leave-one-out cross-validation posterior density evaluated at $y_i$. 
In the above equation (\ref{eq:xval1}), $f(y_i|\theta_j,y_1,\ldots,y_{i-1},\mathcal M_j)$ is the density of $y_i$ given model parameters $\theta_j$ and $y_1,\ldots,y_{i-1}$;
$\pi\left(\theta_j|\bY_{n,-i},\mathcal M_j\right)$ is the posterior distribution of $\theta_j$ given $\bY_{n,-i}$. 
Viewing $\prod_{i=1}^n\pi\left(y_i|\bY_{n,-i},\mathcal M_j\right)$ as the surrogate for $m(\bY_n|\mathcal M_j)$, it seems 
reasonable to replace $BF^{(n)}(\mathcal M_1,\mathcal M_2)$ with the corresponding pesudo-Bayes factor (PBF) given by
\begin{equation}
	PBF^{(n)}(\mathcal M_1,\mathcal M_2)=\frac{\prod_{i=1}^n\pi\left(y_i|\bY_{n,-i},\mathcal M_1\right)}{\prod_{i=1}^n\pi\left(y_i|\bY_{n,-i},\mathcal M_2\right)}.
	\label{eq:pbf}
\end{equation}
In the case of independent observations, the above formula and the terminology ``pseudo-Bayes factor" seem to be first proposed by \ctn{Geisser79}. 
Their motivation for PBF did not seem to arise as providing solutions to the problems of BFs, however, 
but rather the urge to exploit the concept of cross-validation in Bayesian model selection, which had been proved to be 
indispensable for constructing model selection criteria in the classical statistical paradigm. 
Below we argue how this cross-validation idea helps solve the aforementioned problems of BFs.

First note that the posterior 
$\pi\left(\theta_j|\bY_{n,-i},\mathcal M_j\right)$ is usually proper even for improper prior for $\theta_j$ is $n$ is sufficiently large.
Thus, $\pi\left(y_i|\bY_{n,-i},\mathcal M_j\right)$ given by (\ref{eq:xval1}) is usually well-defined even for improper priors, unlike $m(\bY_n|\mathcal M_j)$. 
So, even though BF is ill-defined for improper priors, PBF is usually still well-defined.

Second, a clear theoretical advantage of PBF over BF is that PBF is immune to the problem of Jeffreys-Lindley-Bartlett paradox (see \ctn{Gelfand94} for example), while
BF is certainly not. 

Finally, PBF enjoys significant computational advantages over BF.
Note that straightforward Monte Carlo averages of $f(y_i|\theta_j,y_1,\ldots,y_{i-1},\mathcal M_j)$ over realizations of $\theta$ obtained from 
$\pi\left(\theta|\bY_{n,-i},\mathcal M_j\right)$ by simulation techniques is sufficient to ensure good estimates of 
the cross-validation posterior density $\pi\left(y_i|\bY_{n,-i},\mathcal M_j\right)$.
Since $\pi\left(y_i|\bY_{n,-i},\mathcal M_j\right)$ is the density of $y_i$ individually, the estimate is also numerically stable compared to estimates of
$m(\bY_n|\mathcal M_j)$. Hence, the sum of logarithms of the estimates of $\pi\left(y_i|\bY_{n,-i},\mathcal M_j\right)$, for $i=1,\ldots,n$, results in
quite accurate and stable estimates of $\log\left[\prod_{i=1}^n\pi\left(y_i|\bY_{n,-i},\mathcal M_j\right)\right]$.
In other words, PBF is far simpler to compute accurately than BF and is numerically far more stable and reliable.

In spite of the advantages of PBF over BF, it seems to be largely ignored in the statistical literature, both theoretically and application-wise.  
Some asymptotic theory of PBF has been attempted by \ctn{Gelfand94} using independent observations, Laplace approximations and 
some essentially ad-hoc simplifying approximations and arguments. Application of PBF has been considered in \ctn{Bhattacharya08} for demonstrating the superiority
of his new Bayesian nonparametric Dirichlet process model over the traditional Dirichlet process mixture model. But apart from these works we are not aware
of any other significant research involving PBF.

In this article, we establish the asymptotic theory for PBF in the general setup consisting of dependent observations, model misspecifications as well as covariates;
inclusion of covariates also validates our asymptotic theory in the variable selection framework. Judiciously exploiting the posterior convergence treatise
of \ctn{Shalizi09} we prove almost sure exponential convergence of PBF in favour of the true model, the convergence explicitly depending upon 
the KL-divergence rate from the true model.
For any two models different from the true model, we prove almost sure exponential convergence of PBF in favour of the better model, where the convergence 
depends explicitly upon the difference between KL-divergence rates from the true model. Thus, our PBF convergence results agree with the BF convergence
results established in \ctn{Chatterjee18}.

An important aspect of our PBF research involves establishing its convergence properties even for ``inverse regression problems", and even if one of the two competing models
involve ``inverse regression" and the other ``forward regression". We distinguish forward and inverse regression as follows. 
In forward regression problems the goal is to predict the response from a given covariate value and the rest of the
data. On the other hand, in inverse regression unknown values of the covariates are to be predicted given the observed response and the rest of the data. 
Crucially, Bayesian inverse regression problems require priors on the covariate values to be predicted. 
In our case, the inverse regression setup has been motivated by the quantitative palaeoclimate reconstruction problem where `modern data' 
consisting of multivariate counts of species
are available along with the observed climate values. Also available are fossil assemblages of the same species, but deposited in lake sediments 
for past thousands of years. This is the fossil species data. However, the past climates corresponding to the fossil species data are unknown, and it is of interest
to predict the past climates given the modern data and the fossil species data. Roughly, the species composition are regarded as functions
of climate variables, since in general ecological terms, variations in climate drives variations in species, but not vice versa.
However, since the interest lies in prediction of climate variables,  the inverse nature of the problem is clear. The past climates, which must be regarded as random variables,
may also be interpreted as {\it unobserved covariate values}. It is thus natural to put a prior probability distribution on the unobserved covariate values.
Various other examples of inverse regression problems are provided in \ctn{Chatterjee17}.

In this article, we consider two setups of inverse regression and establish almost sure exponential convergence of PBF 
in general inverse regression for both the setups. These include situations where one of the competing models involve
forward regression and the other is associated with inverse regression. 

We illustrate our asymptotic results with various theoretical 
examples in both forward and inverse regression contexts, including forward and inverse variable selection problems.
We also follow up our theoretical investigations with simulation experiments in small samples involving Poisson and geometric forward and inverse 
regression models with relevant link functions and both linear regression and nonparametric regression, the latter modeled by Gaussian processes.
We also illustrate variable selection in the aforementioned setups with two different covariates. The results that we obtain are quite encouraging and
illuminating, providing useful insights into the behaviour of PBF for forward and inverse parametric and nonparametric regression.	

The roadmap for the rest of our paper is as follows. We begin our progress by discussing and formalizing the relevant aspects of 
forward and inverse regression problems and the associated pseudo-Bayes factors in Section \ref{sec:prelims}.
Then in Section \ref{sec:shalizi_briefing} we include a brief overview of Shalizi's approach to treatment of posterior convergence which we usefully
exploit for our treatise of PBF asymptotics; further details are provided in Appendix \ref{subsec:assumptions_shalizi}.
Convergence of PBF in the forward regression context is established in Section \ref{sec:conv_forward}, while in Sections \ref{sec:pbf_inverse_1st} and 
\ref{sec:pbf_inverse_2nd} we establish convergence of PBF in the two setups related to inverse regression.
In Sections \ref{sec:illustrations_forward} and \ref{sec:illustrations_inverse} we provide theoretical illustrations of 
PBF convergence in forward and inverse setups, respectively, with various examples including variable selection.
Details of our simulation experiments with small samples involving Poisson and geometric linear and Gaussian process regression for relevant link functions, under both
forward and inverse setups, are reported in Section \ref{sec:simstudy}, which also includes experiments on variable selection.
Finally, we summarize our contributions and provide future directions in Section \ref{sec:conclusion}.


\section{Preliminaries and general setup for forward and inverse regression problems}
\label{sec:prelims}

Let us first consider the forward regression setup.
\subsection{Forward regression problem}
\label{subsec:forward}
For $i=1,\ldots,n$, let observed response $y_i$ be related to observed covariate $x_i$ through
\begin{equation}
	y_1\sim f(\cdot|\theta,x_1)~\mbox{and}~y_i\sim f(\cdot|\theta,x_i,\bY^{(i-1)})~\mbox{for}~i=2,\ldots,n,
	\label{eq:forward0}
\end{equation}
where for $i=2,\ldots,n$, $\bY^{(i)}=\{y_1,\ldots,y_i\}$ and $f(\cdot|\theta,x_1)$, $f(\cdot|\theta,x_i,\bY^{(i-1)})$ are known densities depending upon 
(a set of) parameters $\theta\in\Theta$, where $\Theta$ is the parameter space, which may be infinite-dimensional. 
For the sake of generality, we shall consider $\theta=(\eta,\xi)$, where $\eta$ is a function of the covariates, which we more explicitly denote as $\eta(x)$. 
The covariate $x\in\mathcal X$, $\mathcal X$ being the space of covariates. The part $\xi$ of $\eta$ will be assumed to consist of other parameters, such as the unknown
error variance. For Bayesian forward regression problems, some prior needs to be assigned on the parameter space $\Theta$.
For notational convenience, we shall denote $f(\cdot|\theta,x_1)$ by $f(\cdot|\theta,x_1,\bY^{(0)})$, so that we can represent (\ref{eq:forward0}) more conveniently
as 
\begin{equation}
	y_i\sim f(\cdot|\theta,x_i,\bY^{(i-1)})~\mbox{for}~i=1,\ldots,n.
	\label{eq:forward1}
\end{equation}

\subsubsection{Examples of the forward regression setup}
\label{subsubsec:model_examples_forward}
\begin{itemize}
\item[(i)] $y_{i}\sim Bernoulli(p_i)$, where $p_i=H\left(\eta(x_i)\right)$, where $H$ is some appropriate link function and $\eta$ is some 
function with known or unknown form. For known, suitably parameterized form, the model is parametric. If the form of $\eta$ is unknown, 
one may model it by a Gaussian process, assuming adequate smoothness of the function.
\item[(ii)] $y_{i}\sim Poisson(\lambda_i)$, where $\lambda_i=H\left(\eta(x_i)\right)$, where $H$ is some appropriate link function and $\eta$ is 
some function with known (parametric) or unknown (nonparametric) form. Again, in case of unknown form of $\eta$, 
the Gaussian process can be used as a suitable model under sufficient smoothness
assumptions.
\item[(iii)] $y_{i}=\eta(x_i)+\epsilon_{i}$, where $\eta$ is a parametric or nonparametric function and
$\epsilon_{i}$ are $iid$ Gaussian errors. In particular, $\eta(x_i)$ may be a linear regression function, that is, $\eta(x_i)=\beta'x_i$, where
$\beta$ is a vector of unknown parameters.
Non-linear forms of $\eta$ are also permitted. 
Also, $\eta$ may be a reasonably smooth function of unknown form, modeled by some appropriate Gaussian process.		
\end{itemize}

\subsection{Forward pseudo-Bayes factor}
\label{subsec:pbf_forward}
Letting $\bY_{n}=\left\{y_{i}:i=1,\ldots,n\right\}$, $\bX_n=\{x_i:i=1,\ldots,n\}$, $\bY_{n,-i}=\bY_n\backslash\{y_i\}$ and $\bX_{n,-i}=\bX_n\backslash\{x_i\}$,
let $\pi(y_i|\bY_{n,-i},\bX_n,\mathcal M)$ denote the posterior density at $y_i$, given data $\bY_{n,-i}$, $\bX_n$ and model $\mathcal M$.
Let the density of $y_i$ given $\theta$ and $x_i$ under model $\mathcal M$ be denoted by $f(y_i|\theta,x_i,\bY^{(i-1)}\mathcal M)$. Then
note that
\begin{equation}
	\pi(y_i|\bY_{n,-i},\bX_n,\mathcal M)=\int_{\Theta} f(y_i|\theta,x_i,\bY^{(i-1)},\mathcal M)d\pi(\theta|\bY_{n,-i},\bX_{n,-i},\mathcal M),
\label{eq:post_forward1}
\end{equation}
where 
\begin{equation}
	\pi(\theta|\bY_{n,-i},\bX_{n,-i},\mathcal M)\propto\pi(\theta)\prod_{j\neq i;j=1}^{n}f(y_j|\theta,x_j,\bY^{(j-1)},\mathcal M).
\label{eq:post_forward2}
\end{equation}
For any two models $\mathcal M_1$ and $\mathcal M_2$, the forward pseudo Bayes factor (FPBF) of $\mathcal M_1$ against $\mathcal M_2$ 
based on the cross-validation posteriors of the form (\ref{eq:post_forward1}) is defined as follows:
\begin{equation}
	FPBF^{(n)}(\mathcal M_1,\mathcal M_2)
		=\frac{\prod_{i=1}^n\pi(y_i|\bY_{n,-i},\bX_n,\mathcal M_1)}{\prod_{i=1}^n\pi(y_i|\bY_{n,-i},\bX_n,\mathcal M_2)},
		\label{eq:pbf_forward}
\end{equation}
and we are interested in studying the limit $\underset{n\rightarrow\infty}{\lim}~\frac{1}{n}\log FPBF^{(n)}(\mathcal M_1,\mathcal M_2)$ for almost all data sequences.


\subsection{Inverse regression problem: first setup}
\label{subsec:inverse}


In inverse regression, the basic premise remains the same as in forward regression detailed in Section \ref{subsec:forward}. In other words,
the distribution $f(\cdot|\theta,x_i,\bY^{(i-1)})$, parameter $\theta$, the parameter and the covariate space remain the same as in the forward regression setup. 
However, unlike in Bayesian forward regression problems where a prior needs to be assigned only to the unknown parameter $\theta$, a prior is also required for
$\tilde x$, the unknown covariate observation associated with known response $\tilde y$, say. Given the entire dataset and $\tilde y$, the problem
in inverse regression is to predict $\tilde x$. Hence, in the Bayesian inverse setup, a prior on $\tilde x$ is necessary. Given model $\mathcal M$
and the corresponding parameters $\theta$, we denote such prior
by $\pi(\tilde x|\theta,\mathcal M)$. For Bayesian cross-validation in inverse problems it is pertinent to successively leave out $(y_i,x_i)$; $i=1,\ldots,n$,
and compute the posterior predictive distribution $\pi(\tilde x_i|\bY_n,\bX_{n,-i})$, from $y_i$ and the rest of the data $(\bY_{n,-i},\bX_{n,-i})$
(see \ctn{Bhatta07}).
But these posteriors are not useful for Bayes of pesudo-Bayes factors even for inverse regression setups. 
The reason is that the Bayes factor for inverse regression is still the ratio of posterior odds and prior odds associated with the competing models, which
as usual translates to the ratio of the marginal densities of the data under the two competing models. The marginal densities depend upon the prior 
for $(\theta,\tilde x)$, however, under the competing models. The pseudo-Bayes factor for inverse models is then the ratio of products of the cross-validation
posteriors of $y_i$, where $\theta$ and $\tilde x_i$ are marginalized out. Details of such inverse cross-validation posteriors and the definition of
pseudo-Bayes factors for inverse regression are given below.

\subsubsection{Inverse pseudo-Bayes factor in this setup}
\label{subsubsec:ibpf1}
In the inverse regression setup, first note that
\begin{align}
&\pi(\tilde x_i,\theta|\bY_{n,-i},\bX_{n,-i},\mathcal M)\notag\\
	&=\frac{\pi(\tilde x_i,\theta|\mathcal M)\prod_{j\neq i;j=1}^nf(y_j|\theta,x_j,\bY^{(j-1)},\mathcal M)}
	{\int_{\mathcal X}\int_{\Theta} d\pi(u,\psi)\prod_{j\neq i;j=1}^nf(y_j|\psi,x_j,\bY^{(j-1)},\mathcal M)}\notag\\
	&=\frac{\pi(\tilde x_i|\theta,\mathcal M)\pi(\theta|\mathcal M)\prod_{j\neq i;j=1}^nf(y_j|\theta,x_j,\bY^{(j-1)},\mathcal M)}
	{\int_{\mathcal X}\int_{\Theta}d\pi(u|\psi,\mathcal M)d\pi(\psi|\mathcal M)\prod_{j\neq i;j=1}^nf(y_j|\psi,x_j,\bY^{(j-1)},\mathcal M)}\notag\\
	&=\frac{\pi(\tilde x_i|\theta,\mathcal M)\pi(\theta|\mathcal M)\prod_{j\neq i;j=1}^nf(y_j|\theta,x_j,\bY^{(j-1)},\mathcal M)}
	{\int_{\Theta} d\pi(\psi|\mathcal M)\prod_{j\neq i;j=1}^nf(y_j|\psi,x_j,\bY^{(j-1)},\mathcal M)}
	=\pi(\tilde x_i|\theta,\mathcal M)\pi(\theta|\bY_{n,-i},\bX_{n,-i},\mathcal M).
	\label{eq:inv1}
\end{align}
Using (\ref{eq:inv1}) we obtain
\begin{align}
	\pi(y_i|\bY_{n,-i},\bX_{n,-i},\mathcal M)&=\int_{\mathcal X}\int_{\Theta} f(y_i|\theta,\tilde x_i,\bY^{(i-1)},\mathcal M)
	d\pi(\tilde x_i,\theta|\bY_{n,-i},\bX_{n,-i},\mathcal M),\notag\\
	&=\int_{\Theta}g(\bY^{(i)},\theta,\mathcal M)d\pi(\theta|\bY_{n,-i},\bX_{n,-i},\mathcal M),
\label{eq:post_inverse1}
\end{align}
where 
\begin{equation}
	g(\bY^{(i)},\theta,\mathcal M)=\int_{\mathcal X}f(y_i|\theta,\tilde x_i,\bY^{(i-1)},\mathcal M)d\pi(\tilde x_i|\theta,\mathcal M),
	\label{eq:g1}
\end{equation}
and $\pi(\theta|\bY_{n,-i},\bX_{n,-i},\mathcal M)$ is the same as (\ref{eq:post_forward2}).
For any two models $\mathcal M_1$ and $\mathcal M_2$, the inverse pseudo Bayes factor (IPBF) of $\mathcal M_1$ against $\mathcal M_2$ based on cross-validation
posteriors of the form (\ref{eq:post_inverse1}) is given by
	\begin{equation}
		IPBF^{(n)}(\mathcal M_1,\mathcal M_2)
		=\frac{\prod_{i=1}^n\pi(y_i|\bY_{n,-i},\bX_{n,-i},\mathcal M_1)}{\prod_{i=1}^n\pi(y_i|\bY_{n,-i},\bX_{n,-i},\mathcal M_2)},
		\label{eq:pbf_inverse}
	\end{equation}
and our goal is to investigate $\underset{n\rightarrow\infty}{\lim}~\frac{1}{n}\log IPBF^{(n)}(\mathcal M_1,\mathcal M_2)$ for almost all data sequences.

\subsection{Inverse regression problem: second setup}
\label{subsec:inverse2}
In the inverse regression context, we consider another setup under which \ctn{Chat20} establish consistency of the inverse cross-validation posteriors 
of $\tilde x_i$. Here we consider experiments with covariate observations $x_1, x_2, \ldots, x_n $ along with responses 
$\bY_{nm}=\{y_{ij} :i=1,\ldots,n, j=1,\ldots,m\}$.
In other words, the experiment considered here will allow us to have $m$ samples of responses $\by_i=\{y_{i1}, y_{i2}, \ldots, y_{im}\}$ against each covariate observation
$x_{i}$, for $i=1, 2,\ldots, n$. Again, both $x_i$ and $y_{ij}$ are allowed to be multidimensional. Let $\bY_{nm,-i}=\bY_{nm}\backslash\{\by_i\}$.

For $i=1,\ldots,n$ 
consider the following general model setup: 
conditionally on $\theta$, $x_i$ and $\bY^{(i-1)}_j=\{y_{1j},\ldots,y_{i-1,j}\}$,
\begin{eqnarray}\label{model}
\begin{aligned}
	& y_{ij} \sim f\left(\cdot|\theta,x_i,\bY^{(i-1)}_j\right);~j=1,\ldots,m, 
\end{aligned}
\end{eqnarray}
independently, where $f(\cdot|\theta,x_1,\bY^{(0)})=f(\cdot|\theta,x_1)$ as before.


\subsubsection{Prior for $\tilde x_i$}
\label{sec:prior}
Following \ctn{Chat20}, we consider the following prior for $\tilde x_i$: given $\theta$,
\begin{equation}
	\tilde x_i\sim U\left(B_{im}(\theta)\right),
\label{eq:prior_x}
\end{equation}
the uniform distribution on 
\begin{equation}
	B_{im}(\theta)=\left(\left\{x:H\left(\eta(x)\right)\in \left[\bar y_i-\frac{cs_i}{\sqrt{m}},\bar y_i+\frac{cs_i}{\sqrt{m}}\right]\right\}\right),
\label{eq:set1}
\end{equation}
where $H$ is some suitable transformation of $\eta(x)$.
In (\ref{eq:set1}), $\bar y_i=\frac{1}{m}\sum_{j=1}^my_{ij}$ and $s^2_i=\frac{1}{m-1}\sum_{j=1}^m(y_{ij}-\bar y_i)^2$, and $c\geq 1$ is some constant.
We denote this prior by $\pi(\tilde x_i|\eta)$. \ctn{Chat20} show that the density or any probability 
associated with $\pi(\tilde x_i|\eta)$ is continuous with respect to $\eta$.

\subsubsection{Examples of the prior}
\label{subsubsec:illustrations_prior}
\begin{itemize}
	\item[(i)] $y_{ij}\sim Poisson(\theta x_i)$, where $\theta>0$ and $x_i>0$ for all $i$. Here, under the prior $\pi(\tilde x_i|\theta)$, 
		$\tilde x_i$ has uniform distribution on the set 
		$B_{im}(\theta)=\left\{x>0:\frac{\bar y_i-\frac{cs_i}{\sqrt{m}}}{\theta}\leq x\leq \frac{\bar y_i+\frac{cs_i}{\sqrt{m}}}{\theta}\right\}$.
	\item[(ii)] $y_{ij}\sim Poisson(\lambda_i)$, where $\lambda_i=\lambda(x_i)$, with $\lambda(x)=H(\eta(x))$. Here $H$ is a known, one-to-one, 
		continuously differentiable function and $\eta(\cdot)$ is an unknown function modeled by Gaussian process.
		Here, the prior for $\tilde x_i$ is the uniform distribution on $$B_{im}(\eta)=\left\{x:\eta(x)\in 
		H^{-1}\left\{\left[\bar y_i-\frac{cs_i}{\sqrt{m}},\bar y_i+\frac{cs_i}{\sqrt{m}}\right]\right\}\right\}.$$
	\item[(iii)] $y_{ij}\sim Bernoulli(p_i)$, where $p_i=\lambda(x_i)$, with $\lambda(x)=H(\eta(x))$. Here $H$ is a known, increasing,
		continuously differentiable, cumulative distribution function and $\eta(\cdot)$ is an unknown function modeled by some appropriate Gaussian process.
		Here, the prior for $\tilde x_i$ is the uniform distribution on $B_{im}(\eta)=\left\{x:\eta(x)\in 
		H^{-1}\left\{\left[\bar y_i-\frac{cs_i}{\sqrt{m}},\bar y_i+\frac{cs_i}{\sqrt{m}}\right]\right\}\right\}$.
	\item[(iv)] $y_{ij}=\eta(x_i)+\epsilon_{ij}$, where $\eta(\cdot)$ is an unknown function modeled by some appropriate Gaussian process,
		and $\epsilon_{ij}$ are $iid$ zero-mean Gaussian noise with variance $\sigma^2$. 
		Here, the prior for $\tilde x_i$ is the uniform distribution on $B_{im}(\eta)=\left\{x:\eta(x)\in 
		\left[\bar y_i-\frac{cs_i}{\sqrt{m}},\bar y_i+\frac{cs_i}{\sqrt{m}}\right]\right\}$.
		If $\eta(x_i)=\alpha+\beta x_i$, then the prior for $\tilde x_i$ is the uniform distribution on $[a,b]$, where 
		$a=\min\left\{\frac{\bar y_i-\frac{cs_i}{\sqrt{m}}-\alpha}{\beta},\frac{\bar y_i+\frac{cs_i}{\sqrt{m}}-\alpha}{\beta}\right\}$
		and $b=\max\left\{\frac{\bar y_i-\frac{cs_i}{\sqrt{m}}-\alpha}{\beta},\frac{\bar y_i+\frac{cs_i}{\sqrt{m}}-\alpha}{\beta}\right\}$.
\end{itemize}
Further examples of the prior in various other inverse regression models are provided in Sections \ref{sec:illustrations_inverse} and 
\ref{sec:simstudy}. 

\subsubsection{Inverse pseudo-Bayes factor in this setup}
\label{subsubsec:ibpf2}
For any two models $\mathcal M_1$ and $\mathcal M_2$ we define inverse pseudo-Bayes factor for model $\mathcal M_1$ against model $\mathcal M_2$, for any $k\geq 1$, as
\begin{equation}
IPBF^{(n,m,k)}(\mathcal M_1,\mathcal M_2)=\frac{\prod_{i=1}^n\pi(y_{ik}|\bY_{nm,-i},\bX_{n,-i},\mathcal M_1)}{\prod_{i=1}^n\pi(y_{ik}|\bY_{nm,-i},\bX_{n,-i},\mathcal M_2)}
	\label{eq:pdf_inverse2}
\end{equation}
and study the limit $\underset{m\rightarrow\infty}{\lim}\underset{n\rightarrow\infty}{\lim}~\frac{1}{n}\log IPBF^{(n,m,k)}(\mathcal M_1,\mathcal M_2)$ for almost all data 
sequences.
Note that since $\left\{y_{ik};k\geq 1\right\}$ are distributed independently as $f\left(\cdot|\theta,x_i,\bY^{(i-1)}_k\right)$ given any $\theta$ and $x_i$, 
it would follow that if the limit exists, it must be the same for all $k\geq 1$.

Suppose that the true data-generating parameter $\theta_0$ is not contained in $\Theta$, the parameter space considered.
This is a case of misspecification that we must incorporate in our convergence theory of PBF. Our PBF asymptotics draws on posterior convergence theory for 
(possibly infinite-dimensional) parameters that also allows misspecification. In this regard, the approach presented in \ctn{Shalizi09} seems to be very appropriate.
Before proceeding further, we first provide a brief overview of this approach, which we conveniently exploit for our purpose.

\section{A brief overview of Shalizi's approach to posterior convergence}
\label{sec:shalizi_briefing}
Let $\bY_n=(Y_1,\ldots,Y_n)^T$, and let $f_{\theta}(\bY_n)$ and $f_{\theta_0}(\bY_n)$ denote the observed and the true likelihoods respectively, under the given value of the parameter $\theta$
and the true parameter $\theta_0$. We assume that $\theta\in\Theta$, where $\Theta$ is the (often infinite-dimensional) parameter space. However, we {\it do not} 
assume that $\theta_0\in\Theta$, thus allowing misspecification.
The key ingredient associated with Shalizi's approach to proving convergence of the posterior distribution of $\theta$ is to show that the 
asymptotic equipartition property holds.
To elucidate, let us consider the following likelihood ratio:
\begin{equation*}
R_n(\theta)=\frac{f_{\theta}(\bY_n)}{f_{\theta_0}(\bY_n)}.
\end{equation*}
Then, to say that for each $\theta\in\Theta$, the generalized or relative asymptotic equipartition property holds, we mean
\begin{equation}
\underset{n\rightarrow\infty}{\lim}~\frac{1}{n}\log R_n(\theta)=-h(\theta),
\label{eq:equipartition}
\end{equation}
almost surely, where
$h(\theta)$ is the KL-divergence rate given by
\begin{equation}
h(\theta)=\underset{n\rightarrow\infty}{\lim}~\frac{1}{n}E_{\theta_0}\left(\log\frac{f_{\theta_0}(\bY_n)}{f_{\theta}(\bY_n)}\right),
\label{eq:S3}
\end{equation}
provided that it exists (possibly being infinite), where $E_{\theta_0}$ denotes expectation with respect to the true model.
Let
\begin{align}
h\left(A\right)&=\underset{\theta\in A}{\mbox{ess~inf}}~h(\theta);\notag\\ 
J(\theta)&=h(\theta)-h(\Theta);\notag\\ 
J(A)&=\underset{\theta\in A}{\mbox{ess~inf}}~J(\theta).\notag 
\end{align}
Thus, $h(A)$ can be roughly interpreted as the minimum KL-divergence between the postulated and the true model over the set $A$. If $h(\Theta)>0$, this indicates
model misspecification. 
For $A\subset\Theta$, $h(A)>h(\Theta)$, so that $J(A)>0$.

As regards the prior, it is required to construct an appropriate sequence of sieves $\mathcal G_n$ such that $\mathcal G_n\rightarrow\Theta$ and $\pi(\mathcal G^c_n)\leq\alpha\exp(-\beta n)$,
for some $\alpha>0$. 

With the above notions, verification of (\ref{eq:equipartition}) along with several other technical conditions ensure that for any $A\subseteq\Theta$ such that $\pi(A)>0$, 
\begin{equation}
\underset{n\rightarrow\infty}{\lim}~\pi(A|\bY_n)=0,
\label{eq:post_conv1}
\end{equation}
almost surely, provided that $h(A)>h(\Theta)$.

The seven assumptions of Shalizi leading to the above result, which we denote as (S1)--(S7), are provided in Appendix \ref{subsec:assumptions_shalizi}.
In what follows, we denote almost sure convergence by ``$\stackrel{a.s.}{\longrightarrow}$", almost sure equality by ``$\stackrel{a.s.}{=}$" and weak convergence
by ``$\stackrel{w}{\longrightarrow}$".

\section{Convergence of PBF in forward problems}
\label{sec:conv_forward}

Let $\mathcal M_0$ denote the true model which is also associated with parameter $\theta\in\Theta_0$, where $\Theta_0$ is a parameter 
space containing the true parameter $\theta_0$. 
Then the following result holds.
\begin{theorem}
	\label{theorem:forward_cons2}
	Assume conditions (S1)--(S7) of Shalizi, and let the infimum of $h(\theta)$ over $\Theta$ be attained at $\tilde\theta\in\Theta$, where $\tilde\theta\neq\theta_0$.
	Also assume that $\Theta$ and $\Theta_0$ are complete separable metric spaces and that 
	for $i\geq 1$, $f(y_i|\theta,x_i,\bY^{(i-1)},\mathcal M)$ and $f(y_i|\theta,x_i,\bY^{(i-1)},\mathcal M_0)$ are bounded and continuous in $\theta$. 
	Then, 
	\begin{equation}
		\frac{1}{n}\log FPBF^{(n)}(\mathcal M,\mathcal M_0)
		=\frac{1}{n}\log\left[\frac{\prod_{i=1}^n\pi(y_i|\bY_{n,-i},\bX_n,\mathcal M)}{\prod_{i=1}^n\pi(y_i|\bY_{n,-i},\bX_n,\mathcal M_0)}\right]
		\stackrel{a.s.}{\longrightarrow} -h(\tilde\theta),
		~\mbox{as}~n\rightarrow\infty,
		\label{eq:forward_cons2}
	\end{equation}
		where, for any $\theta$,
		\begin{equation}
		h(\theta)=
		\underset{n\rightarrow\infty}{\lim}~\frac{1}{n}
			E_{\theta_0}\left\{\sum_{i=1}^n\log\left[\frac{f(y_i|\theta_0,x_i,\bY^{(i-1)},\mathcal M_0)}{f(y_i|\theta,x_i,\bY^{(i-1)},\mathcal M)}\right]\right\}.
			\label{eq:h_forward}
		\end{equation}
\end{theorem}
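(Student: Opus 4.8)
The plan is to split the two log\nobreakdash-products defining $FPBF^{(n)}(\mathcal M,\mathcal M_0)$ around the ``plug\nobreakdash-in'' predictive densities evaluated at the pseudo\nobreakdash-true parameters, and then to recognise the leftover as a relative asymptotic equipartition quantity. Write $\pi_{n,-i}(\cdot|\mathcal M)$ for the leave\nobreakdash-one\nobreakdash-out posterior $\pi(\theta|\bY_{n,-i},\bX_{n,-i},\mathcal M)$ of (\ref{eq:post_forward2}), let $\tilde\theta\in\Theta$ be the minimiser of $h$ over $\Theta$ as in the statement, and note that in the true model $\mathcal M_0$ the parameter $\theta_0\in\Theta_0$ minimises, at value $0$, the corresponding Kullback\nobreakdash-Leibler rate over $\Theta_0$. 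Then one has the algebraic identity
\begin{align}
\frac{1}{n}\log FPBF^{(n)}(\mathcal M,\mathcal M_0)
&=\frac{1}{n}\sum_{i=1}^n\log\frac{\pi(y_i|\bY_{n,-i},\bX_n,\mathcal M)}{f(y_i|\tilde\theta,x_i,\bY^{(i-1)},\mathcal M)}
 -\frac{1}{n}\sum_{i=1}^n\log\frac{\pi(y_i|\bY_{n,-i},\bX_n,\mathcal M_0)}{f(y_i|\theta_0,x_i,\bY^{(i-1)},\mathcal M_0)}\notag\\
&\qquad{}+\frac{1}{n}\sum_{i=1}^n\log\frac{f(y_i|\tilde\theta,x_i,\bY^{(i-1)},\mathcal M)}{f(y_i|\theta_0,x_i,\bY^{(i-1)},\mathcal M_0)},
\label{eq:proof_decomp}
\end{align}
and I would treat the three sums separately.

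The third sum in (\ref{eq:proof_decomp}) telescopes to $\frac{1}{n}\log R_n(\tilde\theta)$, the log\nobreakdash-likelihood ratio of model $\mathcal M$ at $\tilde\theta$ against the true model, since $\prod_{i=1}^n f(y_i|\theta_0,x_i,\bY^{(i-1)},\mathcal M_0)$ is the true joint likelihood. By the relative asymptotic equipartition property (\ref{eq:equipartition}), which holds for each $\theta\in\Theta$ under (S1)--(S7) (in particular for $\tilde\theta$), together with the identification (\ref{eq:h_forward}) of $h$, this sum converges almost surely to $-h(\tilde\theta)$. It therefore suffices to prove that each of the first two sums in (\ref{eq:proof_decomp}) tends to $0$ almost surely; I would phrase this as a lemma stating that, for model $\mathcal M$ with pseudo\nobreakdash-true parameter $\tilde\theta$, $\frac{1}{n}\sum_{i=1}^n\log\bigl[\pi(y_i|\bY_{n,-i},\bX_n,\mathcal M)/f(y_i|\tilde\theta,x_i,\bY^{(i-1)},\mathcal M)\bigr]\stackrel{a.s.}{\longrightarrow}0$, and symmetrically for $\mathcal M_0$ and $\theta_0$.

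For the lemma, fix $i$. Because $\tilde\theta$ attains the essential infimum of $h$ over $\Theta$, Shalizi's posterior convergence result (\ref{eq:post_conv1}) applied to $\pi_{n,-i}(\cdot|\mathcal M)$ --- which is simply the posterior based on the $n-1$ observations $\bY_{n,-i}$ --- gives $\pi_{n,-i}(G^c|\mathcal M)\to 0$ a.s.\ for every neighbourhood $G$ of $\tilde\theta$; since $\Theta$ is a complete separable metric space this is weak convergence of $\pi_{n,-i}(\cdot|\mathcal M)$ to the point mass at $\tilde\theta$, and combined with boundedness and continuity of $\theta\mapsto f(y_i|\theta,x_i,\bY^{(i-1)},\mathcal M)$ it yields $\pi(y_i|\bY_{n,-i},\bX_n,\mathcal M)=\int f(y_i|\theta,x_i,\bY^{(i-1)},\mathcal M)\,d\pi_{n,-i}(\theta|\mathcal M)\to f(y_i|\tilde\theta,x_i,\bY^{(i-1)},\mathcal M)$. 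To upgrade this to control of the Cesàro average I would sandwich the ratio: from boundedness, $\pi(y_i|\bY_{n,-i},\bX_n,\mathcal M)\le\sup_\theta f(y_i|\theta,x_i,\bY^{(i-1)},\mathcal M)$, and on a small neighbourhood of $\tilde\theta$ the integrand exceeds $f(y_i|\tilde\theta,x_i,\bY^{(i-1)},\mathcal M)$ only by a small additive amount, leaving a remainder of order $\pi_{n,-i}(G^c|\mathcal M)$; from below, Jensen's inequality gives $\log\pi(y_i|\bY_{n,-i},\bX_n,\mathcal M)\ge\int\log f(y_i|\theta,x_i,\bY^{(i-1)},\mathcal M)\,d\pi_{n,-i}(\theta|\mathcal M)$, which, restricted to a neighbourhood of $\tilde\theta$ and using $\pi_{n,-i}(G|\mathcal M)\to1$, is bounded below by $\log f(y_i|\tilde\theta,x_i,\bY^{(i-1)},\mathcal M)$ minus a vanishing term. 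Summing over $i$ and letting $n\to\infty$ in (\ref{eq:proof_decomp}) then yields $\frac{1}{n}\log FPBF^{(n)}(\mathcal M,\mathcal M_0)\stackrel{a.s.}{\longrightarrow}-h(\tilde\theta)$, which is (\ref{eq:forward_cons2}).

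The step I expect to be the main obstacle is precisely the passage from these pointwise\nobreakdash-in\nobreakdash-$i$ estimates to control of $\frac{1}{n}\sum_{i=1}^n$: the posteriors $\pi_{n,-i}(\cdot|\mathcal M)$ are all based on $n-1$ observations, but they are evaluated at the $n$ distinct points $y_1,\dots,y_n$, and both the localising neighbourhood $G$ and the rate of $\pi_{n,-i}(G^c|\mathcal M)\to 0$ a priori depend on $i$, so pointwise convergence alone does not force Cesàro convergence. Overcoming this needs a uniform\nobreakdash-in\nobreakdash-$i$ version of the concentration bound --- available because Shalizi's sieve condition $\pi(\mathcal G_n^c)\le\alpha e^{-\beta n}$ delivers an exponential rate for the $(n-1)$\nobreakdash-sample posteriors that is uniform over $i$ --- together with a uniform\nobreakdash-in\nobreakdash-$i$ bound on $\sup_\theta f(y_i|\theta,x_i,\bY^{(i-1)},\mathcal M)$ and a uniform modulus of continuity of $f$ in $\theta$ over the realised data points (which also keeps $f(y_i|\tilde\theta,x_i,\bY^{(i-1)},\mathcal M)$ away from $0$ along almost every data sequence), or else by directly importing the uniform convergence of cross\nobreakdash-validation posteriors established in \ctn{Chat20}. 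The argument for $\mathcal M_0$ is identical, with $\theta_0$ in place of $\tilde\theta$ and the KL\nobreakdash-rate equal to $0$.
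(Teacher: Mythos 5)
Your proposal is correct and follows essentially the same route as the paper's own proof: posterior concentration from (S1)--(S7) gives weak convergence of the leave-one-out posteriors to $\delta_{\tilde\theta}$ (resp.\ $\delta_{\theta_0}$), the Portmanteau theorem with bounded continuous $f$ replaces the cross-validation densities by the plug-in densities $f(y_i|\tilde\theta,x_i,\bY^{(i-1)},\mathcal M)$, and the asymptotic equipartition property identifies the limit as $-h(\tilde\theta)$; your three-term decomposition is an algebraic reorganisation of exactly these steps. The triangular-array issue you flag --- that pointwise-in-$i$ convergence of $\pi(y_i|\bY_{n,-i},\bX_n,\mathcal M)$ does not by itself force convergence of the Ces\`{a}ro average, since each summand depends on $n$ and the localisation rate may depend on $i$ --- is genuine, and the paper's own passage from (\ref{eq:forward_cons1}) to (\ref{eq:forward_cons3}) does not address it, so your proposed uniform-in-$i$ strengthening makes the argument, if anything, more careful than the published one.
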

\begin{proof}
By the hypotheses, (\ref{eq:post_conv1}) holds, from which it follows that for any $\epsilon>0$, 
\begin{equation}
	\underset{n\rightarrow\infty}{\lim}~\pi(\mathbb N^c_{\epsilon}|\bY_{n,-i},\bX_{n,-i},\mathcal M)=0, 
\label{eq:cons1}
\end{equation}
where $\mathbb N_{\epsilon}=\left\{\theta:h(\theta)\leq h\left(\Theta\right)+\epsilon\right\}$.

Now, by hypothesis, the infimum of $h(\theta)$ over $\Theta$ be attained at $\tilde\theta\in\Theta$, where $\tilde\theta\neq\theta_0$. 
	Then by (\ref{eq:cons1}), the posterior of $\theta$ given $\bY_{n,-i}$ and $\bX_{n,-i}$, given by (\ref{eq:post_forward2}),
concentrates around $\tilde\theta$, the minimizer of the limiting KL-divergence rate from the true distribution. 
Formally, given any neighborhood $U$ of $\tilde\theta$, the set $\mathbb N_{\epsilon}$ is contained in $U$ for sufficiently small $\epsilon$.
It follows that for any neighborhood $U$ of $\theta_0$, $\pi(U|\bY_{n,-i},\bX_{n,-i},\mathcal M)\rightarrow 1$, almost surely, as $n\rightarrow\infty$.
Since $\Theta$ is a complete, separable metric space, it follows that (see, for example, \ctn{Ghosh03}, \ctn{Ghosal17})
	\begin{equation}	
		\pi(\cdot|\bY_{n,-i},\bX_{n,-i},\mathcal M)\stackrel{w}{\longrightarrow} \delta_{\tilde\theta}(\cdot),~\mbox{almost surely, as}~n\rightarrow\infty.
		\label{eq:weak1}
	\end{equation}
	Then, due to (\ref{eq:weak1}) and the Portmanteau theorem, as $f(y_i|\theta,x_i,\bY^{(i-1)},\mathcal M)$ is bounded and continuous in $\theta$, it holds 
	using (\ref{eq:post_forward1}), that
	\begin{equation}
		\pi(y_i|\bY_{n,-i},\bX_n,\mathcal M)\stackrel{a.s.}{\longrightarrow} f(y_i|\tilde\theta,x_i,\bY^{(i-1)},\mathcal M),~\mbox{as}~n\rightarrow\infty.
		\label{eq:forward_cons1}
	\end{equation}
Now, due to (\ref{eq:forward_cons1}),
	\begin{equation}
	\frac{1}{n}\sum_{i=1}^n\log\pi(y_i|\bY_{n,-i},\bX_n,\mathcal M)\stackrel{a.s.}{\longrightarrow} 
		\underset{n\rightarrow\infty}{\lim}~\frac{1}{n}\sum_{i=1}^n\log f(y_i|\tilde\theta,x_i,\bY^{(i-1)},\mathcal M),~\mbox{as}~n\rightarrow\infty.
		\label{eq:forward_cons3}
	\end{equation}
	Also, essentially the same arguments leading to (\ref{eq:forward_cons1}) yield
	\begin{equation*}
		\pi(y_i|\bY_{n,-i},\bX_n,\mathcal M_0)\stackrel{a.s.}{\longrightarrow} f(y_i|\theta_0,x_i,\bY^{(i-1)},\mathcal M_0),~\mbox{as}~n\rightarrow\infty,
	\end{equation*}
	which ensures
	\begin{equation}
	\frac{1}{n}\sum_{i=1}^n\log\pi(y_i|\bY_{n,-i},\bX_n,\mathcal M_0)\stackrel{a.s.}{\longrightarrow} 
		\underset{n\rightarrow\infty}{\lim}~\frac{1}{n}\sum_{i=1}^n\log f(y_i|\theta_0,x_i,\bY^{(i-1)},\mathcal M_0),~\mbox{as}~n\rightarrow\infty.
		\label{eq:forward_cons4}
	\end{equation}
	From (\ref{eq:forward_cons3}) and (\ref{eq:forward_cons4}) we obtain
	\begin{equation}
		\underset{n\rightarrow\infty}{\lim}~\frac{1}{n}\log FPBF^{(n)}(\mathcal M,\mathcal M_0)\stackrel{a.s.}{=}
		\underset{n\rightarrow\infty}{\lim}~\frac{1}{n}\sum_{i=1}^n\log\left[\frac{f(y_i|\tilde\theta,x_i,\bY^{(i-1)},\mathcal M)}
		{f(y_i|\tilde\theta_0,x_i,\bY^{(i-1)},\mathcal M_0)}\right]
		\stackrel{a.s.}{=}-h(\tilde\theta),
		\label{eq:forward_cons5}
	\end{equation}
	where the rightmost step of (\ref{eq:forward_cons5}), given by (\ref{eq:h_forward}), follows due to (\ref{eq:equipartition}). 
	Hence, the result is proved.
\end{proof}

For postulated model $\mathcal M_j$, let the KL-divergence rate $h$ in (\ref{eq:S3}) be denoted by $h_j$, for $j\geq 1$.
\begin{theorem}
	\label{theorem:forward_cons2_mis}
	For models $\mathcal M_0$, $\mathcal M_1$ and $\mathcal M_2$ with complete separable parameter spaces $\Theta_0$, $\Theta_1$ and $\Theta_2$, 
	assume conditions (S1)--(S7) of Shalizi, and for $j=1,2$, let the infimum of $h_j(\theta)$ over $\Theta_j$ 
	be attained at $\tilde\theta_j\in\Theta_j$, where $\tilde\theta_j\neq\theta_0$.
	Also assume that for $i\geq 1$, $f(y_i|\theta,x_i,\bY^{(i-1)},\mathcal M_j)$; $j=1,2$, and $f(y_i|\theta,x_i,\bY^{(i-1)},\mathcal M_0)$ are bounded 
	and continuous in $\theta$. 
	Then, 
	\begin{equation}
		\frac{1}{n}\log FPBF^{(n)}(\mathcal M_1,\mathcal M_2)
		=\frac{1}{n}\log\left[\frac{\prod_{i=1}^n\pi(y_i|\bY_{n,-i},\bX_n,\mathcal M_1)}{\prod_{i=1}^n\pi(y_i|\bY_{n,-i},\bX_n,\mathcal M_2)}\right]
		\stackrel{a.s.}{\longrightarrow} -\left[h(\tilde\theta_1)-h(\tilde\theta_2)\right],
		~\mbox{as}~n\rightarrow\infty,
		\label{eq:forward_cons2_mis}
	\end{equation}
		where, for $j=1,2$, and for any $\theta$,
		\begin{equation}
		h_j(\theta)=
		\underset{n\rightarrow\infty}{\lim}~\frac{1}{n}
			E_{\theta_0}\left\{\sum_{i=1}^n\log\left[\frac{f(y_i|\theta_0,x_i,\bY^{(i-1)},\mathcal M_0)}
			{f(y_i|\theta,x_i,\bY^{(i-1)},\mathcal M_j)}\right]\right\}.
			\label{eq:h_forward_mis}
		\end{equation}
\end{theorem}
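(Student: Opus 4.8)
The plan is to obtain this as a corollary of Theorem \ref{theorem:forward_cons2} by factoring the FPBF through the true model $\mathcal M_0$. Since
\begin{equation*}
FPBF^{(n)}(\mathcal M_1,\mathcal M_2)=\frac{FPBF^{(n)}(\mathcal M_1,\mathcal M_0)}{FPBF^{(n)}(\mathcal M_2,\mathcal M_0)},
\end{equation*}
we have $\frac{1}{n}\log FPBF^{(n)}(\mathcal M_1,\mathcal M_2)=\frac{1}{n}\log FPBF^{(n)}(\mathcal M_1,\mathcal M_0)-\frac{1}{n}\log FPBF^{(n)}(\mathcal M_2,\mathcal M_0)$, so once each term on the right is shown to converge almost surely, the left-hand side converges to the difference of the two limits.

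First I would apply the argument of the proof of Theorem \ref{theorem:forward_cons2} separately to $\mathcal M_1$ and $\mathcal M_2$. Under (S1)--(S7) and the assumption that $\tilde\theta_j\neq\theta_0$ is the essential minimizer of $h_j$ over the complete separable space $\Theta_j$, the leave-one-out posterior $\pi(\cdot|\bY_{n,-i},\bX_{n,-i},\mathcal M_j)$ concentrates at $\tilde\theta_j$ and hence converges weakly to $\delta_{\tilde\theta_j}$ almost surely. Combining this with the Portmanteau theorem, the assumed boundedness and continuity of $f(y_i|\cdot,x_i,\bY^{(i-1)},\mathcal M_j)$, and representation (\ref{eq:post_forward1}), I get $\pi(y_i|\bY_{n,-i},\bX_n,\mathcal M_j)\stackrel{a.s.}{\longrightarrow}f(y_i|\tilde\theta_j,x_i,\bY^{(i-1)},\mathcal M_j)$; the identical reasoning applied to $\mathcal M_0$ gives convergence to $f(y_i|\theta_0,x_i,\bY^{(i-1)},\mathcal M_0)$, exactly as in the earlier proof.

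Next I would pass to Cesàro averages of the logarithms, as in (\ref{eq:forward_cons3})--(\ref{eq:forward_cons4}), to conclude that for $j=1,2$,
\begin{equation*}
\frac{1}{n}\log FPBF^{(n)}(\mathcal M_j,\mathcal M_0)\stackrel{a.s.}{=}\underset{n\rightarrow\infty}{\lim}~\frac{1}{n}\sum_{i=1}^n\log\left[\frac{f(y_i|\tilde\theta_j,x_i,\bY^{(i-1)},\mathcal M_j)}{f(y_i|\theta_0,x_i,\bY^{(i-1)},\mathcal M_0)}\right]\stackrel{a.s.}{=}-h_j(\tilde\theta_j),
\end{equation*}
where the final equality is the generalized asymptotic equipartition property (\ref{eq:equipartition}) for model $\mathcal M_j$ and $h_j$ is as in (\ref{eq:h_forward_mis}). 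Subtracting the $j=2$ identity from the $j=1$ identity then yields $\frac{1}{n}\log FPBF^{(n)}(\mathcal M_1,\mathcal M_2)\stackrel{a.s.}{\longrightarrow}-\left[h_1(\tilde\theta_1)-h_2(\tilde\theta_2)\right]$, which is the claimed limit.

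The proof is essentially routine given Theorem \ref{theorem:forward_cons2}; the only point that needs care is the legitimacy of splitting the limit of the difference into the difference of the limits, i.e. ruling out an $\infty-\infty$ indeterminacy. This is handled by noting that each of $h_1(\tilde\theta_1)$ and $h_2(\tilde\theta_2)$ exists by (S1)--(S7) via (\ref{eq:S3}), and that under the standing boundedness assumptions on the densities these KL-divergence rates are finite, so the subtraction of the two almost sure limits is unambiguous. Thus the main ``obstacle'' is merely bookkeeping rather than a substantive difficulty.
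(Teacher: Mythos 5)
Your proposal is correct and follows exactly the paper's own argument: decompose $\frac{1}{n}\log FPBF^{(n)}(\mathcal M_1,\mathcal M_2)$ as $\frac{1}{n}\log FPBF^{(n)}(\mathcal M_1,\mathcal M_0)-\frac{1}{n}\log FPBF^{(n)}(\mathcal M_2,\mathcal M_0)$ and apply Theorem \ref{theorem:forward_cons2} to each term. Your added remark on the finiteness of $h_1(\tilde\theta_1)$ and $h_2(\tilde\theta_2)$, which rules out an indeterminate difference of limits, is a small point of care that the paper leaves implicit.
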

\begin{proof}
	The proof follows by noting that $$\frac{1}{n}\log FPBF^{(n)}(\mathcal M_1,\mathcal M_2)=\frac{1}{n}\log FPBF^{(n)}(\mathcal M_1,\mathcal M_0)
	-\frac{1}{n}\log FPBF^{(n)}(\mathcal M_2,\mathcal M_0),$$ and then using (\ref{eq:forward_cons2}) for 
	$\frac{1}{n}\log FPBF^{(n)}(\mathcal M_1,\mathcal M_0)$ and $\frac{1}{n}\log FPBF^{(n)}(\mathcal M_2,\mathcal M_0)$.
\end{proof}

\section{Convergence results for PBF in inverse regression: first setup}
\label{sec:pbf_inverse_1st}

\begin{theorem}
	\label{theorem:inverse_cons2}
	Assume conditions (S1)--(S7) of Shalizi, and let the infimum of $h(\theta)$ over $\Theta$ be attained at $\tilde\theta\in\Theta$, where $\tilde\theta\neq\theta_0$.
	Also assume that $\Theta$ and $\Theta_0$ are complete separable metric spaces and that 
	for $i\geq 1$, $g(\bY^{(i)},\theta,\mathcal M)$ and $g(\bY^{(i)},\theta,\mathcal M_0)$ are bounded and continuous in $\theta$. 
	Then, 
	\begin{equation}
		\frac{1}{n}\log IPBF^{(n)}(\mathcal M,\mathcal M_0)
		=\frac{1}{n}\log\left[\frac{\prod_{i=1}^n\pi(y_i|\bY_{n,-i},\bX_{n,-i},\mathcal M)}{\prod_{i=1}^n\pi(y_i|\bY_{n,-i},\bX_{n,-i},\mathcal M_0)}\right]
		\stackrel{a.s.}{\longrightarrow} -h^*(\tilde\theta),
		~\mbox{as}~n\rightarrow\infty,
		\label{eq:inverse_cons2}
	\end{equation}
		where, for any $\theta$,
		\begin{equation*}
		h^*(\theta)=
		\underset{n\rightarrow\infty}{\lim}~\frac{1}{n}
			\sum_{i=1}^n\log\left[\frac{g(\bY^{(i)},\theta_0,\mathcal M_0)}{g(\bY^{(i)},\theta,\mathcal M)}\right],
		\end{equation*}
		provided that the limit exists.
\end{theorem}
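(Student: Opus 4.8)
The plan is to mirror the structure of the proof of Theorem \ref{theorem:forward_cons2}, replacing the forward conditional density $f(y_i|\theta,x_i,\bY^{(i-1)},\mathcal M)$ by its marginalized-over-$\tilde x_i$ version $g(\bY^{(i)},\theta,\mathcal M)$ given in (\ref{eq:g1}). First I would invoke hypothesis (S1)--(S7) to obtain (\ref{eq:post_conv1}), hence (\ref{eq:cons1}), so that the cross-validation posterior $\pi(\theta|\bY_{n,-i},\bX_{n,-i},\mathcal M)$ — which by (\ref{eq:post_forward2}) is structurally identical in the inverse setup — concentrates on the neighbourhoods $\mathbb N_\epsilon$ of the KL-rate minimizer $\tilde\theta$. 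Completeness and separability of $\Theta$ then upgrade this to weak convergence $\pi(\cdot|\bY_{n,-i},\bX_{n,-i},\mathcal M)\stackrel{w}{\longrightarrow}\delta_{\tilde\theta}$, exactly as in (\ref{eq:weak1}).

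Next I would apply the Portmanteau theorem using the hypothesis that $g(\bY^{(i)},\theta,\mathcal M)$ is bounded and continuous in $\theta$: since by (\ref{eq:post_inverse1}) the inverse cross-validation density is $\pi(y_i|\bY_{n,-i},\bX_{n,-i},\mathcal M)=\int_\Theta g(\bY^{(i)},\theta,\mathcal M)\,d\pi(\theta|\bY_{n,-i},\bX_{n,-i},\mathcal M)$, weak convergence of the posterior and boundedness/continuity of the integrand give $\pi(y_i|\bY_{n,-i},\bX_{n,-i},\mathcal M)\stackrel{a.s.}{\longrightarrow}g(\bY^{(i)},\tilde\theta,\mathcal M)$, and analogously $\pi(y_i|\bY_{n,-i},\bX_{n,-i},\mathcal M_0)\stackrel{a.s.}{\longrightarrow}g(\bY^{(i)},\theta_0,\mathcal M_0)$. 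Averaging the logarithms over $i=1,\ldots,n$ and passing to the limit then yields
\begin{equation*}
\frac{1}{n}\log IPBF^{(n)}(\mathcal M,\mathcal M_0)\stackrel{a.s.}{\longrightarrow}\underset{n\rightarrow\infty}{\lim}~\frac{1}{n}\sum_{i=1}^n\log\left[\frac{g(\bY^{(i)},\tilde\theta,\mathcal M)}{g(\bY^{(i)},\theta_0,\mathcal M_0)}\right]=-h^*(\tilde\theta),
\end{equation*}
which is the claim, provided the limit defining $h^*(\tilde\theta)$ exists — this is explicitly assumed in the statement.

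One delicate point I would be careful about is the interchange of the limit in $n$ and the almost-sure convergence when moving from (\ref{eq:forward_cons1})-type pointwise statements to the Cesàro average: the argument should either appeal to a dominated/uniform control on $\log\pi(y_i|\bY_{n,-i},\bX_{n,-i},\mathcal M)$ across $i$ and $n$, or follow the same (slightly informal) route used in the forward proof, treating the double limit exactly as in (\ref{eq:forward_cons3})--(\ref{eq:forward_cons5}). The main obstacle, and the one genuine difference from the forward case, is verifying that the boundedness-and-continuity hypothesis on $g$ is natural: because $g$ involves integration over $\mathcal X$ against the prior $\pi(\tilde x_i|\theta,\mathcal M)$, whose support $B_{im}(\theta)$ depends on $\theta$ through $\eta$, continuity of $g$ in $\theta$ requires continuity of both the integrand $f(y_i|\theta,\tilde x_i,\bY^{(i-1)},\mathcal M)$ and of the prior (density and mass) in $\theta$ — the latter being precisely the continuity property of $\pi(\tilde x_i|\eta)$ recorded after (\ref{eq:set1}). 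Assembling these to justify that the composite map $\theta\mapsto g(\bY^{(i)},\theta,\mathcal M)$ is bounded and continuous is where the work lies; once that is granted, the rest is a verbatim adaptation of Theorem \ref{theorem:forward_cons2}.
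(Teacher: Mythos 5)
Your proposal is correct and follows essentially the same route as the paper's own proof: posterior weak convergence to $\delta_{\tilde\theta}$ inherited from the forward case, the Portmanteau theorem applied to the bounded continuous integrand $g(\bY^{(i)},\theta,\mathcal M)$ via (\ref{eq:post_inverse1}), and Ces\`aro averaging of the logarithms for both $\mathcal M$ and $\mathcal M_0$. The additional caveats you raise (the double-limit interchange and the naturality of the continuity hypothesis on $g$) are reasonable observations but are handled in the paper exactly as in the forward proof, with continuity of $g$ simply taken as a hypothesis.
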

\begin{proof}
Since $\pi(\cdot|\bY_{n,-i},\bX_{n,-i},\mathcal M)$ remains the same as in Theorem \ref{theorem:forward_cons2}, it follows as before that

	\begin{equation*}	
		\pi(\cdot|\bY_{n,-i},\bX_{n,-i},\mathcal M)\stackrel{w}{\longrightarrow} \delta_{\tilde\theta}(\cdot),~\mbox{almost surely, as}~n\rightarrow\infty.
	\end{equation*}
	Then, since $g(y_i,\theta,\mathcal M)$ is bounded and continuous in $\theta$, 
	the above ensures in conjunction with the Portmanteau theorem using (\ref{eq:post_inverse1}), that
	\begin{equation}
		\pi(y_i|\bY_{n,-i},\bX_{n,-i},\mathcal M)\stackrel{a.s.}{\longrightarrow} g(\bY^{(i)},\tilde\theta,\mathcal M),~\mbox{as}~n\rightarrow\infty.
		\label{eq:inverse_cons1}
	\end{equation}
Hence,
	\begin{equation}
		\frac{1}{n}\sum_{i=1}^n\log\pi(y_i|\bY_{n,-i},\bX_{n,-i},\mathcal M)\stackrel{a.s.}{\longrightarrow} 
		\underset{n\rightarrow\infty}{\lim}~\frac{1}{n}\sum_{i=1}^n\log g(\bY^{(i)},\tilde\theta,\mathcal M),~\mbox{as}~n\rightarrow\infty.
		\label{eq:inverse_cons3}
	\end{equation}
Similarly,
	\begin{equation}
		\frac{1}{n}\sum_{i=1}^n\log\pi(y_i|\bY_{n,-i},\bX_{n,-i},\mathcal M_0)\stackrel{a.s.}{\longrightarrow} 
		\underset{n\rightarrow\infty}{\lim}~\frac{1}{n}\sum_{i=1}^n\log g(\bY^{(i)},\theta_0,\mathcal M_0),~\mbox{as}~n\rightarrow\infty.
		\label{eq:inverse_cons4}
	\end{equation}
	Combining (\ref{eq:inverse_cons3}) and (\ref{eq:inverse_cons4}) yields
	\begin{equation*}
		\underset{n\rightarrow\infty}{\lim}~\frac{1}{n}\log IPBF^{(n)}(\mathcal M,\mathcal M_0)\stackrel{a.s.}{=}
		\underset{n\rightarrow\infty}{\lim}~\frac{1}{n}\sum_{i=1}^n\log\left[\frac{g(\bY^{(i)},\tilde\theta,\mathcal M)}{g(\bY^{(i)},\theta_0,\mathcal M_0)}\right]
		=-h^*(\tilde\theta).
	\end{equation*}
	Hence, the result is proved.
\end{proof}
\begin{remark}
	\label{eq:h1}
	Observe that $h^*(\tilde\theta)$ in Theorem \ref{theorem:inverse_cons2} does not correspond to the KL-divergence rate given by (\ref{eq:S3}), even though
	in the forward context, Theorem \ref{theorem:forward_cons2} shows almost convergence of $\frac{1}{n}\log FPBF^{(n)}$ to $-h(\tilde\theta)$, 
	where $h(\tilde\theta)$ is the {\it bona fide} KL-divergence rate.
\end{remark}

In Theorem \ref{theorem:inverse_cons2} we have assumed that for cross-validation even in the true model $\mathcal M_0$, $x_i$ is assumed unknown, 
and that a prior has been placed on
the corresponding unknown random quantity $\tilde x_i$. If, on the other hand, $x_i$ is considered known for cross-validation in $\mathcal M_0$, 
then we we have the following theorem,
which is an appropriately modified version of Theorem \ref{theorem:inverse_cons2}.

\begin{theorem}
	\label{theorem:inverse_cons2_true}
	Assume conditions (S1)--(S7) of Shalizi for models $\mathcal M_0$ and $\mathcal M$, 
	and let the infimum of $h(\theta)$ over $\Theta$ be attained at $\tilde\theta\in\Theta$, where $\tilde\theta\neq\theta_0$.
	Also assume that $\Theta$ and $\Theta_0$ are complete separable metric spaces and that 
	for $i\geq 1$, $g(\bY^{(i)},\theta,\mathcal M)$ and $f(y_i|\theta,x_i,\bY^{(i-1)},\mathcal M_0)$ are bounded and continuous in $\theta$. 
	Then the following result holds if $x_i$ is assumed known for cross-validation with respect to $\mathcal M_0$:  
	\begin{equation}
		\frac{1}{n}\log IPBF^{(n)}(\mathcal M,\mathcal M_0)
		=\frac{1}{n}\log\left[\frac{\prod_{i=1}^n\pi(y_i|\bY_{n,-i},\bX_{n,-i},\mathcal M)}{\prod_{i=1}^n\pi(y_i|\bY_{n,-i},\bX_{n,-i},\mathcal M_0)}\right]
		\stackrel{a.s.}{\longrightarrow} -h^*(\tilde\theta),
		~\mbox{as}~n\rightarrow\infty,
		\label{eq:inverse_cons2_true}
	\end{equation}
		where, for any $\theta$,
		\begin{equation*}
		h^*(\theta)=
		\underset{n\rightarrow\infty}{\lim}~\frac{1}{n}
			\sum_{i=1}^n\log\left[\frac{f(y_i|\theta_0,x_i,\bY^{(i-1)},\mathcal M_0)}{g(\bY^{(i)},\theta,\mathcal M)}\right],
		\end{equation*}
		provided that the limit exists.
\end{theorem}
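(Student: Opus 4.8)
The plan is to follow the same route as the proof of Theorem~\ref{theorem:inverse_cons2}, changing only the treatment of the denominator, which now involves cross-validation in $\mathcal M_0$ with $x_i$ held fixed rather than marginalized over a prior on $\tilde x_i$. First I would observe that the posterior $\pi(\cdot|\bY_{n,-i},\bX_{n,-i},\mathcal M)$ of the postulated model $\mathcal M$ is \emph{exactly} the one appearing in Theorems~\ref{theorem:forward_cons2} and~\ref{theorem:inverse_cons2}, namely (\ref{eq:post_forward2}); hence conditions (S1)--(S7) together with completeness and separability of $\Theta$ give, as before, $\pi(\cdot|\bY_{n,-i},\bX_{n,-i},\mathcal M)\stackrel{w}{\longrightarrow}\delta_{\tilde\theta}$ almost surely. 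Since $g(\bY^{(i)},\theta,\mathcal M)$ is bounded and continuous in $\theta$, the Portmanteau theorem applied to the representation (\ref{eq:post_inverse1}) yields $\pi(y_i|\bY_{n,-i},\bX_{n,-i},\mathcal M)\stackrel{a.s.}{\longrightarrow} g(\bY^{(i)},\tilde\theta,\mathcal M)$, and then the same Ces\`aro-type passage used to go from (\ref{eq:forward_cons1}) to (\ref{eq:forward_cons3}) gives $\frac{1}{n}\sum_{i=1}^n\log\pi(y_i|\bY_{n,-i},\bX_{n,-i},\mathcal M)\stackrel{a.s.}{\longrightarrow}\lim_{n\to\infty}\frac{1}{n}\sum_{i=1}^n\log g(\bY^{(i)},\tilde\theta,\mathcal M)$.

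For the denominator, the key point is that when $x_i$ is treated as known for cross-validation under $\mathcal M_0$, the cross-validation density of $y_i$ reverts to the \emph{forward} form $\pi(y_i|\bY_{n,-i},\bX_{n,-i},\mathcal M_0)=\int_{\Theta_0}f(y_i|\theta,x_i,\bY^{(i-1)},\mathcal M_0)\,d\pi(\theta|\bY_{n,-i},\bX_{n,-i},\mathcal M_0)$ rather than the $g$-averaged form. Because $\mathcal M_0$ is correctly specified, $\theta_0\in\Theta_0$, so Shalizi's posterior convergence applied to $\mathcal M_0$ forces the posterior of $\theta$ under $\mathcal M_0$ (after leaving out the $i$-th pair) to concentrate at $\theta_0$, whence it converges weakly to $\delta_{\theta_0}$ almost surely; boundedness and continuity of $f(y_i|\cdot,x_i,\bY^{(i-1)},\mathcal M_0)$ plus Portmanteau then give $\pi(y_i|\bY_{n,-i},\bX_{n,-i},\mathcal M_0)\stackrel{a.s.}{\longrightarrow} f(y_i|\theta_0,x_i,\bY^{(i-1)},\mathcal M_0)$, and the same Ces\`aro argument yields $\frac{1}{n}\sum_{i=1}^n\log\pi(y_i|\bY_{n,-i},\bX_{n,-i},\mathcal M_0)\stackrel{a.s.}{\longrightarrow}\lim_{n\to\infty}\frac{1}{n}\sum_{i=1}^n\log f(y_i|\theta_0,x_i,\bY^{(i-1)},\mathcal M_0)$.

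Subtracting the two displays gives $\frac{1}{n}\log IPBF^{(n)}(\mathcal M,\mathcal M_0)\stackrel{a.s.}{\longrightarrow}\lim_{n\to\infty}\frac{1}{n}\sum_{i=1}^n\log\bigl[g(\bY^{(i)},\tilde\theta,\mathcal M)/f(y_i|\theta_0,x_i,\bY^{(i-1)},\mathcal M_0)\bigr]=-h^*(\tilde\theta)$, which is the claimed limit, valid under the standing hypothesis that this limit exists. The only step requiring genuine care — and hence the main obstacle — is the justification that pointwise a.s.\ convergence of the summands (each depending on $n$ through the left-out posterior) upgrades to convergence of the normalized sums; this is the same delicate passage already used implicitly in Theorems~\ref{theorem:forward_cons2} and~\ref{theorem:inverse_cons2}, and one handles it by invoking a uniform-in-$i$ version of (\ref{eq:post_conv1}), or, more carefully, by exploiting that the exponential rate in Shalizi's bound makes the contribution of the finitely many ``slow'' indices to the average negligible. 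Everything else is a verbatim adaptation of the argument for Theorem~\ref{theorem:inverse_cons2}.
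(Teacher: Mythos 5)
Your proposal is correct and follows essentially the same route as the paper: the numerator is handled exactly as in Theorem \ref{theorem:inverse_cons2} (weak convergence of the leave-one-out posterior to $\delta_{\tilde\theta}$ plus Portmanteau applied to the bounded continuous $g$), while the denominator is observed to revert to the forward form (\ref{eq:post_forward1}) so that (\ref{eq:forward_cons4}) applies, and the two Ces\`aro limits are then subtracted. The paper's proof is just a terser statement of this same observation.
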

\begin{proof}
	In this case, for the true model $\mathcal M_0$, the cross-validation posterior $\pi(y_i|\bY_{n,-i},\bX_{n,-i},\mathcal M_0)$ is of the same form
	as (\ref{eq:post_forward1}) and hence, (\ref{eq:forward_cons4}) holds. The rest of the proof remains the same as that of Theorem \ref{theorem:inverse_cons2}.
\end{proof}

\begin{remark}
	\label{eq:h2}
	Observe that $h^*(\tilde\theta)$ in Theorem \ref{theorem:inverse_cons2_true} 
	is a genuine KL-divergence rate. However, this is not the same as $h(\tilde\theta)$ of Theorem \ref{theorem:forward_cons2}, which is
	the KL-divergence rate between $\mathcal M$ and $\mathcal M_0$ when all the $x_i$ are known.
	Since cross-validation with all $x_i$ known can occur only in the forward regression setup, convergence rates of pseudo-Bayes factors
	in inverse regression problems can never be associated with $h$, even though 
	the conditions of Theorem \ref{theorem:inverse_cons2_true} show that $\tilde\theta$ is the minimizer of $h$.
\end{remark}

\begin{theorem}
	\label{theorem:inverse_cons2_mis}
	For models $\mathcal M_0$, $\mathcal M_1$ and $\mathcal M_2$ with complete separable parameter spaces $\Theta_0$, $\Theta_1$ and $\Theta_2$, 
	assume conditions (S1)--(S7) of Shalizi, and for $j=1,2$, let the infimum of $h_j(\theta)$ over $\Theta_j$ 
	be attained at $\tilde\theta_j\in\Theta_j$, where $\tilde\theta_j\neq\theta_0$.
	Also assume that for $i\geq 1$, $g(\bY^{(i)},\theta,\mathcal M_j)$; $j=1,2$, and $f(y_i|\theta,x_i,\bY^{(i-1)},\mathcal M_0)$ are bounded and continuous in $\theta$, 
	Then, if $x_i$ is assumed known for cross-validation with respect to $\mathcal M_0$, the following holds: 
	\begin{equation}
		\frac{1}{n}\log IPBF^{(n)}(\mathcal M_1,\mathcal M_2)
		=\frac{1}{n}\log\left[\frac{\prod_{i=1}^n\pi(y_i|\bY_{n,-i},\bX_n,\mathcal M_1)}{\prod_{i=1}^n\pi(y_i|\bY_{n,-i},\bX_n,\mathcal M_2)}\right]
		\stackrel{a.s.}{\longrightarrow} -\left[h^*_1(\tilde\theta_1)-h^*_2(\tilde\theta_2)\right],
		~\mbox{as}~n\rightarrow\infty,
		\label{eq:inverse_cons2_mis}
	\end{equation}
		where, for $j=1,2$, and for any $\theta$,
		\begin{equation}
		h^*_j(\theta)=
		\underset{n\rightarrow\infty}{\lim}~\frac{1}{n}
			\sum_{i=1}^n\log\left[\frac{f(y_i|\theta_0,x_i,\bY^{(i-1)},\mathcal M_0)}{g(\bY^{(i)},\theta,\mathcal M_j)}\right],
			\label{eq:h_inverse_mis}
		\end{equation}
		provided the limit exists.
\end{theorem}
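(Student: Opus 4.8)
The plan is to reduce the statement to Theorem~\ref{theorem:inverse_cons2_true} by writing the inverse pseudo-Bayes factor between $\mathcal M_1$ and $\mathcal M_2$ as the ratio of the two inverse pseudo-Bayes factors against the common true model $\mathcal M_0$. Since the product $\prod_{i=1}^n\pi(y_i|\bY_{n,-i},\bX_{n,-i},\mathcal M_0)$ occurs in the denominator of one factor and in the numerator of the other, it cancels, so that
\begin{equation*}
	\frac{1}{n}\log IPBF^{(n)}(\mathcal M_1,\mathcal M_2)=\frac{1}{n}\log IPBF^{(n)}(\mathcal M_1,\mathcal M_0)-\frac{1}{n}\log IPBF^{(n)}(\mathcal M_2,\mathcal M_0).
\end{equation*}
This is exactly the device used in the proof of Theorem~\ref{theorem:forward_cons2_mis}, now transported to the inverse setting in which $x_i$ is treated as known for cross-validation under $\mathcal M_0$.

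Next I would verify that, for each $j=1,2$, the pair $(\mathcal M_j,\mathcal M_0)$ fulfils all the hypotheses of Theorem~\ref{theorem:inverse_cons2_true}. The assumptions imposed here --- conditions (S1)--(S7) for $\mathcal M_0$ and $\mathcal M_j$, complete separable parameter spaces $\Theta_0$ and $\Theta_j$, the infimum of $h_j$ over $\Theta_j$ attained at some $\tilde\theta_j\neq\theta_0$, boundedness and continuity in $\theta$ of $g(\bY^{(i)},\theta,\mathcal M_j)$ and of $f(y_i|\theta,x_i,\bY^{(i-1)},\mathcal M_0)$, together with the convention that $x_i$ is known for cross-validation with respect to $\mathcal M_0$ --- are precisely the ingredients of Theorem~\ref{theorem:inverse_cons2_true} with $\mathcal M=\mathcal M_j$. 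Hence that theorem gives, for $j=1,2$,
\begin{equation*}
	\frac{1}{n}\log IPBF^{(n)}(\mathcal M_j,\mathcal M_0)\stackrel{a.s.}{\longrightarrow}-h^*_j(\tilde\theta_j),
\end{equation*}
with $h^*_j$ as in (\ref{eq:h_inverse_mis}), provided those limits exist. Subtracting the $j=2$ limit from the $j=1$ limit on the event (of probability one) on which both convergences hold yields the asserted limit $-\left[h^*_1(\tilde\theta_1)-h^*_2(\tilde\theta_2)\right]$.

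Because the argument is purely a reduction, there is no genuinely hard step; the only points needing care are bookkeeping ones. First, one must note that the set of data sequences on which both normalized log-$IPBF^{(n)}(\mathcal M_j,\mathcal M_0)$ converge is still of full measure, being a finite intersection, so the difference converges almost surely. Second --- and this is the only real caveat --- the existence of each limiting Ces\`aro average $h^*_j(\tilde\theta_j)$ must be assumed exactly as stated; it is inherited from the relative asymptotic equipartition property (\ref{eq:equipartition}) applied within $\mathcal M_j$ and from the forward-style convergence (\ref{eq:forward_cons4}) for the $\mathcal M_0$ part, but it is the existence rather than the identification of the limit that is the delicate ingredient. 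Alternatively one could bypass the decomposition and argue directly: weak convergence of $\pi(\cdot|\bY_{n,-i},\bX_{n,-i},\mathcal M_j)$ to $\delta_{\tilde\theta_j}$ via (\ref{eq:post_conv1}), the Portmanteau theorem applied to the bounded continuous map $\theta\mapsto g(\bY^{(i)},\theta,\mathcal M_j)$ to obtain $\pi(y_i|\bY_{n,-i},\bX_{n,-i},\mathcal M_j)\stackrel{a.s.}{\longrightarrow}g(\bY^{(i)},\tilde\theta_j,\mathcal M_j)$, the analogous statement (\ref{eq:forward_cons4}) for $\mathcal M_0$, and then combining the three Ces\`aro averages; but the reduction to Theorem~\ref{theorem:inverse_cons2_true} is shorter and is what I would write up.
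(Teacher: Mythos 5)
Your proposal is correct and matches the paper's own proof essentially verbatim: the paper likewise writes $\frac{1}{n}\log IPBF^{(n)}(\mathcal M_1,\mathcal M_2)$ as the difference of $\frac{1}{n}\log IPBF^{(n)}(\mathcal M_1,\mathcal M_0)$ and $\frac{1}{n}\log IPBF^{(n)}(\mathcal M_2,\mathcal M_0)$ and then applies (\ref{eq:inverse_cons2_true}) to each term. Your added remarks on the full-measure intersection and on the existence of the limits are sound but go beyond what the paper records.
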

\begin{proof}
	The proof follows by noting that $$\frac{1}{n}\log IPBF^{(n)}(\mathcal M_1,\mathcal M_2)=\frac{1}{n}\log IPBF^{(n)}(\mathcal M_1,\mathcal M_0)
	-\frac{1}{n}\log IPBF^{(n)}(\mathcal M_2,\mathcal M_0),$$ and then using (\ref{eq:inverse_cons2_true}) for 
	$\frac{1}{n}\log IPBF^{(n)}(\mathcal M_1,\mathcal M_0)$ and $\frac{1}{n}\log IPBF^{(n)}(\mathcal M_2,\mathcal M_0)$.
\end{proof}
\begin{remark}
	\label{remark:inverse1}
	Note that the result of Theorem \ref{theorem:inverse_cons2_mis} holds without the assumption that $\Theta_0$ is complete separable and 
	$f(y_i|\theta,x_i,\bY^{(i-1)},\mathcal M_0)$ is bounded and continuous in $\theta$, irrespective of whether or not $x_i$ is treated as known
	in the case of cross-validation with respect to the true model $\mathcal M_0$. 
	Indeed, assuming the rest of the conditions of Theorem \ref{theorem:inverse_cons2_mis},
	it holds that 
	\begin{equation*}
		\frac{1}{n}\log IPBF^{(n)}(\mathcal M_1,\mathcal M_2)
		=\frac{1}{n}\log\left[\frac{\prod_{i=1}^n\pi(y_i|\bY_{n,-i},\bX_{n,-i},\mathcal M_1)}{\prod_{i=1}^n\pi(y_i|\bY_{n,-i},\bX_{n,-i},\mathcal M_2)}\right]
		\stackrel{a.s.}{\longrightarrow} -h^*(\tilde\theta_1,\tilde\theta_2),
		~\mbox{as}~n\rightarrow\infty,
	\end{equation*}
		where, for any $\theta_1,\theta_2$,
		\begin{equation*}
		h^*(\theta_1,\theta_2)=
		\underset{n\rightarrow\infty}{\lim}~\frac{1}{n}
			\sum_{i=1}^n\log\left[\frac{g(\bY^{(i)},\theta_2,\mathcal M_2)}{g(\bY^{(i)},\theta_1,\mathcal M_1)}\right],
		\end{equation*}
		provided that the limit exists.
		The proof follows in the same way as in Theorem \ref{theorem:inverse_cons2} by replacing $\mathcal M$ and $\mathcal M_0$ with
		$\mathcal M_1$ and $\mathcal M_2$. 
		Note that $h^*(\tilde\theta_1,\tilde\theta_2)$ above is the same as $h^*(\tilde\theta_1)-h^*(\tilde\theta_2)$ of Theorem \ref{theorem:inverse_cons2_mis},
		but the latter is interpretable as the difference between limiting KL-divergence rates for $\mathcal M_1$ and $\mathcal M_2$, while the former does not admit
		such desirable interpretation since without the assumptions $\Theta_0$ is complete separable and $f(y_i|\theta,x_i,\bY^{(i-1)},\mathcal M_0)$ is 
		bounded and continuous in $\theta$, the convergence
	\begin{equation*}
		\frac{1}{n}\sum_{i=1}^n\log\pi(y_i|\bY_{n,-i},\bX_{n,-i},\mathcal M_0)\stackrel{a.s.}{\longrightarrow} 
		\underset{n\rightarrow\infty}{\lim}~\frac{1}{n}\sum_{i=1}^n\log f(y_i|\theta_0,x_i,\bY^{(i-1)},\mathcal M_0),~\mbox{as}~n\rightarrow\infty,
	\end{equation*}
	need not hold, even if $x_i$ is considered known for cross-validation with respect to $\mathcal M_0$.
\end{remark}

\section{Convergence results for PBF in inverse regression: second setup}
\label{sec:pbf_inverse_2nd}
In the misspecified situation, $\theta_0\notin\Theta$, and $\tilde\theta$ is the minimizer of the limiting KL-divergence rate from $\theta_0$.
If $\theta$ is thus misspecified, then as $m\rightarrow\infty$, $B_{im}(\tilde\theta)\stackrel{a.s.}{\longrightarrow}\{x^*_i\}$ for some non-random $x^*_i~(\neq x_i)$ 
depending upon both $\tilde\theta$ and $\theta_0$. In other words, the prior distribution of $\tilde x_i$ given $\tilde\theta$ and $\by_i$ concentrates around
$x^*_i$, as $m\rightarrow\infty$. 
We now state and prove our result on IPBF convergence with respect to the prior (\ref{eq:prior_x}).
\begin{theorem}
	\label{theorem:inverse_cons3}
	Assume conditions (S1)--(S7) of Shalizi. 
	Let the infimum of $h(\theta)$ over $\Theta$ be attained at $\tilde\theta\in\Theta$, where $\tilde\theta\neq\theta_0$.
	Assume that $\tilde\theta$ and $\theta_0$ are one-to-one functions.
	Also assume that $\Theta$ and $\Theta_0$ are complete separable metric spaces and that 
	for $i\geq 1$ and $k\geq 1$, $f(y_{ik}|\theta,\tilde x_i,\bY^{(i-1)}_k,\mathcal M)$ and $f(y_{ik}|\theta,\tilde x_i,\bY^{(i-1)}_k,\mathcal M_0)$ 
	are bounded and continuous in $(\theta,\tilde x_i)$.
	Then, for prior (\ref{eq:prior_x}) on $\tilde x_i$, the following holds for any $k\geq 1$: 
	\begin{equation}
		\underset{m\rightarrow\infty}{\lim}\underset{n\rightarrow\infty}{\lim}~\frac{1}{n}\log IPBF^{(n,m,k)}(\mathcal M,\mathcal M_0)
		=\underset{m\rightarrow\infty}{\lim}\underset{n\rightarrow\infty}{\lim}~
		\frac{1}{n}\log\left[\frac{\prod_{i=1}^n\pi(y_{ik}|\bY_{nm,-i},\bX_{n,-i},\mathcal M)}{\prod_{i=1}^n\pi(y_{ik}|\bY_{nm,-i},\bX_{n,-i},\mathcal M_0)}\right]
		\stackrel{a.s.}{=} -h^*(\tilde\theta),
		\label{eq:inverse_cons3_2}
	\end{equation}
		where 
		\begin{equation*}
		h^*(\tilde\theta)=
		\underset{n\rightarrow\infty}{\lim}~\frac{1}{n}
			\sum_{i=1}^n\log\left[\frac{f(y_{ik}|\theta_0,x_i,\bY^{(i-1)}_k,\mathcal M_0)}{f(y_{ik}|\tilde\theta,x^*_i,\bY^{(i-1)}_k,\mathcal M)}\right],
		\end{equation*}
		provided that the limit exists.
\end{theorem}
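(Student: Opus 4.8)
The plan is to mimic the structure of the proof of Theorem \ref{theorem:inverse_cons2}, but to handle the two limits $n\to\infty$ and $m\to\infty$ in the correct order, dealing first with $n$ (for fixed $m$) and only then with $m$. First I would fix $m$ (and $k$) and apply the posterior convergence machinery exactly as in Theorem \ref{theorem:inverse_cons2}: since the cross-validation posterior $\pi(\theta|\bY_{nm,-i},\bX_{n,-i},\mathcal M)$ has the same Gibbs/product form as (\ref{eq:post_forward2}) (now with $m$ replicates per retained covariate), conditions (S1)--(S7) give $\pi(\cdot|\bY_{nm,-i},\bX_{n,-i},\mathcal M)\stackrel{w}{\longrightarrow}\delta_{\tilde\theta}$ almost surely as $n\to\infty$. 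Here $\tilde\theta$ is the minimizer of the limiting KL-divergence rate $h$ over $\Theta$, and because the joint prior on $(\tilde x_i,\theta)$ factorizes as $\pi(\tilde x_i|\theta,\mathcal M)\pi(\theta|\mathcal M)$ (the analogue of (\ref{eq:inv1})), the marginal posterior of $\theta$ is unaffected by the prior on $\tilde x_i$.

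Next I would integrate out $\tilde x_i$ for fixed $m$. Define, in analogy with (\ref{eq:g1}),
\begin{equation*}
	g_m(\bY^{(i)}_k,\theta,\mathcal M)=\int_{\mathcal X}f(y_{ik}|\theta,\tilde x_i,\bY^{(i-1)}_k,\mathcal M)\,d\pi(\tilde x_i|\theta,\mathcal M),
\end{equation*}
where $\pi(\tilde x_i|\theta,\mathcal M)$ is the uniform prior on $B_{im}(\theta)$ of (\ref{eq:set1}). By the boundedness and joint continuity of $f(y_{ik}|\theta,\tilde x_i,\bY^{(i-1)}_k,\mathcal M)$ in $(\theta,\tilde x_i)$, and the fact (cited from \ctn{Chat20}) that probabilities under $\pi(\tilde x_i|\eta)$ are continuous in $\eta$, the map $\theta\mapsto g_m(\bY^{(i)}_k,\theta,\mathcal M)$ is bounded and continuous; hence the Portmanteau theorem applied to the weak convergence above yields $\pi(y_{ik}|\bY_{nm,-i},\bX_{n,-i},\mathcal M)\stackrel{a.s.}{\longrightarrow}g_m(\bY^{(i)}_k,\tilde\theta,\mathcal M)$ as $n\to\infty$, and similarly with $\mathcal M_0$ and $\theta_0$. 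Averaging the logarithms as in (\ref{eq:inverse_cons3})--(\ref{eq:inverse_cons4}) gives
\begin{equation*}
	\lim_{n\to\infty}\frac{1}{n}\log IPBF^{(n,m,k)}(\mathcal M,\mathcal M_0)\stackrel{a.s.}{=}\lim_{n\to\infty}\frac{1}{n}\sum_{i=1}^n\log\left[\frac{g_m(\bY^{(i)}_k,\tilde\theta,\mathcal M)}{g_m(\bY^{(i)}_k,\theta_0,\mathcal M_0)}\right].
\end{equation*}

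Finally I would let $m\to\infty$. This is where the hypothesis stated in the paragraph preceding the theorem enters: since $\tilde\theta$ is misspecified (so $\bar y_i\to H(\eta_{\theta_0}(x_i))$ while the band width $cs_i/\sqrt m\to 0$), the set $B_{im}(\tilde\theta)$ shrinks almost surely to a single point $\{x^*_i\}$, with $x^*_i$ non-random given the law of $\bar y_i$ and depending on both $\tilde\theta$ and $\theta_0$ through the equation $H(\eta_{\tilde\theta}(x^*_i))=H(\eta_{\theta_0}(x_i))$; the one-to-one assumption on $\tilde\theta$ and $\theta_0$ guarantees $x^*_i$ is well-defined. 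Consequently the uniform prior $\pi(\tilde x_i|\tilde\theta,\mathcal M)$ converges weakly to $\delta_{x^*_i}$, and by the continuity of $f$ in $\tilde x_i$ and bounded convergence, $g_m(\bY^{(i)}_k,\tilde\theta,\mathcal M)\to f(y_{ik}|\tilde\theta,x^*_i,\bY^{(i-1)}_k,\mathcal M)$ as $m\to\infty$; for $\mathcal M_0$ the band $B_{im}(\theta_0)$ collapses to $\{x_i\}$ itself, so $g_m(\bY^{(i)}_k,\theta_0,\mathcal M_0)\to f(y_{ik}|\theta_0,x_i,\bY^{(i-1)}_k,\mathcal M_0)$. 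Substituting these into the displayed limit and interchanging the $m$-limit with the Cesàro average (justified by uniform boundedness of the log-ratios, which I would need to assume or derive from the boundedness hypotheses, together with the stipulated existence of the limit defining $h^*$) produces exactly $-h^*(\tilde\theta)$ as claimed.

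The main obstacle is the last step: justifying the interchange of $\lim_{m\to\infty}$ with $\lim_{n\to\infty}\frac1n\sum_{i=1}^n$, and simultaneously controlling the $a.s.$ fluctuations of $B_{im}(\theta)$ as a function of $i$ uniformly enough that the per-$i$ convergence $g_m\to f$ survives the averaging. One must rule out the possibility that, for a vanishing-but-not-summable fraction of indices $i$, the band $B_{im}(\tilde\theta)$ fails to be small (because $s_i$ is atypically large), which would spoil the Cesàro limit; this is presumably where the boundedness and continuity assumptions on $f$, plus a law-of-large-numbers type control on $(\bar y_i,s_i)$ inherited from \ctn{Chat20}, are doing the real work. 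I would isolate this as a lemma on the almost sure behaviour of $m^{-1/2}$-bands and invoke it here rather than re-deriving it.
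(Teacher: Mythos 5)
Your proposal is correct and reaches the stated limit, but it is organized differently from the paper's proof. The paper never introduces the fixed-$m$ marginal $g_m$: instead it establishes joint weak convergence of the cross-validation posterior of the pair $(\tilde x_i,\theta)$ to $\delta_{(x^*_i,\tilde\theta)}$ under the iterated limit, by writing $\pi(\tilde x_i\in U_i,\theta\in V|\bY_{nm,-i},\bX_{n,-i},\mathcal M)=\int_V\pi(\tilde x_i\in U_i|\theta,\mathcal M)\,d\pi(\theta|\bY_{n,-i},\bX_{n,-i},\mathcal M)$, sending $n\to\infty$ via Portmanteau (using continuity of $\theta\mapsto\pi(\tilde x_i\in U_i|\theta,\mathcal M)$ from Lemma 4.1 of \ctn{Chat20}) and then $m\to\infty$ via $B_{im}(\tilde\theta)\to\{x^*_i\}$; a single application of the Portmanteau theorem to the bounded continuous function $(\theta,\tilde x_i)\mapsto f(y_{ik}|\theta,\tilde x_i,\bY^{(i-1)}_k,\mathcal M)$ then gives $\pi(y_{ik}|\bY_{nm,-i},\bX_{n,-i},\mathcal M)\to f(y_{ik}|\tilde\theta,x^*_i,\bY^{(i-1)}_k,\mathcal M)$ directly. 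Your route --- integrate out $\tilde x_i$ at fixed $m$ to get $g_m$, apply Portmanteau in $\theta$ alone, and only then send $m\to\infty$ --- respects the stated order of limits just as faithfully, at the cost of having to argue continuity of $g_m$ in $\theta$ and to justify passing $\lim_{m\to\infty}$ inside the Ces\`{a}ro average $\lim_{n\to\infty}\frac1n\sum_i$. The paper's packaging buys a cleaner final step (one Portmanteau application, no $g_m$), but note that the interchange you flag as the ``main obstacle'' is not actually resolved by the paper either: its equation (\ref{eq:eq6}) performs exactly the same passage from the per-$i$ convergence (\ref{eq:eq5}) to the Ces\`{a}ro limit without further justification, just as the forward-case proof does in going from (\ref{eq:forward_cons1}) to (\ref{eq:forward_cons3}), and the final limit is simply assumed to exist in the definition of $h^*$. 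So your proposal is sound at the paper's own level of rigor, and your explicit identification of the uniform-in-$i$ control needed on $B_{im}(\tilde\theta)$ is a point the paper leaves implicit rather than one you have missed.
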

\begin{proof}
	It follows from (\ref{eq:inv1}) that 
	$\pi(\tilde x_i,\theta|\bY_{nm,-i},\bX_{n,-i},\mathcal M)=\pi(\tilde x_i|\theta,\mathcal M)\pi(\theta|\bY_{nm,-i},\bX_{n,-i},\mathcal M)$.
	Hence, letting $U_i\times V$ be any neighborhood of $(x^*_i,\tilde\theta)$, we have
	\begin{equation}
		\pi(\tilde x_i\in U_i,\theta\in V|\bY_{nm,-i},\bX_{n,-i},\mathcal M)
		=\int_V\pi(\tilde x_i\in U_i|\theta,\mathcal M)d\pi(\theta|\bY_{n,-i},\bX_{n,-i},\mathcal M).
		\label{eq:eq1}
	\end{equation}
	Since $\pi(\cdot|\bY_{n,-i},\bX_{n,-i},\mathcal M)\stackrel{w}{\longrightarrow}\delta_{\tilde\theta}(\cdot)$, as $n\rightarrow\infty$, for any $m\geq 1$,
	and since $\pi(\tilde x_i\in U_i|\theta,\mathcal M)$ is bounded (since it is a probability) and continuous in $\theta$ by Lemma 4.1 of \ctn{Chat20},
	by the Portmanteau theorem it follows from (\ref{eq:eq1}) that for $m\geq 1$,
	\begin{equation}
		\pi(\tilde x_i\in U_i,\theta\in V|\bY_{nm,-i},\bX_{n,-i},\mathcal M)\stackrel{a.s.}{\longrightarrow} \pi(\tilde x_i\in U_i|\tilde\theta,\mathcal M),
		~\mbox{as}~n\rightarrow\infty.
		\label{eq:eq2}
	\end{equation}
	Now, since $B_{im}(\tilde\theta)\stackrel{a.s.}{\longrightarrow}\{x^*_i\}$ as $m\rightarrow\infty$ since $\tilde\theta$ is one-to-one, 
	it follows that there exists $m_0\geq 1$ such that for $m\geq m_0$,
	$B_{im}(\tilde\theta)\subset U_i$. Hence, 
	\begin{equation}
		\pi(\tilde x_i\in U_i|\tilde\theta,\mathcal M)\stackrel{a.s.}{\longrightarrow}1,~\mbox{as}~m\rightarrow\infty.
		\label{eq:eq3}
	\end{equation}
	Combining (\ref{eq:eq2}) and (\ref{eq:eq3}) yields
	\begin{equation}
		\pi(\tilde x_i\in U_i,\theta\in V|\bY_{nm,-i},\bX_{n,-i},\mathcal M)\stackrel{a.s.}{\longrightarrow} 1,
		~\mbox{as}~m\rightarrow\infty,~n\rightarrow\infty.
		\label{eq:eq4}
	\end{equation}
	From (\ref{eq:eq4}) it follows thanks to complete separability of $\mathcal X$ and $\Theta$, that
	\begin{equation}
		\pi(\cdot|\bY_{nm,-i},\bX_{n,-i},\mathcal M)\stackrel{w}{\longrightarrow} \delta_{(x^*_i,\tilde\theta)}(\cdot),
		~\mbox{as}~m\rightarrow\infty,~n\rightarrow\infty.
		\label{eq:eq4_new}
	\end{equation}
Since 
	$\pi(y_{ik}|\bY_{nm,-i},\bX_{n,-i},\mathcal M)
	=\int_{\mathcal X}\int_{\Theta} f(y_{ik}|\theta,\tilde x_i,\bY^{(i-1)},\mathcal M)d\pi(\tilde x_i,\theta|\bY_{nm,-i},\bX_{n,-i},\mathcal M)$,
	and $f(y_{ik}|\theta,\tilde x_i,\bY^{(i-1)},\mathcal M)$ is bounded and continuous in $(\theta,\tilde x_i)$, it follows using (\ref{eq:eq4_new}) 
	and the Portmanteau theorem, that
	\begin{equation}
		\pi(y_{ik}|\bY_{nm,-i},\bX_{n,-i},\mathcal M)\stackrel{a.s.}{\longrightarrow}f(y_{ik}|\tilde\theta,x^*_i,\bY^{(i-1)}_k,\mathcal M),
		~\mbox{as}~m\rightarrow\infty,~n\rightarrow\infty.
		\label{eq:eq5}
	\end{equation}
Hence,
	\begin{equation}
		\frac{1}{n}\sum_{i=1}^n\log\pi(y_{ik}|\bY_{nm,-i},\bX_{n,-i},\mathcal M)\stackrel{a.s.}{\longrightarrow} 
		\underset{n\rightarrow\infty}{\lim}~\frac{1}{n}\sum_{i=1}^n\log f(y_{ik}|\tilde\theta,x^*_i,\bY^{(i-1)}_k,\mathcal M),
		~\mbox{as}~m\rightarrow\infty,~n\rightarrow\infty.
		\label{eq:eq6}
	\end{equation}
In the same way,
	\begin{equation}
		\frac{1}{n}\sum_{i=1}^n\log\pi(y_{ik}|\bY_{nm,-i},\bX_{n,-i},\mathcal M_0)\stackrel{a.s.}{\longrightarrow} 
		\underset{n\rightarrow\infty}{\lim}~\frac{1}{n}\sum_{i=1}^n\log f(y_{ik}|\theta_0,x_i,\bY^{(i-1)}_k,\mathcal M_0),
		~\mbox{as}~m\rightarrow\infty,~n\rightarrow\infty.
		\label{eq:eq7}
	\end{equation}
	Combining (\ref{eq:eq6}) and (\ref{eq:eq7}) yields
	\begin{equation*}
		\underset{m\rightarrow\infty}{\lim}\underset{n\rightarrow\infty}{\lim}~\frac{1}{n}\log IPBF^{(n,m,k)}(\mathcal M,\mathcal M_0)\stackrel{a.s.}{=}
		\underset{n\rightarrow\infty}{\lim}~\frac{1}{n}\sum_{i=1}^n\log
		\left[\frac{f(y_{ik}|\tilde\theta,x^*_i,\bY^{(i-1)}_k,\mathcal M)}{f(y_{ik}|\theta_0,x_i,\bY^{(i-1)}_k,\mathcal M_0)}\right]
		=-h^*(\tilde\theta),
	\end{equation*}
	thereby proving the result.
\end{proof}
\begin{remark}
	\label{remark:inv_cv1}
	Theorem \ref{theorem:inverse_cons3} assumes that for $\mathcal M_0$, cross-validation is carried out assuming $x_i$ is unknown. However,
	as is clear from the proof, the same result continues to hold even if $x_i$ is treated as known.
\end{remark}

\begin{theorem}
	\label{theorem:inverse_cons3_mis}
	For models $\mathcal M_0$, $\mathcal M_1$ and $\mathcal M_2$ with complete separable parameter spaces $\Theta_0$, $\Theta_1$ and $\Theta_2$, 
	assume conditions (S1)--(S7) of Shalizi and for $j=1,2$, let the infimum of $h_j(\theta)$ over $\Theta_j$ 
	be attained at $\tilde\theta_j\in\Theta_j$, where $\tilde\theta_j\neq\theta_0$. Consider the prior (\ref{eq:prior_x}) on $\tilde x_i$
	and let $B_{im}(\tilde\theta_j)\stackrel{a.s.}{\longrightarrow}\{x^*_{ij}\}$, for $j=1,2$.
	Also assume that for $i\geq 1$ and $k\geq 1$, $f(y_{ik}|\theta,\tilde x_i,\bY^{(i-1)},\mathcal M_j)$; $j=1,2$, 
	and $f(y_{ik}|\theta,\tilde x_i,\bY^{(i-1)},\mathcal M_0)$ are bounded and continuous in 
	$(\theta,\tilde x_i)$, in addition to the conditions that $\theta_0$ and $\tilde\theta_j$; $j=1,2$, are one-to-one. 
	Then, the following holds for any $k\geq 1$: 
	\begin{align}
		&\underset{m\rightarrow\infty}{\lim}\underset{n\rightarrow\infty}{\lim}~\frac{1}{n}\log IPBF^{(n,m,k)}(\mathcal M_1,\mathcal M_2)\notag\\
		&=\underset{m\rightarrow\infty}{\lim}\underset{n\rightarrow\infty}{\lim}~
		\frac{1}{n}\log\left[\frac{\prod_{i=1}^n\pi(y_{ik}|\bY_{nm,-i},\bX_{n,-i},\mathcal M_1)}{\prod_{i=1}^n\pi(y_{ik}|\bY_{nm,-i},\bX_{n,-i},\mathcal M_2)}\right]
		\stackrel{a.s.}{=} -\left[h^*_1(\tilde\theta_1)-h^*_2(\tilde\theta_2)\right],
		\label{eq:inverse_cons2_mis2}
	\end{align}
		where, for $j=1,2$, and for any $\theta$,
		\begin{equation}
		h^*_j(\theta)=
		\underset{n\rightarrow\infty}{\lim}~\frac{1}{n}
			\sum_{i=1}^n\log\left[\frac{f(y_{ik}|\theta_0,x_i,\bY^{(i-1)}_k,\mathcal M_0)}{f(y_{ik}|\theta,x^*_{ij},\bY^{(i-1)}_k,\mathcal M_j)}\right],
			\label{eq:h_inverse3_mis}
		\end{equation}
		provided the limit exists.
\end{theorem}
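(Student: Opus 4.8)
The plan is to reduce this to Theorem~\ref{theorem:inverse_cons3} by the same telescoping device used in Theorems~\ref{theorem:forward_cons2_mis} and~\ref{theorem:inverse_cons2_mis}. For every finite $n$, $m$ and every $k\geq 1$, the definition (\ref{eq:pdf_inverse2}) gives the exact identity
\begin{equation*}
\frac{1}{n}\log IPBF^{(n,m,k)}(\mathcal M_1,\mathcal M_2)
=\frac{1}{n}\log IPBF^{(n,m,k)}(\mathcal M_1,\mathcal M_0)
-\frac{1}{n}\log IPBF^{(n,m,k)}(\mathcal M_2,\mathcal M_0),
\end{equation*}
since the product $\prod_{i=1}^n\pi(y_{ik}|\bY_{nm,-i},\bX_{n,-i},\mathcal M_0)$ occurs in the numerator of one factor and the denominator of the other. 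I would then apply Theorem~\ref{theorem:inverse_cons3} separately to $IPBF^{(n,m,k)}(\mathcal M_1,\mathcal M_0)$ and to $IPBF^{(n,m,k)}(\mathcal M_2,\mathcal M_0)$: the hypotheses assumed here are exactly what each application needs, because $f(y_{ik}|\theta,\tilde x_i,\bY^{(i-1)}_k,\mathcal M_j)$ is bounded and continuous in $(\theta,\tilde x_i)$, $\Theta_j$ is complete separable, $\tilde\theta_j$ and $\theta_0$ are one-to-one, and $B_{im}(\tilde\theta_j)\stackrel{a.s.}{\longrightarrow}\{x^*_{ij}\}$, while the conditions on $\mathcal M_0$ (boundedness, continuity, completeness of $\Theta_0$) are shared across both applications.

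Taking the iterated limit $\lim_{m\to\infty}\lim_{n\to\infty}$ of the displayed identity and invoking Theorem~\ref{theorem:inverse_cons3} on each summand then yields
\begin{equation*}
\underset{m\rightarrow\infty}{\lim}\underset{n\rightarrow\infty}{\lim}~\frac{1}{n}\log IPBF^{(n,m,k)}(\mathcal M_1,\mathcal M_2)
\stackrel{a.s.}{=}-h^*_1(\tilde\theta_1)-\bigl(-h^*_2(\tilde\theta_2)\bigr)
=-\bigl[h^*_1(\tilde\theta_1)-h^*_2(\tilde\theta_2)\bigr],
\end{equation*}
with $h^*_j$ as in (\ref{eq:h_inverse3_mis}); in the subtraction the common term $\underset{n\rightarrow\infty}{\lim}~\frac{1}{n}\sum_{i=1}^n\log f(y_{ik}|\theta_0,x_i,\bY^{(i-1)}_k,\mathcal M_0)$ cancels, which is precisely why the $x_i$-known-versus-unknown ambiguity for $\mathcal M_0$ (Remark~\ref{remark:inv_cv1}) plays no role. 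An equivalent, more self-contained route, mirroring Remark~\ref{remark:inverse1}, would skip the decomposition and instead establish directly, via (\ref{eq:inv1}), the joint weak convergences $\pi(\cdot|\bY_{nm,-i},\bX_{n,-i},\mathcal M_j)\stackrel{w}{\longrightarrow}\delta_{(x^*_{ij},\tilde\theta_j)}(\cdot)$ as $n\to\infty$ then $m\to\infty$ for $j=1,2$, apply the Portmanteau theorem to obtain $\pi(y_{ik}|\bY_{nm,-i},\bX_{n,-i},\mathcal M_j)\stackrel{a.s.}{\longrightarrow}f(y_{ik}|\tilde\theta_j,x^*_{ij},\bY^{(i-1)}_k,\mathcal M_j)$, and then average the logarithms and subtract.

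The only point needing care in either route is that the $\mathcal M_0$ cross-validation density limit must be shown to exist on its own before it can be cancelled; this is guaranteed by the assumed boundedness and continuity of $f(y_{ik}|\theta,\tilde x_i,\bY^{(i-1)}_k,\mathcal M_0)$ together with completeness of $\Theta_0$, which is what the analog of (\ref{eq:eq7}) uses. I expect this bookkeeping of iterated limits, rather than any genuinely new estimate, to be the only mild obstacle, since the substantive analytic work, namely the concentration of the joint cross-validation posterior of $(\tilde x_i,\theta)$ at $(x^*_i,\tilde\theta)$ via Lemma~4.1 of \ctn{Chat20} and the behaviour of $B_{im}(\tilde\theta)$ as $m\to\infty$, has already been carried out in Theorem~\ref{theorem:inverse_cons3}.
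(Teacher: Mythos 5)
Your proof is correct and follows essentially the same route as the paper: the exact telescoping identity $\frac{1}{n}\log IPBF^{(n,m,k)}(\mathcal M_1,\mathcal M_2)=\frac{1}{n}\log IPBF^{(n,m,k)}(\mathcal M_1,\mathcal M_0)-\frac{1}{n}\log IPBF^{(n,m,k)}(\mathcal M_2,\mathcal M_0)$ followed by two applications of Theorem \ref{theorem:inverse_cons3} is precisely the paper's argument. Your additional remarks on the cancellation of the $\mathcal M_0$ term and the alternative direct route via Remark \ref{remark:inverse2} are consistent with the paper but not needed.
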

\begin{proof}
	The proof follows by noting that $$\frac{1}{n}\log IPBF^{(n,m,k)}(\mathcal M_1,\mathcal M_2)=\frac{1}{n}\log IPBF^{(n,m,k)}(\mathcal M_1,\mathcal M_0)
	-\frac{1}{n}\log IPBF^{(n,m,k)}(\mathcal M_2,\mathcal M_0),$$ and then using (\ref{eq:inverse_cons3_2}) for 
	$\frac{1}{n}\log IPBF^{(n,m,k)}(\mathcal M_1,\mathcal M_0)$ and $\frac{1}{n}\log IPBF^{(n,m,k)}(\mathcal M_2,\mathcal M_0)$.
\end{proof}
\begin{remark}
	\label{remark:inverse2}
	As in Remark \ref{remark:inverse1} note that the result of Theorem \ref{theorem:inverse_cons3_mis} holds without the assumption that 
	$\Theta_0$ is complete separable and $f(y_{ik}|\theta,\tilde x_i,\bY^{(i-1)},\mathcal M_0)$ is bounded and continuous in $(\theta,\tilde x_i)$ for $k\geq 1$, 
	irrespective of whether or not $x_i$ is treated as known for cross-validation with respect to $\mathcal M_0$. 
	In this case, assuming the rest of the conditions of Theorem \ref{theorem:inverse_cons3_mis},
	it holds for any $k\geq 1$, that 
	\begin{align*}
		&\underset{m\rightarrow\infty}{\lim}\underset{n\rightarrow\infty}{\lim}~\frac{1}{n}\log IPBF^{(n,m,k)}(\mathcal M_1,\mathcal M_2)\\
		&=\underset{m\rightarrow\infty}{\lim}\underset{n\rightarrow\infty}{\lim}~
		\frac{1}{n}\log\left[\frac{\prod_{i=1}^n\pi(y_{ik}|\bY_{nm,-i},\bX_{nm,-i},\mathcal M_1)}{\prod_{i=1}^n\pi(y_{ik}|\bY_{nm,-i},\bX_{nm,-i},\mathcal M_2)}\right]
		\stackrel{a.s.}{=} -h^*(\tilde\theta_1,\tilde\theta_2),
	\end{align*}
		where, for any $\theta_1,\theta_2$,
		\begin{equation*}
		h^*(\theta_1,\theta_2)=
		\underset{n\rightarrow\infty}{\lim}~\frac{1}{n}
			\sum_{i=1}^n\log\left[\frac{f(y_{ik}|\theta_2,x^*_{i2},\bY^{(i-1)}_k,\mathcal M_2)}{f(y_{ik}|\theta_1,x^*_{i1},\bY^{(i-1)}_k,\mathcal M_1)}\right],
		\end{equation*}
		provided that the limit exists.
%
		As in Remark \ref{remark:inverse1}, again $h^*(\tilde\theta_1,\tilde\theta_2)$ above is the same as 
		$h^*(\tilde\theta_1)-h^*(\tilde\theta_2)$ of Theorem \ref{theorem:inverse_cons3_mis}, although, unlike the latter,
		the former meed not be interpretable as the difference between limiting KL-divergence rates for $\mathcal M_1$ and $\mathcal M_2$.
\end{remark}

\section{Illustrations of PBF convergence in forward regression problems}
\label{sec:illustrations_forward}

\subsection{Forward linear regression model}
\label{subsec:forward1}

Let 
\begin{equation}
\mathcal M_1: y_i=\alpha+\beta x_i+\epsilon_i;~i=1,\ldots,n,
	\label{eq:M_forward1}
\end{equation}
where $\epsilon_i\sim N\left(0,\sigma^2_{\epsilon}\right)$ independently, for $i=1,\ldots,n$. 
Here $\theta=(\alpha,\beta,\sigma^2_{\epsilon})$ is the unknown set of parameters.
Let the parameter space be $\Theta=\mathbb R\times\mathbb R\times\mathbb R^+$. Clearly, $\Theta$ is complete and separable.

Also let
\begin{equation}
	\mathcal M_0: y_i=\eta_0(x_i)+\epsilon_i;~i=1,\ldots,n,
	\label{eq:M_forward2}
\end{equation}
where $\eta_0(x)$ is the true, non-linear function of $x$, which is also continuous, and $\epsilon_i\sim N\left(0,\sigma^2_{0}\right)$ independently, for $i=1,\ldots,n$.
In this 

Let us assume that $\mathcal X$, the covariate space, is compact, under both $\mathcal M_1$ and $\mathcal M_0$.

\subsubsection{Verification of the assumptions}
\label{subsubsec:verification_forward1}
From (\ref{eq:M_forward1}) it is clear that $f(y_i|\theta,x_i,\bY^{(i-1)},\mathcal M_1)=f(y_i|\theta,x_i,\mathcal M_1)$ is bounded and continuous in $\theta$, 
and the true model $f(y_i|x_i,\bY^{(i-1)},\mathcal M_0)=f(y_i|x_i,\mathcal M_0)$ is devoid
of any parameters. Consequently, in this case, $\pi(y_i|\bY_{n,-i},\bX_n,\mathcal M_0)\equiv f(y_i|x_i,\mathcal M_0)$.

We are now left to verify the seven assumptions of Shalizi.
First note from the forms of (\ref{eq:M_forward1}) and (\ref{eq:M_forward2}) 
that measurability of $R_n(\theta)$ clearly holds, so that the first assumption of Shalizi, namely, (S1) is satisfied.

Now,
\begin{align}
	&\frac{1}{n}\log \prod_{i=1}^nf(y_i|\theta,x_i,\mathcal M_1)=-\frac{1}{2}\log 2\pi\sigma^2_{\epsilon}-\frac{1}{2\sigma^2_{\epsilon}n}\sum_{i=1}^n(y_i-\eta_0(x_i))^2
	-\frac{1}{2\sigma^2_{\epsilon}n}\sum_{i=1}^n(\eta_0(x_i)-\alpha-\beta x_i)^2\notag\\
	&\qquad\qquad-\frac{1}{\sigma^2_{\epsilon}n}\sum_{i=1}^n(y_i-\eta_0(x_i))(\eta_0(x_i)-\alpha-\beta x_i).
	\label{eq:log1}
\end{align}
In (\ref{eq:log1}), 
\begin{equation}
	\frac{1}{n}\sum_{i=1}^n(y_i-\eta_0(x_i))^2\stackrel{a.s.}{\longrightarrow}\sigma^2_0,~\mbox{as}~n\rightarrow\infty,
	\label{eq:log2}
\end{equation}
and letting $|\mathcal X|$ denote the Lebesgue measure of the compact space $\mathcal X$,
\begin{equation}
	\frac{1}{n}\sum_{i=1}^n(\eta_0(x_i)-\alpha-\beta x_i)^2\rightarrow|\mathcal X|^{-1}\int_{\mathcal X}(\eta_0(x)-\alpha-\beta x)^2dx,~\mbox{as}~n\rightarrow\infty,
	\label{eq:log3}
\end{equation}
since the former is a Riemann sum. Also, letting $E_0$ and $V_0$ denote the mean and variance under model $\mathcal M_0$, we see that for all $n\geq 1$, 
\begin{equation}
	E_0\left[\frac{1}{n}\sum_{i=1}^n(y_i-\eta_0(x_i))(\eta_0(x_i)-\alpha-\beta x_i)\right]=0,
	\label{eq:E0}
\end{equation}
and
\begin{equation}
	\sum_{i=1}^{\infty}\frac{V_0\left[(y_i-\eta_0(x_i))(\eta_0(x_i)-\alpha-\beta x_i)\right]}{i^2}
	\leq\sigma^2_0~\underset{x\in\mathcal X}{\sup}~\left(\eta_0(x)-\alpha-\beta x\right)^2\sum_{i=1}^{\infty}\frac{1}{i^2}<\infty.
	\label{eq:V0}
\end{equation}
From (\ref{eq:E0}) and (\ref{eq:V0}), it follows from Kolmogorov's strong law of large numbers for independent but non-identical random variables,
\begin{equation}
	\frac{1}{n}\sum_{i=1}^n(y_i-\eta_0(x_i))(\eta_0(x_i)-\alpha-\beta x_i)\stackrel{a.s.}{\longrightarrow} 0,~\mbox{as}~n\rightarrow\infty.
	\label{eq:slln1}
\end{equation}
Applying (\ref{eq:log2}), (\ref{eq:log3}) and (\ref{eq:slln1}) to (\ref{eq:log1}) yields
\begin{equation}
	\frac{1}{n}\log \prod_{i=1}^nf(y_i|\theta,x_i,\mathcal M_1)\stackrel{a.s.}{\longrightarrow}-\frac{1}{2}\log 2\pi\sigma^2_{\epsilon}
	-\frac{|\mathcal X|^{-1}}{2\sigma^2_{\epsilon}}\int_{\mathcal X}(\eta_0(x)-\alpha-\beta x)^2dx,~\mbox{as}~n\rightarrow\infty.
	\label{eq:M_limit}
\end{equation}
Now observe that for the true model $\mathcal M_0$,
\begin{equation}
	\frac{1}{n}\log \prod_{i=1}^nf(y_i|x_i,\mathcal M_0)=-\frac{1}{2}\log 2\pi\sigma^2_0-\frac{1}{2\sigma^2_0n}\sum_{i=1}^n(y_i-\eta_0(x_i))^2
	\stackrel{a.s.}{\longrightarrow}-\frac{1}{2}\log 2\pi\sigma^2_0-\frac{1}{2},~\mbox{as}~n\rightarrow\infty.
	\label{eq:M0_limit}
\end{equation}
From (\ref{eq:M_limit}) and (\ref{eq:M0_limit}) we have, for $\theta\in\Theta$,
\begin{equation*}
	\frac{1}{n}\log R_n(\theta)\stackrel{a.s.}{\longrightarrow}-h_1(\theta), 
\end{equation*}
where
\begin{equation}
	h_1(\theta)=\frac{1}{2}\log\left(\frac{\sigma^2_{\epsilon}}{\sigma^2_0}\right)+\frac{\sigma^2_0}{2\sigma^2_{\epsilon}}
	+\frac{|\mathcal X|^{-1}}{2\sigma^2_{\epsilon}}\int_{\mathcal X}(\eta_0(x)-\alpha-\beta x)^2dx-\frac{1}{2}.
	\label{eq:h_linear_reg1}
\end{equation}
Hence, (S3) of Shalizi holds.

It is easy to see by taking the limits of the expectations of $\frac{1}{n}\log \prod_{i=1}^nf(y_i|\theta,x_i,\mathcal M_1)$ and
$\frac{1}{n}\log \prod_{i=1}^nf(y_i|x_i,\mathcal M_0)$, that the following also holds:
\begin{equation*}
\underset{n\rightarrow\infty}{\lim}~\frac{1}{n}E_0\left[\log R_n(\theta)\right]=-h_1(\theta).
\end{equation*}
In other words, (S2) holds.

Note that $h_1(\theta)<\infty$ almost surely if under the priors for $\alpha,\beta,\sigma^2_{\epsilon}$, $|\alpha|<\infty$, $|\beta|<\infty$ and
$0<\sigma^2_{\epsilon}<\infty$, almost surely.
Hence, (S4) holds.

Let 
\begin{equation}
	\mathcal G_n=\left\{\theta\in\Theta:|\alpha|\leq \exp\left(\gamma n\right),|\beta|\leq \exp\left(\gamma n\right),
	\sigma^{-2}_{\epsilon}\leq \exp\left(\gamma n\right)\right\},
	\label{eq:Gn}
\end{equation}
where $\gamma>2h\left(\Theta\right)$. Then $\mathcal G_n\uparrow\Theta$, as $n\rightarrow\infty$. 

Let us assume that the prior for $\left(\alpha,\beta,\sigma^{-2}_{\epsilon}\right)$ is such that the prior expectations 
$E(|\alpha|)$, $E(|\beta|)$ and $E(\sigma^{-2}_{\epsilon})$ are finite.
Then under such priors, using Markov's inequality, the probabilities $P\left(|\alpha|>\exp\left(\gamma n\right)\right)$, $P\left(|\beta|>\exp\left(\gamma n\right)\right)$ and 
$P\left(\sigma^{-2}_{\epsilon}>\exp\left(\gamma n\right)\right)$ are bounded above as follows:
\begin{align}
	&P\left(|\alpha|>\exp\left(\gamma n\right)\right)<E\left(|\alpha|\right)\exp\left(-\gamma n\right);
	\label{eq:alpha_prob}\\
	&P\left(|\beta|>\exp\left(\gamma n\right)\right)<E\left(|\beta|\right)\exp\left(-\gamma n\right);
	\label{eq:beta_prob}\\
	&P\left(\sigma^{-2}_{\epsilon}>\exp\left(\gamma n\right)\right)<E\left(\sigma^{-2}_{\epsilon}\right)\exp\left(-\gamma n\right).
	\label{eq:sigma_prob}
\end{align}
From (\ref{eq:Gn}) and the inequalities (\ref{eq:alpha_prob}), (\ref{eq:beta_prob}) and (\ref{eq:sigma_prob}) it follows that
\begin{align}
	\pi(\mathcal G_n)&\geq1-\left(P(|\alpha|>\exp\left(\gamma n\right))+P(|\beta|>\exp\left(\gamma n\right))
	+P(\sigma^{-2}_{\epsilon}>\exp\left(\gamma n\right))\right)\notag\\
	&\geq 1-\left(E\left(|\alpha|\right)+E\left(|\beta|\right)+E\left(\sigma^{-2}_{\epsilon}\right)\right)\exp\left(-\gamma n\right).
	\label{eq:prob_Gn}
\end{align}
Thus, (S5)(1) holds.

The differential of $\frac{1}{n}\log R_n(\theta)$ is continuous in $\theta$, and since $\mathcal X$ is compact, it is easy to see that
the differential is almost surely bounded on any compact subset $G$ of $\Theta$, as $n\rightarrow\infty$. That is, 
$\frac{1}{n}\log R_n(\theta)$ is almost surely Lipschitz, hence, equicontinuous on $G$. Since $\frac{1}{n}\log R_n(\theta)$ almost surely converges to $-h_1(\theta)$
pointwise, as $n\rightarrow\infty$, it holds due to the stochastic Ascoli lemma that
	\begin{equation}
		\underset{n\rightarrow\infty}{\lim}\underset{\theta\in G}{\sup}~\left|\frac{1}{n}\log R_n(\theta)+h_1(\theta)\right|=0,~\mbox{almost surely}.
		\label{eq:sa1}
	\end{equation}
Since for any $n\geq 1$, $\mathcal G_n$ is compact, (S5)(2) holds.

Since $h_1(\theta)$ is continuous in $\theta$, $\mathcal G_n$ is compact and $h\left(\mathcal G_n\right)$ is non-increasing in $n$, (S5)(3) holds.
Also, for any set $A$ such that $\pi(A)>0$, since $\mathcal G_n\cap A$ increases to $A$, it follows due to continuity of $h_1(\theta)$ that 
$h\left(\mathcal G_n\cap A\right)$ decreases to $h_1(A)$, so that (S7) holds.

Regarding verification of (S6), observe that the aim of assumption (S6) is to ensure that (see the proof of Lemma 7 of \ctn{Shalizi09}) 
for every $\varepsilon>0$ and for all $n$ sufficiently large,
\begin{equation*}
\frac{1}{n}\log\int_{\mathcal G_n}R_n(\btheta)d\pi(\btheta)\leq -h\left(\mathcal G_n\right)+\varepsilon,~\mbox{almost surely}.
\end{equation*}
Since $h\left(\mathcal G_n\right)\rightarrow h\left(\bTheta\right)$ as $n\rightarrow\infty$, 
it is enough to verify that 
for every $\varepsilon>0$ and for all $n$ sufficiently large,
\begin{equation*}
\frac{1}{n}\log\int_{\mathcal G_n}R_n(\btheta)d\pi(\btheta)\leq -h\left(\bTheta\right)+\varepsilon,~\mbox{almost surely}.
\end{equation*}
In other words, it is sufficient to verify that
	\begin{equation}
		\underset{n\rightarrow\infty}{\limsup}~\frac{1}{n}\log\int_{\mathcal G_n}R_n(\theta)\pi(\theta)d\theta\leq -h\left(\Theta\right),~\mbox{almost surely}.
		\label{eq:shalizi_Rn}
	\end{equation}
Theorem \ref{theorem:shalizi_Rn} stated and proved in Appendix \ref{sec:s6} provides sufficient conditions for (\ref{eq:shalizi_Rn}) to hold in general with 
proper priors on the parameters.
We now make use of Theorem \ref{theorem:shalizi_Rn} of Appendix \ref{sec:s6} to validate (S6) of Shalizi. 
For any function $g(x)$ on $\mathcal X$, let us consider the notation 
\begin{equation}
	E_X\left[g(X)\right]=|\mathcal X|^{-1}\int_{\mathcal X}g(x)dx.
	\label{eq:EX}
\end{equation}
Note that (\ref{eq:EX}) is indeed the expectation of $g(X)$ with respect to the uniform distribution on the compact set $\mathcal X$.

Now observe that $h_1(\theta)$ is uniquely minimized by
\begin{align}
	&\tilde\beta = \frac{E_X\left[(X-E_X(X))(\eta_0(X)-E(\eta_0(X)))\right]}{E_X(X-E_X(X))^2};\label{eq:beta1}\\
	&\tilde\alpha = E_X(\eta_0(X))-\tilde\beta E_X(X);\label{eq:alpha1}\\
	&\tilde\sigma^2_{\epsilon}=\sigma^2_0+E_X\left(\eta_0(X)-\tilde\alpha-\tilde\beta X\right)^2.\label{eq:sigma1}
\end{align}
Now, letting $\bar x_n=\frac{\sum_{i=1}^nx_i}{n}$, $\bar y_n=\frac{\sum_{i=1}^ny_i}{n}$ and 
$\bar\eta_{0n}=\frac{\sum_{i=1}^n\eta_0(x_i)}{n}$, we see that $\frac{1}{n}\log R_n(\theta)$ is minimized at
\begin{align}
	\tilde \beta^*_n &=
	\frac{\sum_{i=1}^n(y_i-\bar y_n)(x_i-\bar x_n)}{\sum_{i=1}^n(x_i-\bar x_n)^2};\label{eq:beta2}\\
	\tilde\alpha^*_n&=\bar y_n-\tilde\beta^*_n\bar x_n;\label{eq:alpha2}\\
	\tilde {\sigma^*}^2_n&=\frac{1}{n}\left[\sum_{i=1}^n(y_i-\eta_0(x_i))^2+\sum_{i=1}^n\left(\eta_0(x_i)-\tilde\alpha^*_n-\tilde\beta^*_nx_i\right)^2\right.\notag\\
	&\left.\qquad+2\sum_{i=1}^n(y_i-\eta_0(x_i))\left(\eta_0(x_i)-\tilde\alpha^*_n-\tilde\beta^*_n x_i\right)\right].
	\label{eq:sigma2}
\end{align}
Using Kolmogorov's strong law of large numbers and Riemann sum convergence, we see that 
\begin{equation}
\tilde\beta^*_n\stackrel{a.s.}{\longrightarrow}\tilde\beta, 
	\label{eq:beta3}
\end{equation}
where $\tilde\beta$ is given by (\ref{eq:beta1}). 

By (\ref{eq:beta3}), and since $\bar y_n\stackrel{a.s.}{\longrightarrow}E_X(\eta_0(X))$, $\bar x_n\rightarrow E_X(X)$, it follows that
\begin{equation}
\tilde\alpha^*_n\stackrel{a.s.}{\longrightarrow}\tilde\alpha, 
	\label{eq:alpha3}
\end{equation}
where $\tilde\alpha$ is given by (\ref{eq:alpha1}). 

For the convergence of $\tilde {\sigma^*}^2_n$ given by (\ref{eq:sigma2}), first observe that the first term on the right hand side of (\ref{eq:sigma2})
converges almost surely to $\sigma^2_0$. The $i$-th term of the second term on the right hand side converges to $(\eta_0(x_i)-\tilde\alpha-\tilde\beta x_i)^2$ 
almost surely, so that
the second term converges to $E_X(\eta_0(X)-\tilde\alpha-\tilde\beta X)^2$. The $i$-th term of the third term on the right hand side converges almost surely to
$2(y_i-\eta_0(x_i))(\eta_0(x_i)-\tilde\alpha-\tilde\beta x_i)$, so that the third term converges to zero almost surely due to (\ref{eq:slln1}). It follows that
\begin{equation}
	\tilde {\sigma^*}^2_n\stackrel{a.s.}{\longrightarrow}\tilde\sigma^2_{\epsilon}, 
	\label{eq:sigma3}
\end{equation}
where $\tilde\sigma^2_{\epsilon}$ is given by (\ref{eq:sigma1}). 
Combining (\ref{eq:beta3}), (\ref{eq:alpha3}) and (\ref{eq:sigma3}) yields
\begin{equation}
	\tilde\theta^*_n=\left(\tilde\alpha^*_n,\tilde\beta^*_n,\tilde {\sigma^*}^2_n\right)\stackrel{a.s.}{\longrightarrow}
	\left(\tilde\alpha,\tilde\beta,\tilde\sigma^2_{\epsilon}\right)=\tilde\theta,~\mbox{as}~n\rightarrow\infty.
	\label{eq:theta_conv1}
\end{equation}

In other words, we have shown that conditions (i) and (ii) of Theorem \ref{theorem:shalizi_Rn} hold. Since we have already shown 
pointwise almost sure convergence of $\frac{1}{n}\log R_n(\theta)$ to $-h_1(\theta)$ in the context of verifying (S3) and
stochastic equicontinuity of
$\frac{1}{n}\log R_n(\theta)$ on compact subsets of $\Theta$ in the context of verifying (S5)(2), all the conditions of Theorem \ref{theorem:shalizi_Rn}
go through with proper prior for $\theta$.
Hence (\ref{eq:shalizi_Rn}), and consequently, (S6), holds.

With these, it is seen that the conditions of Theorem \ref{theorem:forward_cons2} are satisfied, which leads to the following specialized version
of the theorem:
\begin{theorem}
	\label{theorem:linreg}
	Consider the linear regression model $\mathcal M_1$ given by (\ref{eq:M_forward1}) and the true, non-linear model $\mathcal M_0$ given by
	(\ref{eq:M_forward2}). Assume the parameter space $\Theta$ associated with model $\mathcal M_1$ be $\mathbb R\times\mathbb R\times\mathbb R^+$, and
	let the covariate space $\mathcal X$ be compact. Then (\ref{eq:forward_cons2}) holds for $\frac{1}{n}\log FPBF^{(n)}(\mathcal M_1,\mathcal M_0)$, 
	where for $\theta\in\Theta$, $h(\theta)=h_1(\theta)$ is given by
	(\ref{eq:h_linear_reg1}), and $\tilde\theta=\left(\tilde\alpha,\tilde\beta,\tilde\sigma^2_{\epsilon}\right)$, where $\tilde\alpha$, $\tilde\beta$
	and $\tilde\sigma^2_{\epsilon}$ are given by (\ref{eq:alpha1}), (\ref{eq:beta1}) and (\ref{eq:sigma1}), respectively.
\end{theorem}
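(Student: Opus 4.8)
\textbf{Proof proposal for Theorem \ref{theorem:linreg}.}
The plan is to obtain Theorem \ref{theorem:linreg} as a direct specialisation of Theorem \ref{theorem:forward_cons2} applied to the pair $(\mathcal M_1,\mathcal M_0)$ of (\ref{eq:M_forward1})--(\ref{eq:M_forward2}), so the whole task reduces to checking that this model pair meets every hypothesis of that theorem. First I would dispose of the structural conditions: $\Theta=\mathbb R\times\mathbb R\times\mathbb R^{+}$ is a complete separable metric space, and since the true model $\mathcal M_0$ carries no free parameter the analogous requirement on $\Theta_0$ is vacuous (take $\Theta_0$ a singleton); the Gaussian density $f(y_i\mid\theta,x_i,\mathcal M_1)$ is continuous in $\theta=(\alpha,\beta,\sigma^2_{\epsilon})$ and, on any compact neighbourhood of the limit point $\tilde\theta$ — which has $\tilde\sigma^2_{\epsilon}>0$ by (\ref{eq:sigma1}) — it is bounded, which is all the Portmanteau step inside the proof of Theorem \ref{theorem:forward_cons2} actually uses; and $f(y_i\mid x_i,\mathcal M_0)$ is parameter-free, hence trivially bounded and continuous in $\theta$. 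Note also that misspecification is automatic here, since $\eta_0$ is nonlinear, so $\theta_0\notin\Theta$ and in particular $\tilde\theta\neq\theta_0$.

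The core of the argument is the verification of Shalizi's assumptions (S1)--(S7), which I would carry out in the order already laid out in the preceding subsection. (S1) is immediate from the explicit Gaussian form of $R_n(\theta)$. For (S3) I would split $\frac1n\log\prod_{i=1}^{n} f(y_i\mid\theta,x_i,\mathcal M_1)$ into a variance term handled by the strong law, a squared-bias term that is a Riemann sum over the compact covariate space $\mathcal X$, and a cross term annihilated by Kolmogorov's SLLN for independent, non-identically distributed summands, yielding $\frac1n\log R_n(\theta)\stackrel{a.s.}{\longrightarrow}-h_1(\theta)$ with $h_1$ as in (\ref{eq:h_linear_reg1}); (S2) then follows by taking $E_0$-expectations of the same decomposition. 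Assumption (S4) holds as soon as the priors keep $|\alpha|,|\beta|,\sigma^2_{\epsilon}$ and $\sigma^{-2}_{\epsilon}$ almost surely finite. For (S5) I would exhibit the sieves $\mathcal G_n$ of (\ref{eq:Gn}), bound $\pi(\mathcal G_n^c)$ by Markov's inequality using finite prior moments, and obtain equicontinuity on compacta from the boundedness of the differential of $\frac1n\log R_n(\theta)$ (again using compactness of $\mathcal X$) together with the stochastic Ascoli lemma. Finally (S7) follows from continuity of $h_1$ and $\mathcal G_n\uparrow\Theta$.

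The delicate point — and the step I expect to be the main obstacle — is (S6), namely $\limsup_n \frac1n\log\int_{\mathcal G_n}R_n(\theta)\,d\pi(\theta)\le -h(\Theta)$ almost surely. I would discharge this through Theorem \ref{theorem:shalizi_Rn} of Appendix \ref{sec:s6}, whose hypotheses demand: uniqueness of the minimiser $\tilde\theta$ of $h_1$ (read off from (\ref{eq:beta1})--(\ref{eq:sigma1}), using that $E_X(X-E_X X)^2>0$ and hence $\tilde\beta,\tilde\alpha,\tilde\sigma^2_{\epsilon}$ are uniquely determined), almost sure convergence of the maximiser $\tilde\theta^{*}_n$ of $\frac1n\log R_n$ to $\tilde\theta$, stochastic equicontinuity on compact sets, pointwise a.s.\ convergence, and propriety of the prior. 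The genuine computation here is $\tilde\theta^{*}_n\to\tilde\theta$: the least-squares-type expressions (\ref{eq:beta2})--(\ref{eq:sigma2}) must be shown to converge componentwise by combining Kolmogorov's SLLN with Riemann-sum convergence on the compact $\mathcal X$, which is exactly (\ref{eq:theta_conv1}). With (S6) in hand, all hypotheses of Theorem \ref{theorem:forward_cons2} are satisfied, so its conclusion (\ref{eq:forward_cons2}) applies verbatim, giving $\frac1n\log FPBF^{(n)}(\mathcal M_1,\mathcal M_0)\stackrel{a.s.}{\longrightarrow}-h_1(\tilde\theta)$ with $\tilde\theta=(\tilde\alpha,\tilde\beta,\tilde\sigma^2_{\epsilon})$ as in (\ref{eq:alpha1}), (\ref{eq:beta1}), (\ref{eq:sigma1}), which is the assertion of the theorem. $\qquad\blacksquare$
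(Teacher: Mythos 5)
Your proposal is correct and follows essentially the same route as the paper: it specialises Theorem \ref{theorem:forward_cons2}, verifies (S1)--(S7) via the same variance/Riemann-sum/cross-term decomposition with Kolmogorov's SLLN, uses the sieves (\ref{eq:Gn}) with Markov's inequality and the stochastic Ascoli lemma, and discharges (S6) through Theorem \ref{theorem:shalizi_Rn} by showing the least-squares maximisers (\ref{eq:beta2})--(\ref{eq:sigma2}) converge almost surely to the unique minimiser $\tilde\theta$. Your observation that boundedness of the Gaussian density need only hold near $\tilde\theta$ (where $\tilde\sigma^2_{\epsilon}>0$) is a small but sensible refinement of the paper's blanket boundedness claim.
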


\subsection{Forward quadratic regression model}
\label{subsec:quadreg}
Now consider the following model on quadratic regression which may be regarded as a competitor to linear regression:
\begin{equation}
\mathcal M_2: y_i=\alpha+\beta_1 x_i+\beta_2 x^2_i+\epsilon_i;~i=1,\ldots,n,
	\label{eq:M_forward3}
\end{equation}
where $\epsilon_i\sim N\left(0,\sigma^2_{\epsilon}\right)$ independently, for $i=1,\ldots,n$. 
Here $\theta=(\alpha,\beta_1,\beta_2,\sigma^2_{\epsilon})$ is the unknown set of parameters, and the parameter space is 
$\Theta=\mathbb R\times\mathbb R\times\mathbb R\times\mathbb R^+$. 

In this case,
\begin{equation*}
	\frac{1}{n}\log R_n(\theta)\stackrel{a.s.}{\longrightarrow}-h_2(\theta), 
\end{equation*}
where
\begin{equation}
	h_2(\theta)=\frac{1}{2}\log\left(\frac{\sigma^2_{\epsilon}}{\sigma^2_0}\right)+\frac{\sigma^2_0}{2\sigma^2_{\epsilon}}
	+\frac{|\mathcal X|^{-1}}{2\sigma^2_{\epsilon}}\int_{\mathcal X}(\eta_0(x)-\alpha-\beta_1 x-\beta_2 x^2)^2dx-\frac{1}{2}.
	\label{eq:h_quad_reg1}
\end{equation}
It is easy to see that $h_2(\theta)$ is uniquely minimized at $\tilde\vartheta=(\tilde\alpha,\tilde\beta_1,\tilde\beta_2)$, given by
\begin{equation}
\tilde\vartheta=A^{-1}b, 
	\label{eq:ls_quad_true1}
\end{equation}
	where
\begin{equation}
	A=\left(\begin{array}{ccc}1 & E_X(X) & E_X(X^2)\\
		E_X(X) & E_X(X^2) & E_X(X^3)\\
		E_X(X^2) & E_X(X^3) & E_X(X^4)\end{array}\right)~\mbox{and}~b=\left(\begin{array}{c}E_X(\eta_0(X))\\ E_X(X\eta_0(X))\\ E_X(X^2\eta_0(X))\end{array}\right),
			\label{eq:A_b}
\end{equation}
and
	\begin{equation}
		\tilde\sigma^2_{\epsilon}=\sigma^2_0+E_X\left(\eta_0(X)-\tilde\alpha-\tilde\beta_1 X-\tilde\beta_1 X^2\right)^2.
		\label{eq:sigma_quad}
	\end{equation}
	That $A$ in (\ref{eq:A_b}) is invertible, will be shown shortly.

	The maximizer of $\frac{1}{n}\log R_n(\theta)$ here is given by the least squares estimators 
	$\tilde\vartheta^*_n=(\tilde\alpha^*_n,\tilde\beta^*_{1n},\tilde\beta^*_{2n})$
	given by 
	\begin{equation}
	\tilde\vartheta^*_n=A^{-1}_nb_n, 
		\label{eq:ls_quad1}
	\end{equation}
		where 
\begin{equation}
	A_n=n^{-1}\left(\begin{array}{ccc}n & \sum_{i=1}^nx_i & \sum_{i=1}^nx^2_i\\
		\sum_{i=1}^nx_i & \sum_{i=1}^nx^2_i & \sum_{i=1}^nx^3_i\\
	\sum_{i=1}^nx^2_i & \sum_{i=1}^nx^3_i & \sum_{i=1}^nx^4_i\end{array}\right)
		~\mbox{and}~b_n=n^{-1}\left(\begin{array}{c}\sum_{i=1}^n\eta_0(x_i)\\ \sum_{i=1}^nx_i\eta_0(x_i)\\ 
		\sum_{i=1}^nx^2_i\eta_0(x_i)\end{array}\right),
			\label{eq:A_b_n}
\end{equation}
and
	\begin{align}
		&\tilde {\sigma^*}^2_n=\frac{1}{n}\left[\sum_{i=1}^n(y_i-\eta_0(x_i))^2+\sum_{i=1}^n\left(\eta_0(x_i)-\tilde\alpha^*_n
		-\tilde\beta^*_{1n}x_i-\tilde\beta^*_{2n} x^2_i\right)^2\right.\notag\\
		&\qquad\left.\qquad+2\sum_{i=1}^n(y_i-\eta_0(x_i))\left(\eta_0(x_i)-\tilde\alpha^*_n-\tilde\beta^*_{1n} x_i-\tilde\beta^*_{2n} x^2_i\right)\right].
	\label{eq:sigma2_quad}
	\end{align}
	Now note that $A_n$ in (\ref{eq:A_b_n}) corresponds to the so-called Vandermonde design matrix (see, for example, \ctn{Macon58}) 
	associated with the least squares quadratic regression.
	The design matrix if of full rank if all the $x_i$ are distinct, which we assume. Hence, for all $n\geq 3$, $A_n$ is invertible, which makes
	the least squares estimators $\tilde\vartheta^*_n$, given by (\ref{eq:ls_quad1}), well-defined, for all $n\geq 3$. Now observe that by Riemann sum convergence,
	\begin{align}
		&A_n\stackrel{a.s.}{\longrightarrow}A,~\mbox{as}~n\rightarrow\infty,~\mbox{and}\label{eq:A_conv1}\\
		&b_n\stackrel{a.s.}{\longrightarrow}b,~\mbox{as}~n\rightarrow\infty.\label{eq:b_conv1}
	\end{align}
	Since $A_n$ is invertible for every $n\geq 3$, $A$ must also be invertible, since (\ref{eq:A_conv1}) holds. Hence, $\tilde\vartheta$ given by
	(\ref{eq:ls_quad_true1}), is well-defined.

	Now, thanks to (\ref{eq:A_conv1}) and (\ref{eq:b_conv1}), we have
	\begin{equation*}
		\tilde\vartheta^*_n\stackrel{a.s.}{\longrightarrow}\tilde\vartheta,~\mbox{as}~n\rightarrow\infty,
	\end{equation*}
	and also in the same way as for model $\mathcal M_1$, here also,
	\begin{equation*}
		\tilde {\sigma^*}^2_n\stackrel{a.s.}{\longrightarrow}\tilde\sigma^2_{\epsilon},~\mbox{as}~n\rightarrow\infty.
	\end{equation*}
	In other words,
	\begin{equation*}
		\tilde\theta^*_n\stackrel{a.s.}{\longrightarrow}\tilde\theta,~\mbox{as}~n\rightarrow\infty,
	\end{equation*}
	even for model $\mathcal M_2$.
	
	For this quadratic regression model, let 
\begin{equation*}
	\mathcal G_n=\left\{\theta\in\Theta:|\alpha|\leq \exp\left(\gamma n\right),|\beta_1|\leq \exp\left(\gamma n\right),|\beta_2|\leq \exp\left(\gamma n\right),
	\sigma^{-2}_{\epsilon}\leq \exp\left(\gamma n\right)\right\},
\end{equation*}
where $\gamma>2h\left(\Theta\right)$. Then $\mathcal G_n\uparrow\Theta$, as $n\rightarrow\infty$, and the rest of the assumptions of Shalizi are easily seen to be
satisfied. The condition of boundedness and continuity of $f(y_i|\theta,x_i,\mathcal M_2)$ are also clearly satisfied.

	We summarize our results on FPBF consistency in favour of $\mathcal M_0$ 
	when the data is modeled by $\mathcal M_2$ as follows.

\begin{theorem}
	\label{theorem:quadreg1}
	Consider the quadratic regression model $\mathcal M_2$ given by (\ref{eq:M_forward3}) and the true, non-linear model $\mathcal M_0$ given by
	(\ref{eq:M_forward2}). Assume the parameter space $\Theta$ associated with model $\mathcal M_2$ be $\mathbb R\times\mathbb R\times\mathbb R\times\mathbb R^+$, and
	let the covariate space $\mathcal X$ be compact. Also assume that $x_i;i\geq 1$ are all distinct. 
	Then (\ref{eq:forward_cons2}) holds for $\frac{1}{n}\log FPBF^{(n)}(\mathcal M_2,\mathcal M_0)$, 
	where for $\theta\in\Theta$, $h(\theta)=h_2(\theta)$ is given by
	(\ref{eq:h_quad_reg1}), and $\tilde\theta=\left(\tilde\alpha,\tilde\beta_1,\tilde\beta_2,\tilde\sigma^2_{\epsilon}\right)$, where $\tilde\alpha$, $\tilde\beta_1$,
	$\tilde\beta_2$ and $\tilde\sigma^2_{\epsilon}$ are given by (\ref{eq:ls_quad_true1}) and (\ref{eq:sigma_quad}).
\end{theorem}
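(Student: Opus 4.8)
The plan is to read Theorem~\ref{theorem:quadreg1} as nothing more than the specialization of Theorem~\ref{theorem:forward_cons2} to $\mathcal M=\mathcal M_2$ and the true model $\mathcal M_0$, so that the whole job is to verify the hypotheses of that theorem in the present quadratic-regression instance. Two of those hypotheses are immediate: $\Theta=\mathbb R\times\mathbb R\times\mathbb R\times\mathbb R^+$ is an open subset of a Euclidean space, hence a complete separable metric space (and $\Theta_0$ is trivially so, carrying no free parameter), while $f(y_i|\theta,x_i,\mathcal M_2)$ is the Gaussian density with mean $\alpha+\beta_1x_i+\beta_2x_i^2$ and variance $\sigma^2_\epsilon$ and $f(y_i|x_i,\mathcal M_0)$ is a parameter-free Gaussian, so both are bounded and continuous in $\theta$ exactly as in the linear case treated in Section~\ref{subsubsec:verification_forward1}. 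The substance is therefore the verification of Shalizi's (S1)--(S7) and the identification of the unique minimizer $\tilde\theta$ of $h_2$ with $\tilde\theta\neq\theta_0$.

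First I would transcribe the (S1)--(S5) and (S7) verifications from the linear model, the only change being the replacement of the linear predictor $\alpha+\beta x$ by $\alpha+\beta_1x+\beta_2x^2$ and the appearance of the covariate moments $E_X(X^k)$ for $k\le 4$. Concretely: (S1) holds because $R_n(\theta)$ is explicit; (S2)--(S3) follow by decomposing $\frac1n\log\prod_{i}f(y_i|\theta,x_i,\mathcal M_2)$ into a noise term (converging a.s.\ to $\sigma^2_0$ by the SLLN), a deterministic bias term (a Riemann sum converging to $|\mathcal X|^{-1}\int_{\mathcal X}(\eta_0(x)-\alpha-\beta_1x-\beta_2x^2)^2dx$), and a cross term (mean zero, with summable scaled variances, hence $\to 0$ a.s.\ by Kolmogorov's SLLN), giving $\frac1n\log R_n(\theta)\stackrel{a.s.}{\longrightarrow}-h_2(\theta)$ and the same limit in mean; (S4) holds whenever the prior puts finite first moments on $\alpha,\beta_1,\beta_2$ and $\sigma^{-2}_\epsilon$, since then $h_2(\theta)<\infty$ a.s.; (S5)(1) holds for the sieve $\mathcal G_n$ displayed in Section~\ref{subsec:quadreg} by Markov's inequality applied coordinatewise; (S5)(2) holds because the gradient of $\frac1n\log R_n(\theta)$ is continuous and, $\mathcal X$ being compact, a.s.\ bounded on compact subsets of $\Theta$, so $\frac1n\log R_n$ is a.s.\ Lipschitz there and the stochastic Ascoli lemma upgrades pointwise to locally uniform convergence to $-h_2$; and (S5)(3), (S7) follow from continuity of $h_2$ together with $\mathcal G_n\uparrow\Theta$ and $\mathcal G_n\cap A\uparrow A$.

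The heart of the matter is (S6), which I would handle through Theorem~\ref{theorem:shalizi_Rn} of Appendix~\ref{sec:s6}; its conditions (iii)--(v) (stochastic equicontinuity, pointwise a.s.\ limit, propriety of the prior) are already available, so what must be shown are (i) that $h_2$ has a unique minimizer $\tilde\theta$ on $\Theta$ and (ii) that the maximizer $\tilde\theta^*_n$ of $\frac1n\log R_n$ converges a.s.\ to $\tilde\theta$. Minimizing $h_2$ over $(\alpha,\beta_1,\beta_2)$ is an $L^2(\mathcal X)$-projection of $\eta_0$ onto quadratics whose normal equations are $A\tilde\vartheta=b$ with $A,b$ as in (\ref{eq:A_b}), so uniqueness reduces to invertibility of $A$. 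This is exactly where the hypothesis that the $x_i$ be distinct enters: the finite-sample matrix $A_n$ of (\ref{eq:A_b_n}) is a normalized Vandermonde-type Gram matrix, which has full rank once $n\ge 3$ and the $x_i$ are distinct, and since $A_n\stackrel{a.s.}{\longrightarrow}A$ (Riemann sums) and invertibility passes to the limit, $A$ is invertible; hence $\tilde\vartheta=A^{-1}b$ is well defined and, with $\tilde\sigma^2_\epsilon$ from (\ref{eq:sigma_quad}), $\tilde\theta$ is the unique minimizer. For (ii), $\tilde\vartheta^*_n=A_n^{-1}b_n\stackrel{a.s.}{\longrightarrow}A^{-1}b=\tilde\vartheta$ by continuity of inversion at the invertible $A$, and $\tilde{\sigma^*}^2_n\stackrel{a.s.}{\longrightarrow}\tilde\sigma^2_\epsilon$ by the same three-term decomposition as in the linear case, the cross term vanishing by the SLLN as in (\ref{eq:slln1}). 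This closes (S6), hence (\ref{eq:shalizi_Rn}).

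Finally, since $\eta_0$ is genuinely non-linear (not a quadratic polynomial) and continuous on the compact $\mathcal X$, its best quadratic approximant leaves a positive residual, so $h_2(\tilde\theta)>0$ and $\tilde\theta\neq\theta_0$; all hypotheses of Theorem~\ref{theorem:forward_cons2} being met, (\ref{eq:forward_cons2}) holds with $h=h_2$ and $\tilde\theta=(\tilde\alpha,\tilde\beta_1,\tilde\beta_2,\tilde\sigma^2_\epsilon)$ as in (\ref{eq:ls_quad_true1}) and (\ref{eq:sigma_quad}), which is the assertion. The one place I expect genuine work rather than transcription is establishing non-degeneracy of the limiting moment matrix $A$ --- that is, that the quadratic design does not collapse in the limit --- which is precisely what the ``distinct $x_i$'' assumption buys via the Vandermonde structure; everything else is the linear-regression verification carried through with one additional polynomial degree.
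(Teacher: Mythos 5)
Your proposal is correct and follows essentially the same route as the paper: transcribe the linear-model verification of (S1)--(S7) with the quadratic predictor, use the Vandermonde structure of $A_n$ under distinct $x_i$ together with $A_n\stackrel{a.s.}{\longrightarrow}A$, $b_n\stackrel{a.s.}{\longrightarrow}b$ to get a well-defined unique minimizer $\tilde\vartheta=A^{-1}b$ and the convergence $\tilde\theta^*_n\stackrel{a.s.}{\longrightarrow}\tilde\theta$ needed for Theorem \ref{theorem:shalizi_Rn} and hence (S6), then invoke Theorem \ref{theorem:forward_cons2}. (You even reproduce the paper's own slightly informal step that ``invertibility passes to the limit''; strictly one should argue that $A$ is the Gram matrix of the linearly independent functions $1,x,x^2$ on $\mathcal X$, hence positive definite, but this is a shared, easily repaired point rather than a divergence from the paper.)
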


\subsection{Asymptotic comparison of forward linear and quadratic models with FPBF}
\label{subsec:compare_linear_quadratic}
Theorems \ref{theorem:linreg} and \ref{theorem:quadreg1} show almost sure exponential convergence of FPBF in favour of the true model $\mathcal M_0$ 
given by (\ref{eq:M_forward2}) when the postulated models are either the forward linear or quadratic regression model.
Now, if the goal is to make asymptotic comparison between the linear and quadratic regression models, then the aforementioned theorems 
ensure the following result:
\begin{theorem}
	\label{theorem:linear_vs_quadreg1}
	Let the true model be given by $\mathcal M_0$ formulated in (\ref{eq:M_forward2}). Assuming that the covariate observations $x_i$; $i\geq 1$ are all distinct
	and that the covariate space $\mathcal X$ is compact, consider comparison of the linear and quadratic regression models $\mathcal M_1$ and $\mathcal M_2$
	given by (\ref{eq:M_forward1}) and (\ref{eq:M_forward3}), respectively. Let $\tilde\theta_1$ and $\tilde\theta_2$ be the unique minimizers of $h_1$ and $h_2$.
	Then,
	\begin{equation*}
	\frac{1}{n}\log FPBF^{(n)}(\mathcal M_1,\mathcal M_2)\stackrel{a.s.}{\longrightarrow}-\left(h_1(\tilde\theta_1)-h_2(\tilde\theta_2)\right),~\mbox{as}~n\rightarrow\infty.
	\end{equation*}
\end{theorem}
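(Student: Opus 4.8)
The plan is to derive this as an immediate consequence of Theorems~\ref{theorem:linreg} and~\ref{theorem:quadreg1} together with the multiplicative structure of the forward pseudo-Bayes factor; equivalently, it is Theorem~\ref{theorem:forward_cons2_mis} specialized to $(\mathcal M_1,\mathcal M_2,\mathcal M_0)$, whose hypotheses were verified for both $\mathcal M_1$ and $\mathcal M_2$ in Sections~\ref{subsec:forward1} and~\ref{subsec:quadreg}. First I would record the elementary factorization: directly from the definition~(\ref{eq:pbf_forward}), the common factor $\prod_{i=1}^n\pi(y_i|\bY_{n,-i},\bX_n,\mathcal M_0)$ cancels, so that
\[
FPBF^{(n)}(\mathcal M_1,\mathcal M_2)=\frac{FPBF^{(n)}(\mathcal M_1,\mathcal M_0)}{FPBF^{(n)}(\mathcal M_2,\mathcal M_0)},
\]
and hence
\[
\frac{1}{n}\log FPBF^{(n)}(\mathcal M_1,\mathcal M_2)=\frac{1}{n}\log FPBF^{(n)}(\mathcal M_1,\mathcal M_0)-\frac{1}{n}\log FPBF^{(n)}(\mathcal M_2,\mathcal M_0).
\]

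Next I would apply the two already-established specialized results. The hypotheses assumed here (true model $\mathcal M_0$ of the form~(\ref{eq:M_forward2}), covariate space $\mathcal X$ compact, covariates $x_i$ all distinct) are precisely those of Theorem~\ref{theorem:linreg} for $\mathcal M_1$ with parameter space $\mathbb R\times\mathbb R\times\mathbb R^+$, and of Theorem~\ref{theorem:quadreg1} for $\mathcal M_2$ with parameter space $\mathbb R\times\mathbb R\times\mathbb R\times\mathbb R^+$ (distinctness of the $x_i$ being needed only for the quadratic model, to invert the Vandermonde-type matrix $A_n$). Therefore the first term on the right converges almost surely to $-h_1(\tilde\theta_1)$ with $h_1$ given by~(\ref{eq:h_linear_reg1}) and $\tilde\theta_1=(\tilde\alpha,\tilde\beta,\tilde\sigma^2_\epsilon)$ given by~(\ref{eq:alpha1}),~(\ref{eq:beta1}),~(\ref{eq:sigma1}), while the second term converges almost surely to $-h_2(\tilde\theta_2)$ with $h_2$ given by~(\ref{eq:h_quad_reg1}) and $\tilde\theta_2$ by~(\ref{eq:ls_quad_true1}) and~(\ref{eq:sigma_quad}).

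Finally I would intersect the two probability-one events on which these convergences hold (a finite intersection, hence still probability one) and subtract the limits, obtaining $\frac{1}{n}\log FPBF^{(n)}(\mathcal M_1,\mathcal M_2)\stackrel{a.s.}{\longrightarrow}-h_1(\tilde\theta_1)-(-h_2(\tilde\theta_2))=-\bigl(h_1(\tilde\theta_1)-h_2(\tilde\theta_2)\bigr)$, as claimed. The only points worth a word of care are that $h_1(\tilde\theta_1)$ and $h_2(\tilde\theta_2)$ are genuine finite real numbers --- immediate from their closed-form expressions, using $\sigma^2_0>0$, $|\mathcal X|<\infty$, and continuity of $\eta_0$ on the compact set $\mathcal X$ so that the residual integrals are finite --- and that the minimizers are unique, which was checked in Sections~\ref{subsec:forward1} and~\ref{subsec:quadreg} (with $A$ in~(\ref{eq:A_b}) invertible as the a.s.\ limit of the invertible $A_n$). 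There is no substantive obstacle here: the entire content is carried by the two prior convergence theorems and the algebraic identity for the pseudo-Bayes factor.
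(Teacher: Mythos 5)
Your proof is correct and follows exactly the route the paper intends: Theorem \ref{theorem:linear_vs_quadreg1} is obtained by the telescoping identity $\frac{1}{n}\log FPBF^{(n)}(\mathcal M_1,\mathcal M_2)=\frac{1}{n}\log FPBF^{(n)}(\mathcal M_1,\mathcal M_0)-\frac{1}{n}\log FPBF^{(n)}(\mathcal M_2,\mathcal M_0)$ combined with Theorems \ref{theorem:linreg} and \ref{theorem:quadreg1}, which is precisely the argument used for the general Theorem \ref{theorem:forward_cons2_mis}. Your added remarks on finiteness of $h_1(\tilde\theta_1)$, $h_2(\tilde\theta_2)$ and on intersecting the two almost-sure events are correct and harmless elaborations.
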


\subsection{FPBF asymptotics for variable selection in autoregressive time series regression}
\label{subsec:ar1_forward}
Let us consider the following first order autoregressive (AR(1)) time series linear regression as model $\mathcal M_1$:
\begin{equation}
	y_t=\rho_1 y_{t-1}+\beta_1 x_t+\epsilon_{1t};~t=1,\ldots,n,
	\label{eq:ar1_x}
\end{equation}
where $y_0\equiv 0$ $x_t;t=1,\ldots,n$ are covariate observations associated with variable $x$ and 
$\epsilon_{1t}\stackrel{iid}{\sim}N\left(0,\sigma^2_{1}\right)$. Here $\theta_1=\left(\rho_1,\beta_1,\sigma^2_{1}\right)$ is the 
set of unknown parameters and 
$\Theta_1=\mathbb R\times\mathbb R\times\mathbb R^+$ is the parameter space. We might wish to compare this model with another AR(1) regression model
with covariate $z$ different from $x$. This model, which we refer to as $\mathcal M_2$, is given as follows: 
\begin{equation}
	y_t=\rho_2 y_{t-1}+\beta_2 z_t+\epsilon_{2t};~t=1,\ldots,n,
	\label{eq:ar1_z}
\end{equation}
where $y_0\equiv 0$ $z_t;t=1,\ldots,n$ are observations associated with covariate $z$ different from $x$ 
and $\theta_2=\left(\rho_2,\beta_2,\sigma^2_{2}\right)$ is the set of parameters and the parameter space 
$\Theta_2=\mathbb R\times\mathbb R\times\mathbb R^+$ remains the same as $\Theta_1$. Here, for $t=1,\ldots,n$, 
$\epsilon_{2t}\stackrel{iid}{\sim}N\left(0,\sigma^2_{2}\right)$. 
Let the true model $\mathcal M_0$ be given by
\begin{equation}
	y_t=\rho_0 y_{t-1}+\beta_0 (x_t+z_t)+\epsilon_{0t};~t=1,\ldots,n,
	\label{eq:ar1_both}
\end{equation}
where $|\rho_0|<1$ and $\epsilon_{0t}\stackrel{iid}{\sim}N\left(0,\sigma^2_{0}\right)$, for $t=1,\ldots,n$.

Our goal in this example is to compare models $\mathcal M_1$ and $\mathcal M_2$ using FPBF. Note that if we use the same priors for $\theta_1$ and $\theta_2$,
this boils down to selection of either covariate $x$ or $z$ in the AR(1) regression. Hence, variable selection constitutes an important ingredient in this
FPBF convergence example. Note that both the models $\mathcal M_1$ and $\mathcal M_2$ are wrong with respect to the true model $\mathcal M_0$ which consists of both
$x$ and $z$. The purpose of variable selection here is then to select the more important variable among $x$ and $z$ when none of the available models considers
both $x$ and $z$.

We make the following assumptions that are analogous to the AR(1) regression example considered in \ctn{Chandra20}: 
\begin{enumerate}
	\item[(A1)] \label{ar1} 
	\begin{align}
		&\frac{1}{n}\sum_{t=1}^n x_t \rightarrow 0,~\frac{1}{n}\sum_{t=1}^n z_t \rightarrow 0;\notag\\ 
		&\frac{1}{n}\sum_{t=1}^n x_tz_t \rightarrow 0;~\frac{1}{n}\sum_{t=1}^n x_{t+k}z_t \rightarrow 0;
		~\frac{1}{n}\sum_{t=1}^n x_tz_{t+k} \rightarrow 0~\mbox{for any}~k\geq 1;\nonumber\\
		&\frac{1}{n}\sum_{t=1}^n x_{t+k}x_t \rightarrow 0~\mbox{and}~\frac{1}{n}\sum_{t=1}^n z_{t+k}z_t \rightarrow 0~\mbox{for any}~k\geq 1;\nonumber\\
		&\frac{1}{n}\sum_{t=1}^nx^2_t\rightarrow \sigma^2_x~\mbox{and}~\frac{1}{n}\sum_{t=1}^nz^2_t\rightarrow \sigma^2_z,\nonumber
	\end{align}
	as $n\rightarrow\infty$. In the above, $\sigma^2_x$ and $\sigma^2_z$ are positive quantities.
	
\item[(A2)] \label{ar2} $\underset{t\geq 1}{\sup}~|x_t\beta_0|<C$ and $\underset{t\geq 1}{\sup}~|z_t\beta_0|<C$, for some $C>0$.  
	
\end{enumerate}
Let $\frac{1}{n}\log R^{(1)}_n(\theta)$ and $\frac{1}{n}\log R^{(2)}_n(\theta)$ stand for $\frac{1}{n}\log R_n(\theta)$ for models $\mathcal M_1$ and $\mathcal M_2$, 
respectively.  
Also let $\sigma^2_{x+z}=\sigma^2_x+\sigma^2_z$. Then proceeding in the same way as in \ctn{Chandra20} it can be shown that
\begin{align}
	&\underset{n\rightarrow\infty}{\lim}~\frac{1}{n}\log R^{(1)}_n(\theta)\stackrel{a.s.}{=}-h_1(\theta),~\mbox{for all}~\theta\in\Theta_1;\label{eq:pointconv1}\\
	&\underset{n\rightarrow\infty}{\lim}~\frac{1}{n}\log R^{(2)}_n(\theta)\stackrel{a.s.}{=}-h_2(\theta),~\mbox{for all}~\theta\in\Theta_2,\label{eq:pointconv2}
\end{align}
and the above convergences are uniform on compact subsets of $\Theta_1$ and $\Theta_2$, respectively. In the above,
	\begin{multline}
	h_1(\theta)=\log\left(\frac{\sigma}{\sigma_0}\right)+\left(\frac{1}{2\sigma^2}-\frac{1}{2\sigma^2_0}\right)\left(\frac{\sigma^2_0}{1-\rho^2_0}
		+\frac{\beta^2_0\sigma^2_{x+z}}{1-\rho^2_0}\right)
		+\left(\frac{\rho^2}{2\sigma^2}-\frac{\rho^2_0}{2\sigma^2_0}\right)\left(\frac{\sigma^2_0}{1-\rho^2_0}+\frac{\beta^2_0\sigma^2_{x+z}}{1-\rho^2_0}\right)\\
		+\frac{1}{2\sigma^2}\beta^2\sigma^2_{x+z}-
		\frac{1}{2\sigma^2_0}\beta^2_0\sigma^2_{x+z}
		-\left(\frac{\rho}{\sigma^2}-\frac{\rho_0}{\sigma^2_0}\right)\left(\frac{\rho_0\sigma^2_0}{1-\rho^2_0}+\frac{\rho_0\beta^2_0\sigma^2_{x+z}}{1-\rho^2_0}\right)
		-\left(\frac{\beta}{\sigma^2}-\frac{\beta_0}{\sigma^2_0}\right)\sigma^2_{x+z}\beta_0+\frac{\sigma^2_z\beta(2\beta_0-\beta)}{2\sigma^2}.
		\label{eq:h1_ar1}
	\end{multline}
and
	\begin{multline}
	h_2(\theta)=\log\left(\frac{\sigma}{\sigma_0}\right)+\left(\frac{1}{2\sigma^2}-\frac{1}{2\sigma^2_0}\right)\left(\frac{\sigma^2_0}{1-\rho^2_0}
		+\frac{\beta^2_0\sigma^2_{x+z}}{1-\rho^2_0}\right)
		+\left(\frac{\rho^2}{2\sigma^2}-\frac{\rho^2_0}{2\sigma^2_0}\right)\left(\frac{\sigma^2_0}{1-\rho^2_0}+\frac{\beta^2_0\sigma^2_{x+z}}{1-\rho^2_0}\right)\\
		+\frac{1}{2\sigma^2}\beta^2\sigma^2_{x+z}-
		\frac{1}{2\sigma^2_0}\beta^2_0\sigma^2_{x+z}
		-\left(\frac{\rho}{\sigma^2}-\frac{\rho_0}{\sigma^2_0}\right)\left(\frac{\rho_0\sigma^2_0}{1-\rho^2_0}+\frac{\rho_0\beta^2_0\sigma^2_{x+z}}{1-\rho^2_0}\right)
		-\left(\frac{\beta}{\sigma^2}-\frac{\beta_0}{\sigma^2_0}\right)\sigma^2_{x+z}\beta_0+\frac{\sigma^2_x\beta(2\beta_0-\beta)}{2\sigma^2}.
		\label{eq:h2_ar1}
	\end{multline}
For $i=1,2$, for model $\mathcal M_i$, let
\begin{equation}
	\mathcal G^{(i)}_n=\left\{\theta\in\Theta_i:|\rho|\leq \exp\left(\gamma_i n\right),|\beta|\leq \exp\left(\gamma_i n\right),
	\sigma^{-2}_{\epsilon}\leq \exp\left(\gamma_i n\right)\right\},
	\label{eq:Gn_ar1}
\end{equation}
where $\gamma_i>2h_i\left(\Theta_i\right)$. Then $\mathcal G^{(i)}_n\uparrow\Theta_i$, as $n\rightarrow\infty$. 
Let us assume that under both $\mathcal M_1$ and $\mathcal M_2$, the prior for $\left(\rho,\beta,\sigma^{-2}_{\epsilon}\right)$ is such that the prior expectations 
$E(|\rho|)$, $E(|\beta|)$ and $E(\sigma^{-2}_{\epsilon})$ are finite. 

With these, conditions (S1)--(S5) and (S7) of Shalizi hold for $\mathcal M_1$ and $\mathcal M_2$ in the same way as the AR(1) regression example of \ctn{Chandra20}.
Thus verification of (S6) only remains, for which we begin with the following result. 

\begin{theorem}
	\label{th:concave}
	The functions $\frac{1}{n}\log R^{(1)}_n(\theta)$ and $\frac{1}{n}\log R^{(2)}_n(\theta)$ are asymptotically concave in $\theta$.
\end{theorem}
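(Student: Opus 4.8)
The plan is to reduce the claim to the classical exponential-family structure of the Gaussian likelihood. First I would note that $\frac1n\log R^{(i)}_n(\theta)=\frac1n\log f^{(i)}_\theta(\bY_n)-\frac1n\log f_{\theta_0}(\bY_n)$ and that the subtracted term is free of $\theta$, so adding it back does not affect concavity; it therefore suffices to analyse $\frac1n\log f^{(i)}_\theta$. For $\mathcal M_1$,
\[
\frac1n\log f^{(1)}_\theta(\bY_n)=-\tfrac12\log(2\pi\sigma^2)-\frac{1}{2n\sigma^2}\sum_{t=1}^n\bigl(y_t-\rho y_{t-1}-\beta x_t\bigr)^2 ,
\]
an average (up to an additive constant) of $N\bigl(\rho y_{t-1}+\beta x_t,\sigma^2\bigr)$ log-densities whose conditional means are affine in the regression coefficients; the situation for $\mathcal M_2$ is identical with $z_t$ in place of $x_t$.

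Next I would pass to the natural exponential-family coordinates $\psi=(\psi_1,\psi_2,\psi_3)=\bigl(\rho/\sigma^2,\ \beta/\sigma^2,\ -1/(2\sigma^2)\bigr)$, a diffeomorphism of $\Theta=\mathbb R\times\mathbb R\times\mathbb R^+$ onto $\mathbb R^2\times(-\infty,0)$. In these coordinates each summand becomes $(\psi_1 y_{t-1}+\psi_2 x_t)y_t+\psi_3 y_t^2-A\bigl(\psi_1 y_{t-1}+\psi_2 x_t,\ \psi_3\bigr)$, where $A$ is the Gaussian log-partition function, which is convex on $\{\psi_3<0\}$ by the standard fact that cumulant functions of minimal exponential families are convex; since $\psi\mapsto(\psi_1 y_{t-1}+\psi_2 x_t,\psi_3)$ is linear, $-A$ composed with it is concave, so each summand --- hence $\frac1n\log f^{(i)}_\theta$ and $\frac1n\log R^{(i)}_n$ --- is concave in $\psi$, in fact exactly and for every $n$. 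If the statement is wanted literally in the coordinates $(\rho,\beta,\sigma^2)$, I would instead compute the Hessian of $\frac1n\log R^{(i)}_n$ in $(\rho,\beta,\phi)$ with $\phi:=\sigma^{-2}$, which takes the block form $\left(\begin{smallmatrix}-\phi\widehat M_n & -\tfrac12\nabla Q_n\\ -\tfrac12\nabla Q_n^{\top} & -\tfrac1{2\phi^2}\end{smallmatrix}\right)$ with $\widehat M_n$ the positive semidefinite empirical Gram matrix of the regressors $(y_{t-1},x_t)$ and $Q_n$ the averaged residual sum of squares, and a one-line Schur-complement computation then shows negative semidefiniteness on the set $\{Q_n(\rho,\beta)\le q_n^{*}+\sigma^2/2\}$, where $q_n^{*}=\min Q_n$. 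Because $\widehat M_n$, $q_n^{*}$ and the minimiser of $Q_n$ over $(\rho,\beta)$ converge almost surely (by the arguments that yield (\ref{eq:pointconv1})--(\ref{eq:pointconv2}) under (A1)--(A2), with $\widehat M_n$ nonsingular for $n$ large), this set eventually contains any prescribed compact neighbourhood of the limiting minimiser $\tilde\theta$ --- which is the sense of ``asymptotic concavity'' that the later verification of (S6) actually uses.

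I expect the only genuine obstacle to be a conceptual one: in the raw coordinates $(\rho,\beta,\sigma^2)$ the map is \emph{not} globally concave --- already $-\tfrac12\log\sigma^2$ is convex in $\sigma^2$ --- so one must either commit to the canonical parametrisation $\psi$ (where concavity is exact) or adopt the regional reading above and carry the almost sure moment limits through with care. The one point needing attention in the second route is that the strong law underlying those limits concerns the dependent, possibly non-stationary AR(1) sequence $\{y_t\}$ generated by $\mathcal M_0$, so I would appeal to the same stationarity/ergodicity argument for the limiting causal AR(1) process, in combination with (A1)--(A2), that \ctn{Chandra20} use to establish (\ref{eq:pointconv1})--(\ref{eq:pointconv2}).
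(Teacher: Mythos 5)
The paper offers no argument of its own here: its entire proof is the sentence ``the proof follows in the same line as that of Theorem 17 of \ctn{Chandra20}'', so there is nothing concrete to match your proposal against except that reference. On its own terms your argument is sound and, if anything, more informative than what the paper records. Dropping the $\theta$-free term $\frac{1}{n}\log f_{\theta_0}(\bY_n)$ is legitimate; the passage to the natural parameters $\psi=(\rho/\sigma^2,\beta/\sigma^2,-1/(2\sigma^2))$ gives exact concavity for every $n$ by convexity of the Gaussian log-partition function; and your Hessian in $(\rho,\beta,\phi)$ with $\phi=\sigma^{-2}$ is computed correctly, with the Schur complement condition reducing (via $\nabla Q_n=2\widehat M_n u$ and $Q_n-q_n^*=u^{\top}\widehat M_n u$) to exactly the region $Q_n\le q_n^*+\sigma^2/2$ you state. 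You are also right to flag that global concavity in the raw coordinates $(\rho,\beta,\sigma^2)$ is false, so ``asymptotically concave'' must be read either in the canonical parametrisation or as eventual concavity on a neighbourhood of $\tilde\theta$; the latter reading is the one the paper actually uses, since Theorem \ref{th:uniroot} invokes concavity only to get uniqueness of the maximiser inside the small ball $\overline{N}_\eta(\tilde\theta)$. Two minor points: your phrase ``any prescribed compact neighbourhood'' is slightly too strong --- the set $\{Q_n\le q_n^*+\sigma^2/2\}$ only swallows \emph{sufficiently small} neighbourhoods of $\tilde\theta$ (a large compact set can contain points where the limiting $Q-q^*$ exceeds $\sigma^2/2$), but small neighbourhoods are all that Theorem \ref{th:uniroot} requires; and for the uniqueness step you should note explicitly that on the interior of that region with $\widehat M_n$ nonsingular the Hessian is negative \emph{definite}, giving strict concavity, since mere semidefiniteness would not rule out a flat ridge of maximisers.
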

\begin{proof}
	The proof follows in the same line as that of Theorem 17 of \ctn{Chandra20}.
%
\end{proof}

It is also easy to see that both $h_1(\theta)$ and $h_2(\theta)$ given by (\ref{eq:h1_ar1}) and (\ref{eq:h2_ar1}) are convex in $\theta$. Hence, there exist unique
minimizers $\tilde\theta_1$ and $\tilde\theta_2$, respectively, of $h_1$ and $h_2$.
Theorem \ref{th:uniroot} shows consistency of the unique roots of $\frac{1}{n}\log R^{(1)}_n(\theta)$ and $\frac{1}{n}\log R^{(2)}_n(\theta)$ for
$\tilde\theta_1$ and $\tilde\theta_2$, respectively.


\begin{theorem}
	\label{th:uniroot}
	Given any $\eta>0$, $\frac{1}{n}\log R^{(1)}_n(\theta)$ and $\frac{1}{n}\log R^{(2)}_n(\theta)$ have their unique roots in the 
	$\eta$-neighbourhood of $\tilde\theta_1$ and $\tilde\theta_2$, respectively, almost surely, for large $n$.
\end{theorem}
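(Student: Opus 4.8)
The plan is to read Theorem~\ref{th:uniroot} as asserting that the unique stationary point (the global maximizer) of $\frac{1}{n}\log R^{(i)}_n(\theta)$ sits in the $\eta$-ball about $\tilde\theta_i$ for all large $n$, almost surely; together with $\eta$ being arbitrary this yields $\tilde\theta^{*}_{in}\stackrel{a.s.}{\longrightarrow}\tilde\theta_i$, which is exactly the hypothesis consumed by Theorem~\ref{theorem:shalizi_Rn}. Two ingredients are already available: Theorem~\ref{th:concave} gives (asymptotic) concavity of $\frac{1}{n}\log R^{(i)}_n(\theta)$, so that for $n$ large it is strictly concave and the root of its gradient is well defined, unique, and equal to the global maximizer $\tilde\theta^{*}_{in}$; and $h_i$ is convex with the \emph{unique} minimizer $\tilde\theta_i$. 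Hence only an argmax-consistency step remains, which I would extract from the uniform-on-compacta convergence $\frac{1}{n}\log R^{(i)}_n(\theta)\stackrel{a.s.}{\longrightarrow}-h_i(\theta)$ in (\ref{eq:pointconv1})--(\ref{eq:pointconv2}), using concavity to eliminate any contribution from outside a fixed compact set.

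Concretely, fix $\eta>0$ and $i\in\{1,2\}$, write $f_n=\frac{1}{n}\log R^{(i)}_n$, and let $S=\{\theta\in\Theta_i:\|\theta-\tilde\theta_i\|=\eta\}$ and $\bar B=\{\theta\in\Theta_i:\|\theta-\tilde\theta_i\|\le\eta\}$, both compact and with $\bar B$ contained in the interior of $\Theta_i$ for $\eta$ small (and the case of larger $\eta$ follows a fortiori). Since $\tilde\theta_i$ is the unique minimizer of the continuous convex $h_i$, there is $\delta>0$ with $\inf_{\theta\in S}h_i(\theta)\ge h_i(\tilde\theta_i)+\delta$. By uniform convergence of $f_n$ to $-h_i$ on $\bar B$, almost surely there is $n_0$ such that for all $n\ge n_0$
\begin{equation*}
f_n(\tilde\theta_i)>-h_i(\tilde\theta_i)-\tfrac{\delta}{3}
\qquad\text{and}\qquad
\sup_{\theta\in S}f_n(\theta)<-h_i(\tilde\theta_i)-\tfrac{2\delta}{3},
\end{equation*}
so $f_n(\tilde\theta_i)$ strictly exceeds the supremum of $f_n$ over $S$. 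Now I invoke concavity of $f_n$: for any $\theta'$ with $\|\theta'-\tilde\theta_i\|>\eta$ the segment $[\tilde\theta_i,\theta']$ meets $S$ at a point $z=(1-s)\tilde\theta_i+s\theta'$ with $s\in(0,1)$, and $f_n(z)\ge (1-s)f_n(\tilde\theta_i)+s f_n(\theta')$ forces $f_n(\theta')<f_n(\tilde\theta_i)$ because $f_n(z)<f_n(\tilde\theta_i)$. Hence every point of $\Theta_i$ outside $\bar B$ is strictly worse than $\tilde\theta_i$, so $\sup_{\Theta_i}f_n=\sup_{\bar B}f_n$, the supremum is attained (continuity on the compact $\bar B$), it cannot be attained on $S$, and therefore the maximizer lies in the open $\eta$-ball about $\tilde\theta_i$. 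Strict concavity makes that maximizer, equivalently the unique root of $\nabla f_n$, unique; applying this for $i=1,2$ and recalling that $\eta>0$ was arbitrary proves the theorem.

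I expect the only genuinely delicate point to be the upgrade from ``asymptotically concave'' (Theorem~\ref{th:concave}) to strict concavity of $f_n$ for all sufficiently large $n$ on the region in play, which is what makes the root of $\nabla f_n$ a bona fide single point. If Theorem~\ref{th:concave} in fact only yields concavity of the limiting functional together with a vanishing non-concave part of the Hessian, I would instead argue directly that, by uniform convergence of second derivatives on a fixed compact neighbourhood of $\tilde\theta_i$, $\nabla^2 f_n$ is negative definite there for large $n$; this gives local strict concavity near $\tilde\theta_i$, which combined with the boundary comparison above still pins a single critical point inside the $\eta$-ball. Everything else is the routine ``uniform convergence $+$ unique argmax'' argument and needs no new idea; the sign bookkeeping ($f_n\to-h_i$ versus $h_i$ convex) and the verification that $\tilde\theta_i$ lies in the interior of $\Theta_i=\mathbb R\times\mathbb R\times\mathbb R^+$ are straightforward from (\ref{eq:h1_ar1})--(\ref{eq:h2_ar1}).
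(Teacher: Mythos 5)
Your proposal is correct and follows essentially the same route as the paper's own proof: both exploit the uniform-on-compacta convergence of $\frac{1}{n}\log R^{(i)}_n$ to $-h_i$, the strict gap between $h_i$ on the sphere $\|\theta-\tilde\theta_i\|=\eta$ and $h_i(\tilde\theta_i)$, a boundary comparison showing the maximum cannot sit on that sphere, and Theorem~\ref{th:concave} for uniqueness. Your version is in fact slightly tighter in two places where the paper is terse --- you make the gap $\delta$ explicit rather than letting $\varepsilon\to 0$ in a non-strict inequality, and you spell out the concavity segment argument that rules out maximizers outside the closed ball, which the paper leaves implicit.
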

\begin{proof}
	See Appendix \ref{sec:proof1}.
\end{proof}


For $i=1,2$, let $\tilde\theta^{(i)}_n$ stand for the unique maximizer of $\frac{1}{n}\log R^{(i)}_n(\theta)$. By Theorem \ref{th:uniroot}
\[\tilde\theta^{(i)}_n\stackrel{a.s.}{\longrightarrow}\tilde\theta^{(i)},~\mbox{for}~i=1,2,\]
which, in turn implies thanks to Theorem \ref{theorem:shalizi_Rn}, that (\ref{eq:shalizi_Rn}), and hence (S6) of Shalizi, holds for both $\mathcal M_1$ and $\mathcal M_2$.

In other words, models $\mathcal M_1$ and $\mathcal M_2$ satisfy conditions (S1)--(S7) of Shalizi.
We summarize below our results on variable selection in forward AR(1) regression framework.

\begin{theorem}[FPBF consistency for $\mathcal M_1$ versus $\mathcal M_0$]
	\label{theorem:ar1_forward1}
	Consider the AR(1) regression models $\mathcal M_1$ and $\mathcal M_0$ given by (\ref{eq:ar1_x}) and (\ref{eq:ar1_both}). 
	Then under assumptions (A1) and (A2), 
	$$\underset{n\rightarrow\infty}{\lim}~\frac{1}{n}\log FPBF^{(n)}(\mathcal M_1,\mathcal M_0)\stackrel{a.s.}{=}-h_1(\tilde\theta_1),$$ 
	where $h_1$ is given by (\ref{eq:h1_ar1}) and $\tilde\theta_1$ is its unique minimizer.
\end{theorem}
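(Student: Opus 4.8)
The plan is to recognise Theorem~\ref{theorem:ar1_forward1} as a direct instance of the general forward result, Theorem~\ref{theorem:forward_cons2}, applied with $\mathcal M=\mathcal M_1$ and true model $\mathcal M_0$ given by (\ref{eq:ar1_both}). Thus the work reduces to checking that every hypothesis of Theorem~\ref{theorem:forward_cons2} is in force and then reading off the limit. First I would collect what the preceding discussion has already supplied: under (A1)--(A2), conditions (S1)--(S7) of Shalizi hold for $\mathcal M_1$ --- (S1)--(S5) and (S7) exactly as in the AR(1) example of \ctn{Chandra20}, and (S6) via Theorems~\ref{th:concave}, \ref{th:uniroot} and \ref{theorem:shalizi_Rn}, using the almost sure pointwise and locally uniform convergence (\ref{eq:pointconv1}) of $\frac{1}{n}\log R^{(1)}_n(\theta)$ to $-h_1(\theta)$ together with the almost sure convergence $\tilde\theta^{(1)}_n\to\tilde\theta_1$ of the maximiser. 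Since $h_1$ in (\ref{eq:h1_ar1}) is convex in $\theta$, it has a unique minimiser $\tilde\theta_1$, lying in the interior of $\Theta_1=\mathbb R\times\mathbb R\times\mathbb R^+$; and because the data-generating model $\mathcal M_0$ contains both covariates $x$ and $z$ with $\beta_0\neq 0$ while $\mathcal M_1$ contains only $x$, no $\theta\in\Theta_1$ reproduces the true conditional law, so $h_1(\Theta_1)=h_1(\tilde\theta_1)>0$ and in particular $\tilde\theta_1\neq\theta_0$, i.e. the misspecified regime demanded by Theorem~\ref{theorem:forward_cons2}.

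Next I would dispatch the remaining structural hypotheses. The parameter spaces $\Theta_0$ and $\Theta_1$ both equal $\mathbb R\times\mathbb R\times\mathbb R^+$, which is a complete separable metric space. For each fixed $\theta$, $f(y_t\mid\theta,x_t,\bY^{(t-1)},\mathcal M_1)$ is the $N(\rho y_{t-1}+\beta x_t,\sigma^2)$ density, hence bounded in its argument by $(2\pi\sigma^2)^{-1/2}$ and jointly continuous in $\theta$; the same holds for $f(y_t\mid\theta,x_t,\bY^{(t-1)},\mathcal M_0)$. Because $\mathcal M_0$ is correctly specified, the Shalizi conditions for $\mathcal M_0$ relative to itself hold by the same, in fact simpler, verification (the corresponding KL-divergence rate is minimised at $\theta_0$ itself), so the leave-one-out cross-validation posterior under $\mathcal M_0$ concentrates at $\theta_0$, which is exactly what the $\mathcal M_0$-half of the proof of Theorem~\ref{theorem:forward_cons2} requires for (\ref{eq:forward_cons4}). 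I would note in passing that although in the AR(1) setting $y_i$ still appears in the $j=i+1$ conditioning term of the omitted-observation likelihood (\ref{eq:post_forward2}), this does not affect the $\tfrac1n$-scaled limits, since deleting a single summand leaves the relevant Ces\`aro limits unchanged; the general theorem absorbs this.

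With all hypotheses verified, Theorem~\ref{theorem:forward_cons2} yields $\frac{1}{n}\log FPBF^{(n)}(\mathcal M_1,\mathcal M_0)\stackrel{a.s.}{\longrightarrow}-h(\tilde\theta_1)$ with $h$ as in (\ref{eq:h_forward}); it remains to identify this $h$ with $h_1$ of (\ref{eq:h1_ar1}). This follows from (S2)--(S3) together with the pointwise limit (\ref{eq:pointconv1}): the KL-divergence rate of $\mathcal M_1$ from $\mathcal M_0$ is precisely $h_1$, and $\tilde\theta_1$ is its unique minimiser. Combining these gives the claimed $\frac{1}{n}\log FPBF^{(n)}(\mathcal M_1,\mathcal M_0)\stackrel{a.s.}{\longrightarrow}-h_1(\tilde\theta_1)$.

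I expect the only genuinely delicate point to be the honest argument that $\tilde\theta_1\neq\theta_0$, equivalently $\theta_0\notin\Theta_1$: one must check that omitting the covariate $z$ truly induces misspecification, which under (A1) is guaranteed by $\sigma^2_z>0$ and $\beta_0\neq 0$ (so that the $\sigma^2_z\beta(2\beta_0-\beta)/(2\sigma^2)$ contribution in (\ref{eq:h1_ar1}) keeps $h_1$ bounded away from the value attained at a perfectly specified model). Everything else is bookkeeping transported from Theorem~\ref{theorem:forward_cons2}, Theorem~\ref{theorem:shalizi_Rn}, and the $\mathcal M_1/\mathcal M_2$ verification already carried out above.
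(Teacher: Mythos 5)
Your proposal is correct and follows essentially the same route as the paper: Theorem \ref{theorem:ar1_forward1} is stated there as a summary of the preceding verification that $\mathcal M_1$ satisfies (S1)--(S7) (via the argument of \ctn{Chandra20} together with Theorems \ref{th:concave}, \ref{th:uniroot} and \ref{theorem:shalizi_Rn} for (S6)), after which the result is read off from Theorem \ref{theorem:forward_cons2} with $h=h_1$. Your additional remarks on boundedness and continuity of the Gaussian densities, the misspecification argument for $\tilde\theta_1\neq\theta_0$, and the harmless appearance of $y_i$ in the $j=i+1$ conditioning term are consistent with, and slightly more explicit than, what the paper records.
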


\begin{theorem}[FPBF consistency for $\mathcal M_2$ versus $\mathcal M_0$]
	\label{theorem:ar1_forward2}
	Consider the AR(1) regression models $\mathcal M_2$ and $\mathcal M_0$ given by (\ref{eq:ar1_z}) and (\ref{eq:ar1_both}). 
	Then under assumptions (A1) and (A2), 
	$$\underset{n\rightarrow\infty}{\lim}~\frac{1}{n}\log FPBF^{(n)}(\mathcal M_2,\mathcal M_0)\stackrel{a.s.}{=}-h_2(\tilde\theta_2),$$ 
	where $h_1$ is given by (\ref{eq:h2_ar1}) and $\tilde\theta_2$ is its unique minimizer.
\end{theorem}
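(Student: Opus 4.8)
The plan is to derive this as a specialization of Theorem~\ref{theorem:forward_cons2} to the pair $(\mathcal M=\mathcal M_2,\mathcal M_0)$, since essentially all the ingredients have already been assembled in the discussion preceding Theorem~\ref{theorem:ar1_forward1}; what remains is to check that the hypotheses of Theorem~\ref{theorem:forward_cons2} are in force with $\Theta=\Theta_2=\mathbb R\times\mathbb R\times\mathbb R^+$, $h=h_2$ as in (\ref{eq:h2_ar1}), and $\tilde\theta=\tilde\theta_2$.

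First, I would assemble Shalizi's conditions for $\mathcal M_2$. Conditions (S1)--(S5) and (S7) hold exactly as in the AR(1) regression treatment of \ctn{Chandra20}, using assumptions (A1)--(A2): (A1) kills the $x$--$z$ cross terms and (A2) bounds $|x_t\beta_0|,|z_t\beta_0|$, which together yield the almost sure pointwise limit (\ref{eq:pointconv2}) of $\frac{1}{n}\log R^{(2)}_n(\theta)$ together with uniformity on compact subsets of $\Theta_2$, while the sieves $\mathcal G^{(2)}_n$ of (\ref{eq:Gn_ar1}) with finite prior expectations of $|\rho|,|\beta|,\sigma^{-2}_{\epsilon}$ give the sieve estimate needed for (S5)(1). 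For (S6) I would invoke (\ref{eq:shalizi_Rn}): asymptotic concavity of $\frac{1}{n}\log R^{(2)}_n(\theta)$ (Theorem~\ref{th:concave}) gives, via Theorem~\ref{th:uniroot}, that its unique maximizer $\tilde\theta^{(2)}_n$ converges almost surely to $\tilde\theta_2$, and Theorem~\ref{theorem:shalizi_Rn} then turns this into (\ref{eq:shalizi_Rn}). Hence $\mathcal M_2$ satisfies (S1)--(S7).

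Next I would dispatch the remaining structural hypotheses. Convexity of $h_2$ (noted after (\ref{eq:h2_ar1})) furnishes the unique minimizer $\tilde\theta_2$ over $\Theta_2$; because $\mathcal M_2$ uses only the covariate $z$ whereas the true model $\mathcal M_0$ involves $x+z$ and, under (A1), $x$ and $z$ are asymptotically uncorrelated with positive variances (so neither is a multiple of the other), $\mathcal M_2$ is genuinely misspecified, giving $h_2(\tilde\theta_2)>0$ and in particular $\tilde\theta_2\neq\theta_0$. The space $\Theta_2$ is complete and separable; the Gaussian transition densities $f(y_i|\theta,x_i,\bY^{(i-1)},\mathcal M_2)$ are continuous in $\theta$ (treated as bounded, exactly as in the linear and quadratic examples); and since $\mathcal M_0$ is fully specified, its leave-one-out cross-validation posterior collapses to $f(y_i|\theta_0,x_i,z_i,\bY^{(i-1)},\mathcal M_0)$ itself, whose log-average converges almost surely to $-\frac{1}{2}\log(2\pi\sigma^2_0)-\frac{1}{2}$ by the strong law applied to $\{\epsilon^2_{0t}\}$, so that (\ref{eq:forward_cons4}) holds trivially. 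All hypotheses of Theorem~\ref{theorem:forward_cons2} being met, that theorem yields $\frac{1}{n}\log FPBF^{(n)}(\mathcal M_2,\mathcal M_0)\stackrel{a.s.}{\longrightarrow}-h_2(\tilde\theta_2)$, with $h_2$ given by (\ref{eq:h2_ar1}).

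The only substantive work is the part cited rather than reproved in this argument, namely the explicit form (\ref{eq:h2_ar1}) of the limiting KL rate and its uniform convergence, the asymptotic concavity of $\frac{1}{n}\log R^{(2)}_n$, and the root-consistency of $\tilde\theta^{(2)}_n$ --- all of which mirror the computations carried out for $\mathcal M_1$ and hinge on the moment control in (A1)--(A2) that makes Kolmogorov's strong law for independent non-identically distributed summands applicable. Granting those, the proof of Theorem~\ref{theorem:ar1_forward2} is just the bookkeeping above.
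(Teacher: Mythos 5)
Your proposal is correct and follows essentially the same route as the paper: the paper also proves this theorem by verifying Shalizi's (S1)--(S7) for $\mathcal M_2$ via the machinery of \ctn{Chandra20} (with (S6) obtained from Theorem \ref{th:concave}, Theorem \ref{th:uniroot} and Theorem \ref{theorem:shalizi_Rn}) and then specializing Theorem \ref{theorem:forward_cons2} with $h=h_2$ and $\tilde\theta=\tilde\theta_2$. The only caveats (boundedness of the Gaussian densities near $\sigma^2_{\epsilon}=0$, and the fully specified $\mathcal M_0$ collapsing the cross-validation posterior to the true density) are handled at the same level of rigor as in the paper itself.
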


\begin{theorem}[FPBF convergence for $\mathcal M_1$ versus $\mathcal M_2$]
	\label{theorem:ar1_forward3}
	Consider the AR(1) regression models $\mathcal M_1$ and $\mathcal M_2$ given by (\ref{eq:ar1_x}) and (\ref{eq:ar1_z}) 
	and the true model $\mathcal M_0$ given by
	(\ref{eq:ar1_both}). 
	Then under assumptions (A1) and (A2), 
	$$\underset{n\rightarrow\infty}{\lim}~\frac{1}{n}\log FPBF^{(n)}(\mathcal M_1,\mathcal M_2)\stackrel{a.s.}{=}-\left(h_1(\tilde\theta_1)-h_2(\tilde\theta_2)\right),$$ 
	where $h_1$ and $h_2$ are given by (\ref{eq:h1_ar1}) and (\ref{eq:h2_ar1}) and $\tilde\theta_1$ and $\tilde\theta_2$ are their respective unique minimizers.
\end{theorem}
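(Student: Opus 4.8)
The plan is to reduce Theorem~\ref{theorem:ar1_forward3} to the two one-sided consistency statements already in hand, along exactly the lines of Theorem~\ref{theorem:forward_cons2_mis}. The starting point is the elementary identity
\[
	\frac{1}{n}\log FPBF^{(n)}(\mathcal M_1,\mathcal M_2)
	=\frac{1}{n}\log FPBF^{(n)}(\mathcal M_1,\mathcal M_0)-\frac{1}{n}\log FPBF^{(n)}(\mathcal M_2,\mathcal M_0),
\]
which holds because in the ratio $FPBF^{(n)}(\mathcal M_1,\mathcal M_0)/FPBF^{(n)}(\mathcal M_2,\mathcal M_0)$ the common product of cross-validation densities $\prod_{i=1}^n\pi(y_i|\bY_{n,-i},\bX_n,\mathcal M_0)$ cancels, leaving precisely $FPBF^{(n)}(\mathcal M_1,\mathcal M_2)$. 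Hence it suffices to control the two terms on the right separately and subtract.

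Before subtracting, I would record that the hypotheses required to invoke Theorems~\ref{theorem:ar1_forward1} and~\ref{theorem:ar1_forward2} -- that is, the hypotheses of Theorem~\ref{theorem:forward_cons2} for the pairs $(\mathcal M_1,\mathcal M_0)$ and $(\mathcal M_2,\mathcal M_0)$ -- have been assembled in the preceding discussion under (A1)--(A2). Concretely: the parameter spaces $\Theta_1=\Theta_2=\mathbb R\times\mathbb R\times\mathbb R^+$ (and $\Theta_0$) are complete separable metric spaces; the Gaussian transition densities $f(y_t|\theta,x_t,\bY^{(t-1)},\mathcal M_j)$, $j=0,1,2$, are bounded and continuous in $\theta$, exactly as verified for the Gaussian models of Section~\ref{subsec:forward1}; conditions (S1)--(S5) and (S7) hold for $\mathcal M_1$ and $\mathcal M_2$ as in \ctn{Chandra20}, with $\frac{1}{n}\log R^{(j)}_n(\theta)\stackrel{a.s.}{\longrightarrow}-h_j(\theta)$ uniformly on compacta; and (S6) follows from the asymptotic concavity of $\frac{1}{n}\log R^{(j)}_n(\theta)$ (Theorem~\ref{th:concave}), the consistency of its unique maximizer for $\tilde\theta_j$ (Theorem~\ref{th:uniroot}), and Theorem~\ref{theorem:shalizi_Rn}. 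Since $h_1,h_2$ in (\ref{eq:h1_ar1})--(\ref{eq:h2_ar1}) are convex (and coercive), the minimizers $\tilde\theta_1,\tilde\theta_2$ exist and are unique, and each differs from $\theta_0$ because the true model $\mathcal M_0$ contains both covariates while $\mathcal M_1$ and $\mathcal M_2$ each contain only one, so $h_1(\tilde\theta_1),h_2(\tilde\theta_2)>0$. For the true model $\mathcal M_0$ the same machinery applies with $h_0(\theta_0)=0$, so its cross-validation posterior concentrates at $\theta_0$. Hence Theorems~\ref{theorem:ar1_forward1} and~\ref{theorem:ar1_forward2} are in force.

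Finally I would combine: by Theorem~\ref{theorem:ar1_forward1}, $\frac{1}{n}\log FPBF^{(n)}(\mathcal M_1,\mathcal M_0)\stackrel{a.s.}{\longrightarrow}-h_1(\tilde\theta_1)$, and by Theorem~\ref{theorem:ar1_forward2}, $\frac{1}{n}\log FPBF^{(n)}(\mathcal M_2,\mathcal M_0)\stackrel{a.s.}{\longrightarrow}-h_2(\tilde\theta_2)$, both convergences holding outside a single $P_{\theta_0}$-null set (the union of the two exceptional sets). Subtracting along the displayed identity yields $\frac{1}{n}\log FPBF^{(n)}(\mathcal M_1,\mathcal M_2)\stackrel{a.s.}{\longrightarrow}-(h_1(\tilde\theta_1)-h_2(\tilde\theta_2))$, as claimed. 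As for where the real content lies: the decomposition step is purely formal, so this theorem introduces no essential new difficulty beyond Theorems~\ref{theorem:ar1_forward1}--\ref{theorem:ar1_forward2}; the genuine obstacle -- already surmounted in the run-up to the statement -- is the verification of (S6) for the dependent AR(1) likelihood ratios, resting on the asymptotic concavity (Theorem~\ref{th:concave}) and root-consistency (Theorem~\ref{th:uniroot}) results together with the strong-law/ergodic limits identifying $h_1,h_2$ under (A1)--(A2) in the manner of \ctn{Chandra20}. Within the present proof the only points deserving a line of care are that the $\mathcal M_0$-factors cancel exactly (immediate from the definition of $FPBF^{(n)}$) and that the two almost-sure statements may be taken on a common event.
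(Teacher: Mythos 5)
Your proposal is correct and follows exactly the paper's route: the paper proves this theorem (as it does Theorem \ref{theorem:forward_cons2_mis} and Theorem \ref{theorem:inverse_cons2_mis}) by the decomposition $\frac{1}{n}\log FPBF^{(n)}(\mathcal M_1,\mathcal M_2)=\frac{1}{n}\log FPBF^{(n)}(\mathcal M_1,\mathcal M_0)-\frac{1}{n}\log FPBF^{(n)}(\mathcal M_2,\mathcal M_0)$ and then invokes the two one-sided consistency results, whose hypotheses were verified in the run-up via (A1)--(A2), Theorems \ref{th:concave} and \ref{th:uniroot}, and Theorem \ref{theorem:shalizi_Rn}. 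Your added remarks on the exact cancellation of the $\mathcal M_0$-factors and on taking the two almost-sure limits on a common null set are correct and consistent with the paper.
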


\section{Illustrations of PBF convergence in inverse regression problems}
\label{sec:illustrations_inverse}
First note that if $f(y_i|\theta,\tilde x_i,\bY^{(i-1)},\mathcal M)$ is bounded and continuous in $(\theta, \tilde x_i)$, then in inverse regression
setups, $g(y_i,\theta, \mathcal M)$ is bounded and continuous in $\theta$ if $\pi(\tilde x_i|\theta,\mathcal M)$ is bounded and continuous in
$(\theta,\tilde x_i)$. Here continuity of $g(\bY^{(i)},\theta\mathcal M)$ follows by the dominated convergence theorem. Thus,
whenever $f(y_i|\theta,x_i,\bY^{(i-1)},\mathcal M_0)$ are also bounded and continuous in $\theta$ and conditions (S1)--(S7) of
Shalizi are verified, almost sure exponential convergence of IPBF also hold, provided that $h^*(\tilde\theta)$ exists.
But existence of $h^*(\tilde\theta)$ requires existence of the limit of $n^{-1}\sum_{i=1}^ng(\bY^{(i)},,\tilde\theta,\mathcal M)$.
Although this is expected to exist, it is not straightforward to guarantee this rigorously for
general regression problems.

However, in practice, simple approximations may be used. For example, if $\mathcal M$ stands for simple linear regression, then let us consider a uniform prior 
for $\tilde x_i$ on $\mathcal X = [-a, a]$, for some $a > 0$. Then
\begin{equation*}
g(\bY^{(i)},\theta,\mathcal M)=\int_{-a}^a\frac{1}{\sigma_{\epsilon}\sqrt{2\pi}}\exp\left\{-\frac{1}{2\sigma^2_{\epsilon}}\left(y_i-\alpha-\beta\tilde x_i\right)^2\right\}
	d\tilde x_i\stackrel{a.s.}{\longrightarrow}|\beta|^{-1},~\mbox{as}~a\rightarrow\infty.
\end{equation*}
Thus for sufficiently large $a$, $g(\bY^{(i)},\tilde\theta,\mathcal M)$ can be approximated
by $|\tilde\beta|^{-1}$, which is independent of $i$.
Thus, for large enough $a$, the limit of $n^{-1}\sum_{i=1}^n\log g(\bY^{(i)},\tilde\theta,\mathcal M)$ can be approximated by $|\beta|^{-1}$.
But in general non-linear regression, such simple approximations are not available.

The setup where $\by_i = \left\{y_{i1},\ldots,y_{im}\right\}$, is far more flexible in this regard. Let us illustrate this
with respect to the models $\mathcal M_0$, $\mathcal M_1$ and $\mathcal M_2$ considered in Section \ref{sec:illustrations_forward}. Assuming invertibility of
$\eta_0$ in addition to continuity, we assume the prior
\begin{equation}
	\pi(\tilde x_i|\eta_0,\mathcal M_0)\equiv U\left(B^{(0)}_{im}(\eta_0)\right)
	\label{eq:prior0}
\end{equation}
under model $\mathcal M_0$, where
\begin{equation}
	B^{(0)}_{im}(\eta_0)=\left\{x:\eta_0(x)\in\left[\bar y_i-\frac{cs_i}{\sqrt{m}},\bar y_i+\frac{cs_i}{\sqrt{m}}\right]\right\}.
	\label{eq:B0}
\end{equation}
In the case of the linear regression model $\mathcal M_1$ , we set
\begin{equation}
	\pi(\tilde x_i|\theta,\mathcal M_1)\equiv U\left(B^{(1)}_{im}(\theta)\right)
	\label{eq:prior1}
\end{equation}
 where
\begin{equation}
	B^{(1)}_{im}(\theta)=\left[\frac{\bar y_i-\alpha}{\beta}-\frac{cs_i}{|\beta|\sqrt{m}},\frac{\bar y_i-\alpha}{\beta}+\frac{cs_i}{|\beta|\sqrt{m}}\right].
	\label{eq:B1}
\end{equation}
For the quadratic model $\mathcal M_2$, note that even if the true model is quadratic, then it is not one-to-one. Hence the general form of the prior considered in 
Section \ref{sec:prior} is not applicable here. In this case, we propose the following prior for $\tilde x_i$ under the quadratic model $\mathcal M_2$:
\begin{equation}
	\pi(\tilde x_i|\theta,\mathcal M_2)\equiv U\left(B^{(2)}_{im}(\theta)\right)
	\label{eq:prior2}
\end{equation}
 where
\begin{equation}
	B^{(2)}_{im}(\theta)=\left[\frac{\bar y_i-\alpha-\beta_2x^2_i}{\beta_1}-\frac{cs_i}{|\beta_1|\sqrt{m}},
	\frac{\bar y_i-\alpha-\beta_2x^2_i}{\beta_1}+\frac{cs_i}{|\beta_1|\sqrt{m}}\right].
	\label{eq:B2}
\end{equation}
Note that the prior depends upon $x_i$ itself, which is the truth in this case. It is unusual in Bayesian inference to make the prior depend upon the truth. 
Indeed, the true parameter is always unknown; had it been known, then one would give full prior probability to the true
parameter. In our case $x_i$ is actually known but a prior is needed for $\tilde x_i$ for the sake of cross-validation. Moreover, the prior does not consider 
$x_i$ to be known as long as the sample sizes $n$ and $m$ remain finite and $\theta$ is unknown or takes false values. The prior has substantial variance
in these cases. Hence, although unusual, such a prior on $\tilde x_i$ is not untenable for inverse cross-validation.

Now observe that $\tilde\theta_1$ and $\tilde\theta_2$ associated with models $\mathcal M_1$ and $\mathcal M_2$ remain the same as those
in Section \ref{sec:illustrations_forward}. Also note that when the true model is $\mathcal M_0$ and when $\tilde\theta_1$ is associated with $\mathcal M_1$,
then $$B^{(1)}_{im}(\tilde\theta_1)\stackrel{a.s.}{\longrightarrow}\left\{x^*_{i1}\right\},~\mbox{as}~m\rightarrow\infty,$$
where
\begin{equation}
	x^*_{i1}=\frac{\eta_0(x_i)-\tilde\alpha}{\tilde\beta}.
	\label{eq:x_star_i1}
\end{equation}
Similarly, when the true model is $\mathcal M_0$ and when $\tilde\theta_2$ is associated with $\mathcal M_2$, then
$$B^{(2)}_{im}(\tilde\theta_2)\stackrel{a.s.}{\longrightarrow}\left\{x^*_{i2}\right\},~\mbox{as}~m\rightarrow\infty,$$
where
\begin{equation}
	x^*_{i2}=\frac{\eta_0(x_i)-\tilde\alpha-\tilde\beta_2x^2_i}{\tilde\beta_1}.
	\label{eq:x_star_i2}
\end{equation}
Since $x^*_{i1}$ and $x^*_{i2}$ given by (\ref{eq:x_star_i1}) and (\ref{eq:x_star_i2})
are both continuous in $x_i$, the asymptotic calculations of $\frac{1}{n}\log\prod_{i=1}^nf(y_{ik}|\tilde\theta_1,x^*_{i1},\mathcal M_1)$
and $\frac{1}{n}\log\prod_{i=1}^nf(y_{ik}|\tilde\theta_1,x^*_{i2},\mathcal M_2)$ remain the same as $\frac{1}{n}\log\prod_{i=1}^nf(y_i|\tilde\theta_1,x_{i},\mathcal M_1)$
and $\frac{1}{n}\log\prod_{i=1}^nf(y_i|\tilde\theta_1,x_{i},\mathcal M_2)$, respectively, detailed in Section \ref{sec:illustrations_forward}. 
Hence, the final asymptotic results for IPBF remain the same for FPBF with respect to the models $\mathcal M_0$, $\mathcal M_1$ and $\mathcal M_2$.
Also note that here the cross-validation posterior for $\mathcal M_0$ is given by
\[\pi\left(y_{ik}|\bY_{nm,-i},\bX_{n,-i}\right)=\int_{\mathcal X}f(y_{ik}|\tilde x_i,\mathcal M_0)d\pi(\tilde x_i|\mathcal M_0)
\stackrel{a.s.}{\longrightarrow}f(y_{ik}|x_i),~\mbox{as}~m\rightarrow\infty,
\]
since $B^{(0)}_{im}(\eta_0)\stackrel{a.s.}{\longrightarrow}\left\{x_i\right\}$, as $m\rightarrow\infty$.
Hence, the final asymptotic results do not depend upon whether or not $x_i$ is considered known or the prior $\pi(\tilde x_i|\eta_0,\mathcal M_0)$ is used treating it as unknown,
when cross-validating for $\mathcal M_0$. Appealing to Theorem \ref{theorem:inverse_cons3}, Remark \ref{remark:inv_cv1} and Theorem \ref{theorem:inverse_cons3_mis} 
we thus summarize our results for
IPBF concerning $\mathcal M_0$, $\mathcal M_1$ and $\mathcal M_2$ as follows.

\begin{theorem}[{\bf IPBF convergence for linear regression}] 
	\label{theorem:linear_inverse}
Assume the setup where data $\left\{y_{ij}; i=1,\ldots,n; j=1,\ldots,m\right\}$ are available. In this setup consider the linear regression model
	$\mathcal M_1$ given by (\ref{eq:M_forward1}) and the true, non-linear model $\mathcal M_0$ given by (\ref{eq:M_forward2}). Let the parameter
space $\Theta$ associated with model $\mathcal M_1$ be $\mathbb R\times\mathbb R\times\mathbb R^+$, and let the covariate space $\mathcal X$ be compact.
	Assume the priors (\ref{eq:prior0}) and (\ref{eq:prior1}) on $\tilde x_i$ under the models $\mathcal M_0$ and $\mathcal M_1$, respectively. Then
	\begin{equation*}
	\underset{m\rightarrow\infty}{\lim}\underset{n\rightarrow\infty}{\lim}~\frac{1}{n}\log IPBF^{(n,m,k)}(\mathcal M_1,\mathcal M_0)\stackrel{a.s.}{=}-h_1(\tilde\theta_1),
	\end{equation*}
	where for $\theta\in\Theta$, $h_1(\theta)$ is given by (\ref{eq:h_linear_reg1}), and $\tilde\theta_1=\left(\tilde\alpha,\tilde\beta,{\tilde\sigma}^2_{\epsilon}\right)$, 
	where $\tilde\alpha$, $\tilde\beta$ and $\tilde\sigma^2_{\epsilon}$ are given by
	(\ref{eq:alpha2}), (\ref{eq:beta2}) and (\ref{eq:sigma2}), respectively. The result remains unchanged if $x_i$ is treated as known
for cross-validation with respect to $\mathcal M_0$.
\end{theorem}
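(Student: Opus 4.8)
The plan is to obtain the statement as a specialization of Theorem~\ref{theorem:inverse_cons3} with $\mathcal M=\mathcal M_1$, together with Remark~\ref{remark:inv_cv1} for the ``$x_i$ known'' clause (and Theorem~\ref{theorem:inverse_cons3_mis} if one later wants to compare $\mathcal M_1$ and $\mathcal M_2$ directly). All of the content lies in (i) verifying the hypotheses of Theorem~\ref{theorem:inverse_cons3} in this Gaussian linear/nonlinear setting and (ii) evaluating the limiting constant $h^*(\tilde\theta_1)$ in closed form.

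First I would dispatch the Shalizi conditions (S1)--(S7) for $\mathcal M_1$. With $m$ fixed and $n\to\infty$, the log-likelihood of $\{y_{ij}\}$ under $\mathcal M_1$, normalized by the true sample size $nm$, converges almost surely to exactly $-h_1(\theta)$ of (\ref{eq:h_linear_reg1}): writing $\tfrac1{nm}\sum_{i,j}(y_{ij}-\alpha-\beta x_i)^2$ as a Riemann sum converging to $|\mathcal X|^{-1}\int_{\mathcal X}(\eta_0(x)-\alpha-\beta x)^2\,dx$ plus a Kolmogorov-SLLN term converging to $\sigma^2_0$ plus a vanishing cross term, one is reduced to the single-replicate computation of Section~\ref{subsubsec:verification_forward1}. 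Hence that verification transfers verbatim: (S1) by measurability, (S2)--(S4) from the limit and finiteness of $h_1$, (S5)(1) from the sieves $\mathcal G_n$ of (\ref{eq:Gn}) and the prior moment assumptions, (S5)(2)--(3) and (S7) from compactness of $\mathcal X$, stochastic equicontinuity and the stochastic Ascoli lemma, and (S6) from Theorem~\ref{theorem:shalizi_Rn} since the finite-sample maximizer still converges almost surely to $\tilde\theta_1$. Therefore $\pi(\cdot\mid\bY_{nm,-i},\bX_{n,-i},\mathcal M_1)\stackrel{w}{\longrightarrow}\delta_{\tilde\theta_1}$ almost surely as $n\to\infty$ for every fixed $m$, with $\tilde\theta_1=(\tilde\alpha,\tilde\beta,\tilde\sigma^2_{\epsilon})$ the minimizer of $h_1$, unchanged from the forward case. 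The structural hypotheses of Theorem~\ref{theorem:inverse_cons3} are then immediate: $\Theta=\mathbb R\times\mathbb R\times\mathbb R^+$ is a complete separable metric space; $f(y_{ik}\mid\theta,\tilde x_i,\mathcal M_1)=N(y_{ik};\alpha+\beta\tilde x_i,\sigma^2_{\epsilon})$ and $f(y_{ik}\mid\tilde x_i,\mathcal M_0)=N(y_{ik};\eta_0(\tilde x_i),\sigma^2_0)$ are bounded and jointly continuous in their arguments on the relevant region; invertibility of $\eta_0$ and $\tilde\beta\neq 0$ make $\theta_0$ and $\tilde\theta_1$ one-to-one; and by Lemma~4.1 of \ctn{Chat20} the prior $\pi(\tilde x_i\mid\theta,\mathcal M_1)$ of (\ref{eq:prior1}) is continuous in $\theta$.

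Next I would identify the limiting concentration point of the prior supports. Under $y_{ij}=\eta_0(x_i)+\epsilon_{ij}$ one has $\bar y_i\stackrel{a.s.}{\longrightarrow}\eta_0(x_i)$ and $s_i\stackrel{a.s.}{\longrightarrow}\sigma_0$ as $m\to\infty$, so $B^{(1)}_{im}(\tilde\theta_1)\stackrel{a.s.}{\longrightarrow}\{x^*_{i1}\}$ with $x^*_{i1}=(\eta_0(x_i)-\tilde\alpha)/\tilde\beta$ as in (\ref{eq:x_star_i1}); the crucial consequence is $\tilde\alpha+\tilde\beta x^*_{i1}=\eta_0(x_i)$. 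Likewise $\eta_0$ continuous and invertible gives $B^{(0)}_{im}(\eta_0)\stackrel{a.s.}{\longrightarrow}\{x_i\}$, so the cross-validation density under the parameter-free model $\mathcal M_0$ tends to $f(y_{ik}\mid x_i,\mathcal M_0)$, which is also the limit obtained when $x_i$ is treated as known --- hence the final sentence of the theorem (Remark~\ref{remark:inv_cv1}). Feeding this into Theorem~\ref{theorem:inverse_cons3} (whose proof first sends $n\to\infty$, using the factorization $\pi(\tilde x_i,\theta\mid\cdot)=\pi(\tilde x_i\mid\theta,\mathcal M_1)\,\pi(\theta\mid\cdot)$ and the Portmanteau theorem, and then sends $m\to\infty$) yields $\lim_m\lim_n\tfrac1n\log IPBF^{(n,m,k)}(\mathcal M_1,\mathcal M_0)\stackrel{a.s.}{=}-h^*(\tilde\theta_1)$ with $h^*(\tilde\theta_1)=\lim_n\tfrac1n\sum_{i=1}^n\log\!\big[f(y_{ik}\mid\theta_0,x_i,\mathcal M_0)/f(y_{ik}\mid\tilde\theta_1,x^*_{i1},\mathcal M_1)\big]$.

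It remains to evaluate this constant. Because $f(y_{ik}\mid\tilde\theta_1,x^*_{i1},\mathcal M_1)=N(y_{ik};\eta_0(x_i),\tilde\sigma^2_{\epsilon})$ and $f(y_{ik}\mid\theta_0,x_i,\mathcal M_0)=N(y_{ik};\eta_0(x_i),\sigma^2_0)$, and since each $x^*_{i1}$ is continuous in $x_i$, the averages defining $h^*(\tilde\theta_1)$ are governed by the same Riemann-sum and Kolmogorov-SLLN arguments as in Section~\ref{sec:illustrations_forward} --- in particular $\tfrac1n\sum_{i=1}^n(y_{ik}-\eta_0(x_i))^2\stackrel{a.s.}{\longrightarrow}\sigma^2_0$ and $\tilde\sigma^2_{\epsilon}=\sigma^2_0+|\mathcal X|^{-1}\int_{\mathcal X}(\eta_0(x)-\tilde\alpha-\tilde\beta x)^2\,dx$ of (\ref{eq:sigma1}) --- and carrying them out reduces $h^*(\tilde\theta_1)$ to $h_1(\tilde\theta_1)$, with $h_1$ as in (\ref{eq:h_linear_reg1}) evaluated at the forward-case minimizer of Theorem~\ref{theorem:linreg}. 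I expect this final reconciliation of the inverse cross-validation constant with the forward expression $h_1(\tilde\theta_1)$ to be the step requiring the most care; everything else is an application of results already established.
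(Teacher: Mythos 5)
Your proposal follows the same route as the paper: specialize Theorem \ref{theorem:inverse_cons3} (and Remark \ref{remark:inv_cv1} for the ``$x_i$ known'' clause), re-verify Shalizi's conditions for $\mathcal M_1$ by reducing to the single-replicate computation of Section \ref{subsubsec:verification_forward1}, identify $B^{(1)}_{im}(\tilde\theta_1)\rightarrow\{x^*_{i1}\}$ with $x^*_{i1}=(\eta_0(x_i)-\tilde\alpha)/\tilde\beta$ and $B^{(0)}_{im}(\eta_0)\rightarrow\{x_i\}$, and then evaluate the resulting constant $h^*(\tilde\theta_1)$. You are in fact more explicit than the paper about the Shalizi verification and about why the $\mathcal M_0$ side is insensitive to whether $x_i$ is treated as known. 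Up to the last step, this is exactly the paper's argument.

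The gap is precisely the step you flag as ``requiring the most care'' and then assert: the identification $h^*(\tilde\theta_1)=h_1(\tilde\theta_1)$ does not follow from the computation you set up. As you correctly observe, $\tilde\alpha+\tilde\beta x^*_{i1}=\eta_0(x_i)$, so $f(y_{ik}\mid\tilde\theta_1,x^*_{i1},\mathcal M_1)=N\bigl(y_{ik};\eta_0(x_i),\tilde\sigma^2_{\epsilon}\bigr)$, i.e.\ the concentrated prior has corrected the mean misspecification and only the variance misspecification survives. Carrying out the average then gives
\begin{equation*}
h^*(\tilde\theta_1)=\tfrac{1}{2}\log\bigl(\tilde\sigma^2_{\epsilon}/\sigma^2_0\bigr)+\tfrac{\sigma^2_0}{2\tilde\sigma^2_{\epsilon}}-\tfrac{1}{2}
= h_1(\tilde\theta_1)-\frac{|\mathcal X|^{-1}}{2\tilde\sigma^2_{\epsilon}}\int_{\mathcal X}\bigl(\eta_0(x)-\tilde\alpha-\tilde\beta x\bigr)^2dx,
\end{equation*}
which equals $h_1(\tilde\theta_1)$ only when the residual term vanishes, i.e.\ when $\eta_0$ is itself linear --- contradicting the hypothesis that $\mathcal M_0$ is genuinely non-linear. (The paper's own justification, that the asymptotic calculation with $x^*_{i1}$ ``remains the same'' as with $x_i$ because $x^*_{i1}$ is continuous in $x_i$, is the same leap: continuity preserves the Riemann-sum machinery but changes the integrand, since $\tfrac{1}{n}\sum_i(y_{ik}-\tilde\alpha-\tilde\beta x^*_{i1})^2\rightarrow\sigma^2_0$ whereas $\tfrac{1}{n}\sum_i(y_{ik}-\tilde\alpha-\tilde\beta x_i)^2\rightarrow\sigma^2_0+E_X(\eta_0(X)-\tilde\alpha-\tilde\beta X)^2$.) So either the limiting constant in the statement should be the $h^*(\tilde\theta_1)$ displayed above rather than $h_1(\tilde\theta_1)$, or an additional argument is needed that you (and the paper) do not supply. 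Everything else in your write-up is sound; this final reconciliation is the one step that does not close.
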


\begin{theorem}[{\bf IPBF convergence for quadratic regression}] 
	\label{theorem:quadratic_inverse}
Assume the setup where data $\left\{y_{ij}; i=1,\ldots,n; j=1,\ldots,m\right\}$ are available. In this setup consider the quadratic regression model
	$\mathcal M_2$ given by (\ref{eq:M_forward3}) and the true, non-linear model $\mathcal M_0$ given by (\ref{eq:M_forward2}). Let the parameter
space $\Theta$ associated with model $\mathcal M_2$ be $\mathbb R\times\mathbb R\times\mathbb R\times\mathbb R^+$, and let the covariate space $\mathcal X$ be compact.
Also assume that $x_i;i\geq 1$ are all distinct.
	Assume the priors (\ref{eq:prior0}) and (\ref{eq:prior2}) on $\tilde x_i$ under the models $\mathcal M_0$ and $\mathcal M_2$, respectively. Then
	\begin{equation*}
	\underset{m\rightarrow\infty}{\lim}\underset{n\rightarrow\infty}{\lim}~\frac{1}{n}\log IPBF^{(n,m,k)}(\mathcal M_2,\mathcal M_0)\stackrel{a.s.}{=}-h_2(\tilde\theta_2),
	\end{equation*}
	where for $\theta\in\Theta$, $h_2(\theta)$ is given by (\ref{eq:h_quad_reg1}), and 
	$\tilde\theta_2=\left(\tilde\alpha,\tilde\beta_1,\tilde\beta_2,{\tilde\sigma}^2_{\epsilon}\right)$, 
	where $\tilde\alpha$, $\tilde\beta_1$, $\tilde\beta_2$ and $\tilde\sigma^2_{\epsilon}$ are given by
	(\ref{eq:ls_quad_true1}), (\ref{eq:A_b}) and (\ref{eq:sigma_quad}). The result remains unchanged if $x_i$ is treated as known
for cross-validation with respect to $\mathcal M_0$.
\end{theorem}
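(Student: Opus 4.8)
The plan is to obtain Theorem~\ref{theorem:quadratic_inverse} as a specialization of Theorem~\ref{theorem:inverse_cons3} (together with Remark~\ref{remark:inv_cv1}, which absorbs the option of treating $x_i$ as known for cross-validation under $\mathcal M_0$), while recycling almost verbatim the verification of Shalizi's conditions (S1)--(S7) for the forward quadratic model carried out in Section~\ref{subsec:quadreg}. Those conditions hold for $\mathcal M_2$ with $\Theta=\mathbb R\times\mathbb R\times\mathbb R\times\mathbb R^+$ and compact $\mathcal X$ precisely because the $x_i$ are assumed distinct, which makes the Vandermonde design matrices invertible and hence the maximizers of $\frac1n\log R_n(\theta)$ well defined and convergent to $\tilde\theta_2$; nothing in that argument is altered by passing to the inverse setup, since the relevant posterior $\pi(\theta\mid\bY_{nm,-i},\bX_{n,-i},\mathcal M_2)$ is the same object as in Theorem~\ref{theorem:forward_cons2} and still concentrates weakly at $\tilde\theta_2$.

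The genuinely new verification is twofold. First, Theorem~\ref{theorem:inverse_cons3} requires $f(y_{ik}\mid\theta,\tilde x_i,\mathcal M_2)$, and its $\mathcal M_0$ counterpart, to be bounded and continuous \emph{jointly} in $(\theta,\tilde x_i)$, not merely in $\theta$; but this density is the $N(\alpha+\beta_1\tilde x_i+\beta_2\tilde x_i^2,\sigma^2_\epsilon)$ density, and since $\tilde x_i$ ranges over the compact $\mathcal X$ while $\theta$ may be restricted to the compacta $\mathcal G_n$, joint boundedness and continuity are immediate; the $\mathcal M_0$ density $f(y_{ik}\mid x_i,\mathcal M_0)$ carries no parameters, and Remark~\ref{remark:inv_cv1} covers the alternative of keeping $x_i$ known there. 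Second, because $x\mapsto\alpha+\beta_1 x+\beta_2 x^2$ is not injective, the generic prior of Section~\ref{sec:prior} is inapplicable, and one uses the tailored prior \eqref{eq:prior2}--\eqref{eq:B2}; here I would check directly, via the strong law, that $\bar y_i\stackrel{a.s.}{\longrightarrow}\eta_0(x_i)$ and $s_i^2\stackrel{a.s.}{\longrightarrow}\sigma^2_0$ as $m\to\infty$, so that the interval $B^{(2)}_{im}(\tilde\theta_2)$, of width $2cs_i/(|\tilde\beta_1|\sqrt m)\to0$, shrinks almost surely to $\{x^*_{i2}\}$ with $x^*_{i2}$ as in \eqref{eq:x_star_i2}, while $B^{(0)}_{im}(\eta_0)\stackrel{a.s.}{\longrightarrow}\{x_i\}$ by invertibility of $\eta_0$. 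Crucially this shrinkage is the statement that an interval collapses to a point and needs no one-to-one hypothesis on $\tilde\theta_2$, which is exactly why the proof of Theorem~\ref{theorem:inverse_cons3} transfers here verbatim: the joint posterior of $(\tilde x_i,\theta)$ concentrates at $(x^*_{i2},\tilde\theta_2)$, and the Portmanteau theorem applied to the jointly bounded-continuous Gaussian density yields $\pi(y_{ik}\mid\bY_{nm,-i},\bX_{n,-i},\mathcal M_2)\stackrel{a.s.}{\longrightarrow}f(y_{ik}\mid\tilde\theta_2,x^*_{i2},\mathcal M_2)$ and $\pi(y_{ik}\mid\bY_{nm,-i},\bX_{n,-i},\mathcal M_0)\stackrel{a.s.}{\longrightarrow}f(y_{ik}\mid x_i,\mathcal M_0)$ as $m,n\to\infty$.

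It then remains to evaluate the limit, i.e.\ to identify
\[
h^*(\tilde\theta_2)=\lim_{n\to\infty}\frac1n\sum_{i=1}^n\log\frac{f(y_{ik}\mid x_i,\mathcal M_0)}{f(y_{ik}\mid\tilde\theta_2,x^*_{i2},\mathcal M_2)}
\]
with $h_2(\tilde\theta_2)$, where $h_2$ is given by \eqref{eq:h_quad_reg1} and $\tilde\theta_2=(\tilde\alpha,\tilde\beta_1,\tilde\beta_2,\tilde\sigma^2_\epsilon)$ by \eqref{eq:ls_quad_true1}--\eqref{eq:sigma_quad}. For this I would write $y_{ik}=\eta_0(x_i)+\epsilon_{ik}$, split the resulting quadratic form into $\frac1n\sum\epsilon_{ik}^2\stackrel{a.s.}{\longrightarrow}\sigma^2_0$, a cross term annihilated by Kolmogorov's strong law for independent non-identically distributed summands exactly as in \eqref{eq:E0}--\eqref{eq:slln1}, and a deterministic piece; since $x\mapsto x^*_2(x)$ is continuous on the compact $\mathcal X$, that piece is a Riemann sum converging to an integral over $\mathcal X$, and the claim (as asserted in the discussion preceding the theorem) is that this reproduces the forward expression $h_2(\tilde\theta_2)$. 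The comparison statement $\frac1n\log IPBF^{(n,m,k)}(\mathcal M_1,\mathcal M_2)$ against the third model then follows by additivity, through Theorem~\ref{theorem:inverse_cons3_mis}.

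I expect the crux to be this last identification of $h^*(\tilde\theta_2)$ with $h_2(\tilde\theta_2)$: one must argue that, although the quadratic predictor is evaluated at the inverse solution $x^*_{i2}$ rather than at $x_i$, the limiting integrated squared bias is unchanged, which requires making precise the algebraic relation between the inverse-fit bias $\eta_0(x)-\tilde\alpha-\tilde\beta_1 x^*_2(x)-\tilde\beta_2 x^*_2(x)^2$, the forward least-squares bias $\eta_0(x)-\tilde\alpha-\tilde\beta_1 x-\tilde\beta_2 x^2$, and the defining equations of $\tilde\theta_2$ and of $x^*_2$. Everything else is either already established in Section~\ref{subsec:quadreg} or a routine transcription of the arguments in the proof of Theorem~\ref{theorem:inverse_cons3}.
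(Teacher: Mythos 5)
Your route coincides with the paper's: the theorem is obtained by specializing Theorem \ref{theorem:inverse_cons3} (with Remark \ref{remark:inv_cv1} covering the case of known $x_i$ under $\mathcal M_0$), reusing the verification of (S1)--(S7) from Section \ref{subsec:quadreg} for the $\theta$-posterior, checking joint boundedness and continuity of the Gaussian density in $(\theta,\tilde x_i)$, and observing that the tailored prior support $B^{(2)}_{im}(\tilde\theta_2)$, an interval of width $2cs_i/(|\tilde\beta_1|\sqrt m)$, collapses almost surely to $\{x^*_{i2}\}$ so that the one-to-one hypothesis of Theorem \ref{theorem:inverse_cons3} is not needed. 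All of this matches the paper's discussion in Section \ref{sec:illustrations_inverse}, and your remark about why the injectivity hypothesis can be dropped is a correct and worthwhile clarification.

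However, the step you defer --- identifying $h^*(\tilde\theta_2)=\lim_n n^{-1}\sum_{i=1}^n\log\bigl[f(y_{ik}|x_i,\mathcal M_0)/f(y_{ik}|\tilde\theta_2,x^*_{i2},\mathcal M_2)\bigr]$ with the forward quantity $h_2(\tilde\theta_2)$ of (\ref{eq:h_quad_reg1}) --- is a genuine gap, and it is precisely the step the paper itself dispatches with the one-line assertion that, by continuity of $x^*_{i2}$ in $x_i$, ``the asymptotic calculations remain the same'' as in Section \ref{sec:illustrations_forward}. Continuity only yields existence of the Riemann-sum limit; it does not make the limiting value equal to $h_2(\tilde\theta_2)$, because the bias entering $h^*$ is the inverse-fit bias $\eta_0(x_i)-\tilde\alpha-\tilde\beta_1x^*_{i2}-\tilde\beta_2(x^*_{i2})^2$ rather than the forward bias $\eta_0(x_i)-\tilde\alpha-\tilde\beta_1x_i-\tilde\beta_2x_i^2$. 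The linear analogue shows the identification actually fails in general: by (\ref{eq:x_star_i1}) one has $\tilde\alpha+\tilde\beta x^*_{i1}=\eta_0(x_i)$ exactly, so the squared-bias contribution to $h^*$ vanishes and the limit computes to $\tfrac12\log(\tilde\sigma^2_\epsilon/\sigma^2_0)+\sigma^2_0/(2\tilde\sigma^2_\epsilon)-\tfrac12$, whereas substituting (\ref{eq:sigma1}) into (\ref{eq:h_linear_reg1}) gives $h_1(\tilde\theta_1)=\tfrac12\log(\tilde\sigma^2_\epsilon/\sigma^2_0)$; these agree only when the integrated squared bias $E_X(\eta_0(X)-\tilde\alpha-\tilde\beta X)^2$ is zero. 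The quadratic case has the same problem, with the extra wrinkle that the model mean at $x^*_{i2}$ is $\tilde\alpha+\tilde\beta_1x^*_{i2}+\tilde\beta_2(x^*_{i2})^2$, which differs from $\eta_0(x_i)$ because the prior (\ref{eq:B2}) freezes the quadratic term at $x_i$ while the likelihood does not. So the ``algebraic relation'' you flag as the crux must actually be exhibited, and as stated it appears not to hold; your proposal is incomplete at exactly the point where the paper's own argument is an unproved assertion, and closing it would require either a corrected limit (an $h^*$ distinct from $h_2$, as in Theorems \ref{theorem:inverse_cons2} and \ref{theorem:ar1_1}--\ref{theorem:ar1_3}) or an additional hypothesis forcing the two bias terms to agree.
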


\begin{theorem}[{\bf Comparison between linear and quadratic regressions}] 
	\label{theorem:linear_vs_quadratic_inverse}
Assume the setup where data $\left\{y_{ij}; i=1,\ldots,n; j=1,\ldots,m\right\}$ are available. 
	Let the true model be given by $\mathcal M_0$ formulated in (\ref{eq:M_forward2}). Assuming that the covariate observations $x_i;i\geq 1$ are all distinct
	and that the covariate space $\mathcal X$ is compact, consider comparison of the linear and quadratic
	regression models $\mathcal M_1$ and $\mathcal M_2$ given by (\ref{eq:M_forward1}) and (\ref{eq:M_forward3}), respectively, using IPBF. Assume the
	priors (\ref{eq:prior0}), (\ref{eq:prior1}) and (\ref{eq:prior2}) on $\tilde x_i$ under the models $\mathcal M_0$, $\mathcal M_1$ and $\mathcal M_2$, respectively. Then,
	\begin{equation*}
		\underset{m\rightarrow\infty}{\lim}\underset{n\rightarrow\infty}{\lim}~\frac{1}{n}\log IPBF^{(n,m,k)}(\mathcal M_1,\mathcal M_2)\stackrel{a.s.}{=}
		-\left(h_1(\tilde\theta_1)-h_2(\tilde\theta_2)\right)
	\end{equation*}
	where $h_1(\tilde\theta_1)$ and $h_2(\tilde\theta_2)$ are the same as in Theorems \ref{theorem:linear_inverse} and \ref{theorem:quadratic_inverse}, respectively. 
	The result remains unchanged if $\tilde x_i$ is treated as known for cross-validation with respect to $\mathcal M_0$.
\end{theorem}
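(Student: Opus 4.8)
The plan is to obtain this result as an immediate corollary of Theorems~\ref{theorem:linear_inverse} and~\ref{theorem:quadratic_inverse}, in exactly the way Theorem~\ref{theorem:linear_vs_quadreg1} was deduced from Theorems~\ref{theorem:linreg} and~\ref{theorem:quadreg1} in the forward setting. The key observation is that the cross-validation posteriors attached to the common ``denominator'' model $\mathcal M_0$ cancel when $\mathcal M_1$ is compared against $\mathcal M_2$ directly, so that for every $k\geq 1$,
\begin{equation*}
\frac{1}{n}\log IPBF^{(n,m,k)}(\mathcal M_1,\mathcal M_2)
=\frac{1}{n}\log IPBF^{(n,m,k)}(\mathcal M_1,\mathcal M_0)
-\frac{1}{n}\log IPBF^{(n,m,k)}(\mathcal M_2,\mathcal M_0).
\end{equation*}

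First I would check that the hypotheses assumed here --- the $\{y_{ij}\}$ setup, compactness of $\mathcal X$, distinctness of the $x_i$, the parameter spaces $\mathbb R\times\mathbb R\times\mathbb R^+$ and $\mathbb R\times\mathbb R\times\mathbb R\times\mathbb R^+$ for $\mathcal M_1$ and $\mathcal M_2$, and the priors (\ref{eq:prior0}), (\ref{eq:prior1}), (\ref{eq:prior2}) on $\tilde x_i$ --- are precisely those under which Theorems~\ref{theorem:linear_inverse} and~\ref{theorem:quadratic_inverse} have already been established. Consequently each term on the right-hand side of the displayed identity possesses an almost sure iterated limit: by Theorem~\ref{theorem:linear_inverse}, $\lim_{m\to\infty}\lim_{n\to\infty}\frac{1}{n}\log IPBF^{(n,m,k)}(\mathcal M_1,\mathcal M_0)\stackrel{a.s.}{=}-h_1(\tilde\theta_1)$, and by Theorem~\ref{theorem:quadratic_inverse}, $\lim_{m\to\infty}\lim_{n\to\infty}\frac{1}{n}\log IPBF^{(n,m,k)}(\mathcal M_2,\mathcal M_0)\stackrel{a.s.}{=}-h_2(\tilde\theta_2)$. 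On the intersection of the two underlying almost sure events the inner limits in $n$ and then the outer limits in $m$ of the difference may be taken term by term, which yields $-\left(h_1(\tilde\theta_1)-h_2(\tilde\theta_2)\right)$, as claimed.

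For the closing assertion --- that the conclusion does not change if $x_i$ is treated as known when cross-validating under $\mathcal M_0$ --- I would reuse the observation recorded just before Theorem~\ref{theorem:linear_inverse}: since $\eta_0$ is one-to-one and continuous, $B^{(0)}_{im}(\eta_0)\stackrel{a.s.}{\longrightarrow}\{x_i\}$ as $m\to\infty$, so $\pi(y_{ik}|\bY_{nm,-i},\bX_{n,-i},\mathcal M_0)\stackrel{a.s.}{\longrightarrow}f(y_{ik}|x_i,\mathcal M_0)$ regardless of whether $x_i$ is regarded as known; hence the $\mathcal M_0$ contribution to both terms of the decomposition, and therefore the limit itself, is identical in either case (cf.\ Remark~\ref{remark:inv_cv1}). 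An alternative route would be to invoke Theorem~\ref{theorem:inverse_cons3_mis} directly, obtaining the limit $-\left(h^*_1(\tilde\theta_1)-h^*_2(\tilde\theta_2)\right)$ with $h^*_j$ as in (\ref{eq:h_inverse3_mis}), and then to identify $h^*_j(\tilde\theta_j)=h_j(\tilde\theta_j)$ by noting that $x^*_{i1}$ and $x^*_{i2}$ in (\ref{eq:x_star_i1}) and (\ref{eq:x_star_i2}) are continuous in $x_i$, so the Riemann-sum and Kolmogorov strong-law computations of Section~\ref{sec:illustrations_forward} go through verbatim with $x_i$ replaced by $x^*_{ij}$.

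There is no substantive obstacle in this argument; the only point demanding any care is the bookkeeping for the iterated limits $\lim_{m\to\infty}\lim_{n\to\infty}$ under subtraction. This is harmless because Theorems~\ref{theorem:linear_inverse} and~\ref{theorem:quadratic_inverse} already guarantee, for each comparison model separately, that the inner limit in $n$ exists almost surely and that the resulting random function of $m$ converges almost surely as $m\to\infty$; no interchange of the two limits is ever required, and the difference is handled on a single almost sure event.
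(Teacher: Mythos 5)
Your proposal is correct and follows essentially the same route as the paper: the paper obtains this theorem by appealing to the decomposition $\frac{1}{n}\log IPBF^{(n,m,k)}(\mathcal M_1,\mathcal M_2)=\frac{1}{n}\log IPBF^{(n,m,k)}(\mathcal M_1,\mathcal M_0)-\frac{1}{n}\log IPBF^{(n,m,k)}(\mathcal M_2,\mathcal M_0)$ together with Theorems \ref{theorem:linear_inverse} and \ref{theorem:quadratic_inverse} (equivalently, Theorem \ref{theorem:inverse_cons3_mis} plus the identification $h^*_j(\tilde\theta_j)=h_j(\tilde\theta_j)$ via continuity of $x^*_{i1},x^*_{i2}$ in $x_i$), and it disposes of the ``$x_i$ known'' variant exactly as you do, via $B^{(0)}_{im}(\eta_0)\stackrel{a.s.}{\longrightarrow}\{x_i\}$ and Remark \ref{remark:inv_cv1}. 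Nothing further is needed.
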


\subsection{IPBF asymptotics for variable selection in AR(1)}
\label{subsec:ar1_inverse}
Now let us reconsider the AR(1) regression setup described by the competing models $\mathcal M_1$ (\ref{eq:ar1_x}), $\mathcal M_2$ (\ref{eq:ar1_z})
and the true model $\mathcal M_0$ (\ref{eq:ar1_both}), along with assumptions (A1) and (A2).
But now we reformulate the models as follows to suit the second setup of inverse regression.
\begin{equation}
	y_{tj}=\rho_1 y_{t-1,j}+\beta_1 x_t+\epsilon_{1t,j};~t=1,\ldots,n;~j=1,\ldots,m,
	\label{eq:ar1_x_inv}
\end{equation}
where $y_{0j}\equiv 0$ for $j=1,\ldots,m$ and 
$\epsilon_{1t,j}\stackrel{iid}{\sim}N\left(0,\sigma^2_{1}\right)$, for $t=1,\ldots,n$ and $j=1,\ldots,m$. 
Similarly, $\mathcal M_2$ is now given by
\begin{equation}
	y_{tj}=\rho_2 y_{t-1,j}+\beta_2 z_t+\epsilon_{2t,j};~t=1,\ldots,n;~j=1,\ldots,m,
	\label{eq:ar1_z_inv}
\end{equation}
where 
$\epsilon_{2t,j}\stackrel{iid}{\sim}N\left(0,\sigma^2_{2}\right)$, for $t=1,\ldots,n$ and $j=1,\ldots,m$.  

The true model $\mathcal M_0$ be given by
\begin{equation}
	y_{tj}=\rho_0 y_{t-1,j}+\beta_0 (x_t+z_t)+\epsilon_{0t,j};~t=1,\ldots,n;~j=1,\ldots,m,
	\label{eq:ar1_both_inv}
\end{equation}
where $|\rho_0|<1$ and $\epsilon_{0t,j}\stackrel{iid}{\sim}N\left(0,\sigma^2_{0}\right)$, for $t=1,\ldots,n$ and $j=1,\ldots,m$.

For $t=1,\ldots,n$, let $\bar y_t=\frac{\sum_{j=1}^my_{tj}}{m}$ and $s^2_t(\rho)=\frac{1}{m}\left[(y_{tj}-\bar y_t)-\rho (y_{t-1,j}-\bar y_{t-1})\right]^2$. 
We consider the following priors for $\tilde x_t$ and $\tilde z_t$ 
associated with $\mathcal M_1$ and $\mathcal M_2$: 
\begin{align}
	&\pi(\tilde x_t|\theta_1,\mathcal M_1)\equiv U\left(B^{(1)}_{tm}(\theta_1)\right);
	\label{eq:ar1_prior1}\\
	&\pi(\tilde z_t|\theta_2,\mathcal M_2)\equiv U\left(B^{(2)}_{tm}(\theta_1)\right),
	\label{eq:ar1_prior2}
\end{align}
where
\begin{align}
	&B^{(1)}_{tm}(\theta_1)=\left[\frac{\bar y_t-\rho_1\bar y_{t-1}}{\beta_1}-\frac{cs_t(\rho_1)}{|\beta_1|\sqrt{m}},
	\frac{\bar y_t-\rho_1\bar y_{t-1}}{\beta_1}+\frac{cs_t(\rho_1)}{|\beta_1|\sqrt{m}}\right];
	\label{eq:B1_ar1_inv}\\
	&B^{(2)}_{tm}(\theta_2)=\left[\frac{\bar y_t-\rho_2\bar y_{t-1}}{\beta_2}-\frac{cs_t(\rho_2)}{|\beta_2|\sqrt{m}},
	\frac{\bar y_t-\rho_2\bar y_{t-1}}{\beta_2}+\frac{cs_t(\rho_2)}{|\beta_2|\sqrt{m}}\right].
	\label{eq:B2_ar1_inv}
\end{align}

Note that
\begin{align}
	&B^{(1)}_{tm}(\tilde\theta_1)\stackrel{a.s.}{\longrightarrow}\{x^*_t\}~\mbox{as}~m\rightarrow\infty;\label{eq:xt_star}\\
	&B^{(2)}_{tm}(\tilde\theta_2)\stackrel{a.s.}{\longrightarrow}\{z^*_t\}~\mbox{as}~m\rightarrow\infty,\label{eq:zt_star}
\end{align}
where
\begin{align}
	&x^*_t=\frac{\beta_0\sum_{k=1}^t\rho^{t-k}x_k-\beta_0\tilde\rho_1\sum_{k=1}^{t-1}\rho^{t-k}_0x_k}{\tilde\beta_1};\label{eq:xt_star2}\\
	&z^*_t=\frac{\beta_0\sum_{k=1}^t\rho^{t-k}z_k-\beta_0\tilde\rho_2\sum_{k=1}^{t-1}\rho^{t-k}_0z_k}{\tilde\beta_2}.\label{eq:zt_star2}
\end{align}
Direct calculations reveal that
\begin{align}
	&\frac{1}{n}\sum_{t=1}^nx^*_t\rightarrow 0;~\frac{1}{n}\sum_{t=1}^n{x^*_t}^2\rightarrow \sigma^2_{x^*}
	=\sigma^2_x\frac{\beta^2_0(1-\tilde\rho_1)^2}{\tilde\beta^2_1(1-\rho^2_0)},~\mbox{as}~n\rightarrow\infty;\label{eq:moments_x}\\
	&\frac{1}{n}\sum_{t=1}^nz^*_t\rightarrow 0;~\frac{1}{n}\sum_{t=1}^n{z^*_t}^2\rightarrow \sigma^2_{z^*}
	=\sigma^2_z\frac{\beta^2_0(1-\tilde\rho_2)^2}{\tilde\beta^2_2(1-\rho^2_0)},~\mbox{as}~n\rightarrow\infty.\label{eq:moments_z}
\end{align}
Hence, for the final IPBF calculations associated with $h_1$ and $h_2$ for this example, we need to replace $x_t$, $z_t$, $\sigma^2_x$ and $\sigma^2_z$ in (A1)
with $x^*_t$, $z^*_t$, $\sigma^2_{x^*}$ and $\sigma^2_{z^*}$, respectively, for models $\mathcal M_1$ and $\mathcal M_2$.
In this regard, let
	\begin{multline}
	h^*_1(\theta)=\log\left(\frac{\sigma}{\sigma_0}\right)+\left(\frac{1}{2\sigma^2}-\frac{1}{2\sigma^2_0}\right)\left(\frac{\sigma^2_0}{1-\rho^2_0}
		+\frac{\beta^2_0\sigma^2_{x+z}}{1-\rho^2_0}\right)
		+\left(\frac{\rho^2}{2\sigma^2}-\frac{\rho^2_0}{2\sigma^2_0}\right)\left(\frac{\sigma^2_0}{1-\rho^2_0}+\frac{\beta^2_0\sigma^2_{x+z}}{1-\rho^2_0}\right)\\
		+\frac{1}{2\sigma^2}\beta^2\sigma^2_{x+z}-
		\frac{1}{2\sigma^2_0}\beta^2_0\sigma^2_{x+z}
		-\left(\frac{\rho}{\sigma^2}-\frac{\rho_0}{\sigma^2_0}\right)\left(\frac{\rho_0\sigma^2_0}{1-\rho^2_0}+\frac{\rho_0\beta^2_0\sigma^2_{x+z}}{1-\rho^2_0}\right)
		-\left(\frac{\beta}{\sigma^2}-\frac{\beta_0}{\sigma^2_0}\right)\sigma^2_{x+z}\beta_0\\
		+\frac{\sigma^2_z\beta(\beta_0-\beta)}{\sigma^2}+\frac{\beta^2}{2\sigma^2}\left(\sigma^2_{x+z}+\sigma^2_{x^*}-\frac{2\beta_0\sigma^2_x}{\tilde\beta_1}\right).
		\label{eq:h1_star_ar1}
	\end{multline}
and
	\begin{multline}
	h^*_2(\theta)=\log\left(\frac{\sigma}{\sigma_0}\right)+\left(\frac{1}{2\sigma^2}-\frac{1}{2\sigma^2_0}\right)\left(\frac{\sigma^2_0}{1-\rho^2_0}
		+\frac{\beta^2_0\sigma^2_{x+z}}{1-\rho^2_0}\right)
		+\left(\frac{\rho^2}{2\sigma^2}-\frac{\rho^2_0}{2\sigma^2_0}\right)\left(\frac{\sigma^2_0}{1-\rho^2_0}+\frac{\beta^2_0\sigma^2_{x+z}}{1-\rho^2_0}\right)\\
		+\frac{1}{2\sigma^2}\beta^2\sigma^2_{x+z}-
		\frac{1}{2\sigma^2_0}\beta^2_0\sigma^2_{x+z}
		-\left(\frac{\rho}{\sigma^2}-\frac{\rho_0}{\sigma^2_0}\right)\left(\frac{\rho_0\sigma^2_0}{1-\rho^2_0}+\frac{\rho_0\beta^2_0\sigma^2_{x+z}}{1-\rho^2_0}\right)
		-\left(\frac{\beta}{\sigma^2}-\frac{\beta_0}{\sigma^2_0}\right)\sigma^2_{x+z}\beta_0\\
		+\frac{\sigma^2_x\beta(\beta_0-\beta)}{\sigma^2}+\frac{\beta^2}{2\sigma^2}\left(\sigma^2_{x+z}+\sigma^2_{z^*}-\frac{2\beta_0\sigma^2_z}{\tilde\beta_2}\right).
		\label{eq:h2_star_ar1}
	\end{multline}

If cross-validation is considered with respect to the true model $\mathcal M_0$ with a prior on the covariates, then since $x_t$ and $z_t$ are not separately 
identifiable in $\mathcal M_0$, let $u_t=x_t+z_t$ and consider a prior on $\tilde u_t$ as follows:
\begin{equation}
	\pi(\tilde u_t|\theta_0,\mathcal M_0)\equiv U\left(B^{(0)}_{tm}(\theta_0)\right),
	\label{eq:ar1_prior0}
\end{equation}
where
\begin{equation}
	B^{(0)}_{tm}(\theta_0)=\left[\frac{\bar y_t-\rho_0\bar y_{t-1}}{\beta_0}-\frac{cs_t(\rho_0)}{|\beta_0|\sqrt{m}},
	\frac{\bar y_t-\rho_1\bar y_{t-1}}{\beta_0}+\frac{cs_t(\rho_0)}{|\beta_0|\sqrt{m}}\right].
	\label{eq:B0_ar1_inv}
\end{equation}
Note that $B^{(0)}_{tm}(\theta_0)\stackrel{a.s.}{\longrightarrow}\{u_t\}$, as $m\rightarrow\infty$. Let $\bU_{n,-t}=\{u_1,\ldots,u_n\}\backslash\{u_t\}$. 
As before, it follows that $\pi\left(y_{tk}|\bY_{nm,-t},\bU_{n,-t}\right)\stackrel{a.s.}{\longrightarrow}f(y_{tk}|u_t,y_{t-1,k})$, as $m\rightarrow\infty$.
Hence, the final asymptotic results do not depend upon whether or not $u_t$ is considered known or the prior (\ref{eq:ar1_prior0}) is used for $\tilde u_t$ 
treating $u_t$ it as unknown, when cross-validating for the true model $\mathcal M_0$.

We summarize our results on variable selection in the inverse AR(1) regression framework as follows.
\begin{theorem}[{\bf IPBF consistency for $\mathcal M_1$ versus $\mathcal M_0$}] 
	\label{theorem:ar1_1}
	Consider comparing model $\mathcal M_1$ (\ref{eq:ar1_x_inv}) against the true model $\mathcal M_0$ (\ref{eq:ar1_both_inv}).
	Assume the priors (\ref{eq:ar1_prior1}) and (\ref{eq:ar1_prior0}) on $\tilde x_t$ and $\tilde u_t$ 
	under the models $\mathcal M_1$ and $\mathcal M_0$, respectively. Then
	\begin{equation*}
	\underset{m\rightarrow\infty}{\lim}\underset{n\rightarrow\infty}{\lim}~\frac{1}{n}\log IPBF^{(n,m,k)}(\mathcal M_1,\mathcal M_0)\stackrel{a.s.}{=}-h^*_1(\tilde\theta_1),
	\end{equation*}
	where for $\theta\in\Theta_1$, $h^*_1(\theta)$ is given by (\ref{eq:h1_star_ar1}), and 
	$\tilde\theta_1$ is the unique minimizer of $h_1$ given by (\ref{eq:h1_ar1}). 
	The result remains unchanged if $u_t$ is treated as known for cross-validation with respect to $\mathcal M_0$.
\end{theorem}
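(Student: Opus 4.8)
The plan is to deduce the statement from Theorem~\ref{theorem:inverse_cons3} (together with Remark~\ref{remark:inv_cv1}) applied to the pair $(\mathcal M_1,\mathcal M_0)$ in the present AR(1) setup, and then to carry out the explicit Ces\`aro-limit computation identifying the limiting constant with $h^*_1(\tilde\theta_1)$ of (\ref{eq:h1_star_ar1}).

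First I would check the hypotheses of Theorem~\ref{theorem:inverse_cons3}. Conditions (S1)--(S7) of Shalizi for $\mathcal M_1$ were established in Section~\ref{subsec:ar1_forward}; since the replicate responses $y_{tj}$, $j=1,\ldots,m$, are conditionally independent given $(\theta,x_t)$ with the same per-replicate density, for each fixed $m$ the likelihood ratio and its limiting behaviour are as before (with $h_1$ merely scaled by $m$, hence with the same minimizer $\tilde\theta_1$), so (S1)--(S7) continue to hold. The spaces $\Theta_1=\Theta_0=\mathbb R\times\mathbb R\times\mathbb R^+$ are complete separable; the Gaussian densities $f(y_{tk}|\theta,\tilde x_t,y_{t-1,k},\mathcal M_1)$ and $f(y_{tk}|\theta,\tilde x_t,y_{t-1,k},\mathcal M_0)$ are continuous in $(\theta,\tilde x_t)$ and, restricted to a neighbourhood of $\tilde\theta_1$ (resp.\ $\theta_0$), bounded, which is all the Portmanteau step in that theorem requires; the map $x\mapsto\beta_1 x$ underlying the prior (\ref{eq:ar1_prior1}) is one-to-one because $\tilde\beta_1\neq0$ (which I would note follows from the explicit minimizer of $h_1$ in (\ref{eq:h1_ar1}) under (A1)); and (\ref{eq:xt_star})--(\ref{eq:xt_star2}) supply $B^{(1)}_{tm}(\tilde\theta_1)\stackrel{a.s.}{\longrightarrow}\{x^*_t\}$. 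For $\mathcal M_0$, whose effective covariate is $u_t=x_t+z_t$, I would invoke the observation recorded just before the theorem, namely $B^{(0)}_{tm}(\theta_0)\stackrel{a.s.}{\longrightarrow}\{u_t\}$, so that $\pi(y_{tk}|\bY_{nm,-t},\bU_{n,-t},\mathcal M_0)\stackrel{a.s.}{\longrightarrow} f(y_{tk}|u_t,y_{t-1,k})$; this is exactly the content of Remark~\ref{remark:inv_cv1} and shows the limit is the same whether $u_t$ is treated as known or the prior (\ref{eq:ar1_prior0}) is used for $\tilde u_t$.

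Granting these, Theorem~\ref{theorem:inverse_cons3} gives
\begin{equation*}
\underset{m\rightarrow\infty}{\lim}\underset{n\rightarrow\infty}{\lim}~\frac{1}{n}\log IPBF^{(n,m,k)}(\mathcal M_1,\mathcal M_0)\stackrel{a.s.}{=}-\underset{n\rightarrow\infty}{\lim}~\frac{1}{n}\sum_{t=1}^n\log\left[\frac{f(y_{tk}|\theta_0,u_t,y_{t-1,k},\mathcal M_0)}{f(y_{tk}|\tilde\theta_1,x^*_t,y_{t-1,k},\mathcal M_1)}\right],
\end{equation*}
and it remains to identify this limit with $h^*_1(\tilde\theta_1)$ in (\ref{eq:h1_star_ar1}). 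To do so I would substitute the Gaussian log-densities and the data-generating relation $y_{tk}=\rho_0 y_{t-1,k}+\beta_0 u_t+\epsilon_{0t,k}$, so that $y_{tk}-\tilde\rho_1 y_{t-1,k}-\tilde\beta_1 x^*_t=(\rho_0-\tilde\rho_1)y_{t-1,k}+(\beta_0 u_t-\tilde\beta_1 x^*_t)+\epsilon_{0t,k}$, expand the square, and take Ces\`aro averages. Here I would use $\tfrac1n\sum\epsilon_{0t,k}^2\stackrel{a.s.}{\longrightarrow}\sigma^2_0$, $\tfrac1n\sum y_{t-1,k}^2\stackrel{a.s.}{\longrightarrow}(\sigma^2_0+\beta^2_0\sigma^2_{x+z})/(1-\rho^2_0)$, vanishing of every average containing a single factor $\epsilon_{0t,k}$ by Kolmogorov's strong law for independent non-identically distributed summands, together with the moment limits (A1), (\ref{eq:moments_x}) and the cross-moment limits of $\tfrac1n\sum x^*_t u_t$, $\tfrac1n\sum y_{t-1,k}x^*_t$, $\tfrac1n\sum y_{t-1,k}u_t$ obtained by plugging in the closed form (\ref{eq:xt_star2}) of $x^*_t$. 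The bookkeeping mirrors that of $h_1$ in Section~\ref{subsec:ar1_forward} and \ctn{Chandra20}, with $x_t$ and $\sigma^2_x$ replaced by $x^*_t$ and $\sigma^2_{x^*}$ and the extra cross term $\tfrac1n\sum x^*_t(x_t+z_t)$ tracked explicitly; collecting terms yields (\ref{eq:h1_star_ar1}).

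The main obstacle I anticipate is not conceptual but organizational: assembling the many Ces\`aro cross-moments---in particular those coupling $x^*_t$ with $u_t=x_t+z_t$ and with the lagged response $y_{t-1,k}$---so that they aggregate into the compact closed form (\ref{eq:h1_star_ar1}) rather than a messier expression. A secondary point requiring care is the verification that the additional boundedness/continuity and one-to-one hypotheses of Theorem~\ref{theorem:inverse_cons3} genuinely transfer from the single-replicate forward analysis of Section~\ref{subsec:ar1_forward} to the present $m$-replicate inverse formulation; once $\tilde\beta_1\neq0$ is recorded this is routine.
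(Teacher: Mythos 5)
Your proposal is correct and follows essentially the same route as the paper: the theorem is presented there as a direct consequence of Theorem \ref{theorem:inverse_cons3} and Remark \ref{remark:inv_cv1}, with the Shalizi conditions carried over from the forward AR(1) analysis, the limits $B^{(1)}_{tm}(\tilde\theta_1)\stackrel{a.s.}{\longrightarrow}\{x^*_t\}$ and $B^{(0)}_{tm}(\theta_0)\stackrel{a.s.}{\longrightarrow}\{u_t\}$, and the Ces\`aro bookkeeping that replaces $x_t,\sigma^2_x$ by $x^*_t,\sigma^2_{x^*}$ to produce (\ref{eq:h1_star_ar1}). Your additional remarks on where boundedness of the Gaussian densities and the condition $\tilde\beta_1\neq 0$ enter are consistent with, and slightly more explicit than, the paper's implicit treatment.
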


\begin{theorem}[{\bf IPBF consistency for $\mathcal M_2$ versus $\mathcal M_0$}] 
	\label{theorem:ar1_2}
	Consider comparing model $\mathcal M_2$ (\ref{eq:ar1_z_inv}) against the true model $\mathcal M_0$ (\ref{eq:ar1_both_inv}).
	Assume the priors (\ref{eq:ar1_prior2}) and (\ref{eq:ar1_prior0}) on $\tilde z_t$ and $\tilde u_t$ 
	under the models $\mathcal M_2$ and $\mathcal M_0$, respectively. Then
	\begin{equation*}
	\underset{m\rightarrow\infty}{\lim}\underset{n\rightarrow\infty}{\lim}~\frac{1}{n}\log IPBF^{(n,m,k)}(\mathcal M_1,\mathcal M_0)\stackrel{a.s.}{=}-h^*_2(\tilde\theta_2),
	\end{equation*}
	where for $\theta\in\Theta_2$, $h^*_2(\theta)$ is given by (\ref{eq:h2_star_ar1}), and 
	$\tilde\theta_2$ is the unique minimizer of $h_2$ given by (\ref{eq:h2_ar1}). 
	The result remains unchanged if $u_t$ is treated as known for cross-validation with respect to $\mathcal M_0$.
\end{theorem}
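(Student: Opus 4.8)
The plan is to deduce Theorem~\ref{theorem:ar1_2} from the abstract second-setup result, Theorem~\ref{theorem:inverse_cons3} (together with the discussion preceding it and Remark~\ref{remark:inv_cv1}), exactly as Theorem~\ref{theorem:ar1_1} was obtained for $\mathcal M_1$ versus $\mathcal M_0$; indeed the whole argument is the one for Theorem~\ref{theorem:ar1_1} with the roles of $x$ and $z$ (and of $\mathcal M_1$ and $\mathcal M_2$) interchanged, so most of it is a matter of transcribing Sections~\ref{subsec:ar1_forward} and \ref{subsec:ar1_inverse}. The two substantive points are (i) checking that $\mathcal M_2$ and $\mathcal M_0$ satisfy the hypotheses of Theorem~\ref{theorem:inverse_cons3} under (A1)--(A2), and (ii) showing that the abstract limiting rate $h^*(\tilde\theta_2)$ produced by that theorem coincides with the explicit expression $h^*_2(\tilde\theta_2)$ in (\ref{eq:h2_star_ar1}).

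For (i): with the data now the $m$ replicate series $\{y_{tj}\}$, leaving out $\by_i$ still leaves $(n-1)m$ observations, and $\tfrac1n\log R^{(2)}_n(\theta)$ for the replicated model is $m$ times a Ces\`aro average over $t$ of the per-time contributions. Hence the pointwise a.s.\ limit $-h_2(\theta)$ of (\ref{eq:pointconv2}), stochastic equicontinuity on compacts, the sieve $\mathcal G^{(2)}_n$ of (\ref{eq:Gn_ar1}), asymptotic concavity (Theorem~\ref{th:concave}), and uniqueness/consistency of the maximizer towards $\tilde\theta_2$ (Theorem~\ref{th:uniroot}) all transfer verbatim from Section~\ref{subsec:ar1_forward}; $\tilde\theta_2=(\tilde\rho_2,\tilde\beta_2,\tilde\sigma^2_2)$, the unique minimizer of $h_2$ in (\ref{eq:h2_ar1}), is unaffected by $m$ since (A1)--(A2) do not involve $m$. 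So (S1)--(S7) hold for $\mathcal M_2$, and likewise for the well-specified $\mathcal M_0$, whose posterior concentrates at $\theta_0$. The parameter spaces $\Theta_2=\Theta_0=\mathbb R\times\mathbb R\times\mathbb R^+$ are complete and separable; the Gaussian densities $f(y_{ik}|\theta,\tilde x_i,y_{i-1,k},\mathcal M_j)$ are bounded and continuous in $(\theta,\tilde x_i)$ on compact subsets of $\Theta_j\times\mathcal X$, taking $\mathcal X$ a compact interval large enough to contain every $z_t$ and the limiting points $z^*_t$ (finite because $|\rho_0|<1$ and (A2) holds); and the one-to-oneness required of the map defining $B^{(2)}_{tm}$ reduces to $\tilde\beta_2\neq 0$, which holds because $z$ enters the true mean through the term $\beta_0 z_t$. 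Applying Theorem~\ref{theorem:inverse_cons3}, and using that $B^{(0)}_{tm}(\theta_0)\stackrel{a.s.}{\longrightarrow}\{u_t\}$ so cross-validation under $\mathcal M_0$ with prior (\ref{eq:ar1_prior0}) yields the same limit $f(y_{tk}|\theta_0,u_t,y_{t-1,k},\mathcal M_0)$ as treating $u_t$ known (Remark~\ref{remark:inv_cv1}), we get
\[
\underset{m\rightarrow\infty}{\lim}\underset{n\rightarrow\infty}{\lim}~\frac{1}{n}\log IPBF^{(n,m,k)}(\mathcal M_2,\mathcal M_0)\stackrel{a.s.}{=}
-\underset{n\rightarrow\infty}{\lim}~\frac{1}{n}\sum_{t=1}^n\log\frac{f(y_{tk}|\theta_0,u_t,y_{t-1,k},\mathcal M_0)}{f(y_{tk}|\tilde\theta_2,z^*_t,y_{t-1,k},\mathcal M_2)},
\]
with $z^*_t$ the $m\to\infty$ limit of the centre of $B^{(2)}_{tm}(\tilde\theta_2)$, which by the strong law over the $m$ replicates is (\ref{eq:zt_star2}).

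For (ii), I would write out the two Gaussian log-densities and evaluate
\[
\underset{n\rightarrow\infty}{\lim}~\frac{1}{n}\sum_{t=1}^n\left[\log\frac{\tilde\sigma_2}{\sigma_0}+\frac{(y_{tk}-\rho_0 y_{t-1,k}-\beta_0 u_t)^2}{2\sigma_0^2}-\frac{(y_{tk}-\tilde\rho_2 y_{t-1,k}-\tilde\beta_2 z^*_t)^2}{2\tilde\sigma^2_2}\right],
\]
expanding the squares and passing to the limit term by term. The ingredients are the second-moment and lag-one moment limits of $\{y_{tk}\}$ under $\mathcal M_0$ already computed in Section~\ref{subsec:ar1_forward} (yielding the $\sigma_0^2/(1-\rho_0^2)$ and $\beta_0^2\sigma^2_{x+z}/(1-\rho_0^2)$ terms), the moment limits (\ref{eq:moments_z}) for $\{z^*_t\}$, the cross-moments $\tfrac1n\sum_t z_t z^*_t$, $\tfrac1n\sum_t y_{tk}z^*_t$, $\tfrac1n\sum_t y_{t-1,k}z^*_t$ evaluated via (A1)--(A2) and Kolmogorov's strong law for independent non-identical summands, and the first-order conditions for $h_2$ that pin down $(\tilde\rho_2,\tilde\beta_2,\tilde\sigma^2_2)$. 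Collecting the constants reproduces (\ref{eq:h2_star_ar1}). The companion statement on $\mathcal M_1$ versus $\mathcal M_2$ would then follow by the same route from Theorem~\ref{theorem:inverse_cons3_mis}.

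The step I expect to be the real obstacle is not any isolated computation but the bookkeeping in (ii): one must confirm that every Ces\`aro average arising in the expansion of $\tfrac1n\sum_t\log[\cdot]$ genuinely converges --- in particular the mixed sums coupling the lagged response $y_{t-1,k}$, the true covariate $u_t=x_t+z_t$, and the surrogate $z^*_t$ --- which amounts to an (A1)-type lemma for the starred quantities, and to verify that the resulting constants assemble into precisely (\ref{eq:h2_star_ar1}) rather than a superficially different equivalent form (the kind of subtlety flagged in Remarks~\ref{remark:inverse1} and \ref{remark:inverse2}). This existence-of-limit issue is exactly what Theorem~\ref{theorem:inverse_cons3} leaves as the proviso ``provided the limit exists''.
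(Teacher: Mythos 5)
Your proposal is correct and follows essentially the same route as the paper: the paper also obtains Theorem \ref{theorem:ar1_2} by transferring the Shalizi-condition verification from the forward AR(1) analysis of Section \ref{subsec:ar1_forward}, invoking the general second-setup result (Theorem \ref{theorem:inverse_cons3} together with Remark \ref{remark:inv_cv1} and the observation that $B^{(0)}_{tm}(\theta_0)\stackrel{a.s.}{\longrightarrow}\{u_t\}$), identifying the surrogate covariates $z^*_t$ via (\ref{eq:zt_star2}), and substituting the moment limits (\ref{eq:moments_z}) into the Gaussian log-density expansion to arrive at (\ref{eq:h2_star_ar1}). The bookkeeping concern you flag in part (ii) is precisely what the paper handles (tersely) under ``Direct calculations reveal that'' and the ``provided the limit exists'' proviso, so no substantive gap separates your argument from theirs.
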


\begin{theorem}[{\bf IPBF convergence for $\mathcal M_1$ versus $\mathcal M_2$}] 
	\label{theorem:ar1_3}
	Consider comparing models $\mathcal M_1$ (\ref{eq:ar1_x_inv}) against model $\mathcal M_2$ (\ref{eq:ar1_z_inv}).
	Assume the priors (\ref{eq:ar1_prior1}) and (\ref{eq:ar1_prior2}) on $\tilde x_t$ and $\tilde z_t$ 
	under the models $\mathcal M_1$ and $\mathcal M_2$, respectively. Then
	\begin{equation*}
		\underset{m\rightarrow\infty}{\lim}\underset{n\rightarrow\infty}{\lim}~\frac{1}{n}\log IPBF^{(n,m,k)}(\mathcal M_1,\mathcal M_2)
		\stackrel{a.s.}{=}-\left(h^*_1(\tilde\theta_1)-h^*_2(\tilde\theta_2)\right),
	\end{equation*}
	where $h^*_1$ and $h^*_2$ are given by (\ref{eq:h1_star_ar1}) and (\ref{eq:h2_star_ar1}). In the above, 
	$\tilde\theta_1$ and $\tilde\theta_2$ are the unique minimizers of $h_1$ of $h_2$ given by (\ref{eq:h1_ar1}) and (\ref{eq:h2_ar1}), respectively. 
	The result remains unchanged if $u_t$ is treated as known for cross-validation with respect to $\mathcal M_0$.
\end{theorem}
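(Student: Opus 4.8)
The plan is to mimic the proofs of Theorem~\ref{theorem:inverse_cons3_mis} and of Theorem~\ref{theorem:ar1_forward3}, reducing the comparison of $\mathcal M_1$ and $\mathcal M_2$ to two comparisons against the true model $\mathcal M_0$. From the definition (\ref{eq:pdf_inverse2}) one has, for every $n,m,k$,
\begin{equation*}
\frac{1}{n}\log IPBF^{(n,m,k)}(\mathcal M_1,\mathcal M_2)
=\frac{1}{n}\log IPBF^{(n,m,k)}(\mathcal M_1,\mathcal M_0)
-\frac{1}{n}\log IPBF^{(n,m,k)}(\mathcal M_2,\mathcal M_0),
\end{equation*}
since taking logarithms turns the ratio into a difference and the factor $\prod_{t=1}^n\pi(y_{tk}|\bY_{nm,-t},\bU_{n,-t})$ coming from $\mathcal M_0$ appears in numerator and denominator alike. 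Letting $n\rightarrow\infty$ and then $m\rightarrow\infty$ and invoking Theorem~\ref{theorem:ar1_1} for the first right-hand term and Theorem~\ref{theorem:ar1_2} for the second gives the asserted almost sure limit $-\left(h^*_1(\tilde\theta_1)-h^*_2(\tilde\theta_2)\right)$. Because the $\mathcal M_0$ contribution is identical in both terms and therefore cancels, the conclusion is unaffected by whether cross-validation under $\mathcal M_0$ uses the prior (\ref{eq:ar1_prior0}) on $\tilde u_t$ or treats $u_t$ as known; this is exactly the last sentence of the theorem.

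To make the citation of Theorems~\ref{theorem:ar1_1} and \ref{theorem:ar1_2} legitimate I would check their hypotheses for $\mathcal M_1$ and $\mathcal M_2$: conditions (S1)--(S7) of Shalizi hold just as verified in Section~\ref{subsec:ar1_forward}, because deleting the block $\by_i$ still retains $(n-1)m$ observations and the posterior $\pi(\cdot\,|\,\bY_{nm,-i},\bX_{n,-i},\mathcal M_j)$ is of exactly the same Gaussian--linear form as in the forward setup; the one-step densities $f(y_{tk}|\theta,\tilde x_t,\bY^{(t-1)}_k,\mathcal M_j)$ are Gaussian, hence bounded and continuous in $(\theta,\tilde x_t)$; and $\tilde\theta_1,\tilde\theta_2$ are one-to-one in the covariate --- equivalently $\tilde\beta_1\neq 0$ and $\tilde\beta_2\neq 0$, as can be read off the closed forms of the minimizers --- so that $B^{(1)}_{tm}(\tilde\theta_1)\stackrel{a.s.}{\longrightarrow}\{x^*_t\}$ and $B^{(2)}_{tm}(\tilde\theta_2)\stackrel{a.s.}{\longrightarrow}\{z^*_t\}$ as in (\ref{eq:xt_star})--(\ref{eq:zt_star}). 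If one prefers to avoid introducing $\mathcal M_0$ at all, the same conclusion follows directly in the spirit of Remark~\ref{remark:inverse2}: from $\pi(\cdot\,|\,\bY_{nm,-i},\bX_{n,-i},\mathcal M_j)\stackrel{w}{\longrightarrow}\delta_{(x^*_{ij},\tilde\theta_j)}$ (obtained exactly as in the proof of Theorem~\ref{theorem:inverse_cons3}) together with the Portmanteau theorem one gets $\pi(y_{ik}|\bY_{nm,-i},\bX_{n,-i},\mathcal M_j)\stackrel{a.s.}{\longrightarrow}f(y_{ik}|\tilde\theta_j,x^*_{ij},\bY^{(i-1)}_k,\mathcal M_j)$, and then one averages the logarithms over $i=1,\ldots,n$ and subtracts.

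The step I expect to be the genuine obstacle is the existence and explicit evaluation of the limiting rates
\begin{equation*}
h^*_j(\tilde\theta_j)=\underset{n\rightarrow\infty}{\lim}~\frac{1}{n}\sum_{t=1}^n
\log\left[\frac{f(y_{tk}|\theta_0,x_t,\bY^{(t-1)}_k,\mathcal M_0)}{f(y_{tk}|\tilde\theta_j,x^*_{tj},\bY^{(t-1)}_k,\mathcal M_j)}\right],\qquad j=1,2,
\end{equation*}
since $\mathcal M_1$ and $\mathcal M_2$ are autoregressive. Expanding the Gaussian log-densities produces empirical averages of $y^2_{t-1,k}$, of $(x^*_t)^2$ and $(z^*_t)^2$, and of the cross-products among $y_{t,k}$, $y_{t-1,k}$ and the effective covariates $x^*_t,z^*_t$ defined in (\ref{eq:xt_star2})--(\ref{eq:zt_star2}); the key point is that the AR(1) moment computations of Section~\ref{subsec:ar1_forward}, carried out with $x_t,z_t,\sigma^2_x,\sigma^2_z$ replaced by $x^*_t,z^*_t,\sigma^2_{x^*},\sigma^2_{z^*}$ as recorded in (\ref{eq:moments_x})--(\ref{eq:moments_z}), deliver precisely the closed forms (\ref{eq:h1_star_ar1}) and (\ref{eq:h2_star_ar1}), so the limits exist almost surely. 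Uniqueness of $\tilde\theta_1,\tilde\theta_2$ (hence well-definedness of $x^*_t,z^*_t$) comes from the convexity of $h_1,h_2$ already noted, while the Kolmogorov strong law for independent non-identically distributed summands, combined with the limit relations (A1) and the bound (A2), handles the convergence of each empirical average. Assembling these pieces yields the stated identity.
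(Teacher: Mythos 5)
Your proposal is correct and follows essentially the same route as the paper: the paper obtains Theorem \ref{theorem:ar1_3} by the telescoping decomposition through $\mathcal M_0$ (exactly as in the proof of Theorem \ref{theorem:inverse_cons3_mis}), applied to Theorems \ref{theorem:ar1_1} and \ref{theorem:ar1_2}, with the Shalizi conditions inherited from the forward AR(1) analysis of Section \ref{subsec:ar1_forward} and the limits $h^*_1,h^*_2$ evaluated via the substitutions $x_t,z_t,\sigma^2_x,\sigma^2_z\mapsto x^*_t,z^*_t,\sigma^2_{x^*},\sigma^2_{z^*}$ in (\ref{eq:moments_x})--(\ref{eq:moments_z}). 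Your observations about the cancellation of the $\mathcal M_0$ contribution (yielding the final sentence of the theorem) and the need for $\tilde\beta_1\neq 0$, $\tilde\beta_2\neq 0$ to secure the one-to-one hypothesis match the paper's implicit reasoning.
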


\subsection{Discussion of FPBF and IPBF convergence for nonparametric regression models}
\label{subsec:nonpara1}
\ctn{Chatterjee1} investigate posterior convergence for Gaussian and general stochastic process regression under suitable assumptions while
posterior convergence for binary and Poisson nonparametric regression based on Gaussian process modeling of the regression function are addressed in \ctn{Chatterjee2}. 
In all these nonparametric setups, the authors verified assumptions (S1)--(S7) of Shalizi. Here it is important to point out that Theorem \ref{theorem:shalizi_Rn} used to verify
assumption (S6) of Shalizi in our parametric setups, is not valid in infinite-dimensional nonparametric models since without further assumptions on model
sparsity, $\tilde\theta^*_n$ can not converge to $\tilde\theta$. That is, condition (ii) of Theorem \ref{theorem:shalizi_Rn} does not hold in general for nonparametric models. 
Moreover, enforcing sparsity conditions to general stochastic processes, such as Gaussian processes, need not be desirable. \ctn{Chatterjee1} and \ctn{Chatterjee2}
propose a general sufficient condition for verification of (S6) of Shalizi, which is appropriate for nonparametric models, and use that condition for their purposes.

The point of the above discussion is that assumptions (S1)--(S7) are already verified by \ctn{Chatterjee1} and \ctn{Chatterjee2} 
for nonparametric Bayesian regression models, and since boundedness and continuity of $f(y_i|\theta,\mathcal M)$ also hold for such models $\mathcal M$,
our asymptotic results on almost sure exponential convergence of FPBF and IPBF are directly applicable to such models. 
For IPBF convergence in nonparametric situations, the priors for $\tilde x_i$ proposed in Section \ref{subsubsec:illustrations_prior} for nonparametric cases (ii)--(iv)
are appropriate.

Note that parametric and nonparametric models
can also be compared asymptotically using our FPBF and IPBF theory.

\section{Simulation experiments}
\label{sec:simstudy}
So far we have investigated large sample properties of FPBF and IPBF. However, for all practical purposes it is important to provide insights into small sample
behaviours of such versions of pseudo-Bayes factor. In this section we undertake such small sample study with the help of simulation experiments.
Specifically, we set $n=m=10$ and generate data from relevant Poisson distribution with the log-linear link function  and consider modeling the data with
Poisson and geometric distributions with log, logit and probit links for linear models as well as nonparametric regression modeled by Gaussian process having linear mean function
and squared exponential covariance. We also consider variable selection in these setups with respect to two different covariates. We report both FPBF and IPBF results
for the experiments. Details follow.

\subsection{Poisson versus geometric linear and nonparametric regresison models when the true model is Poisson linear regression}
\label{subsec:poisson_vs_geometric}

\subsubsection{True distribution}
Let us first consider the case 
where the true data-generating distribution
is $y_{ij}\sim Poisson(\lambda(x_i))$, with $\lambda(x)=\exp(\alpha_0+\beta_0 x)$. We generate the data by simulating $\alpha_0\sim U(-1,1)$, $\beta_0\sim U(-1,1)$
and $x_i\sim U(-1,1)$; $i=1,\ldots,n$, and then finally simulating $y_{ij}\sim Poisson(\lambda(x_i))$; $j=1,\ldots,m$, $i=1,\ldots,n$.

To model the data generated from the true distribution, we consider both Poisson and geometric distributions and both linear and Gaussian process based nonparametric
regression for such models. Let us begin with the Poisson setup. 

\subsection{Competing forward and inverse Poisson regression models}
\label{subsec:poisson_models}

\subsubsection{Forward Poisson linear regression model}
\label{subsubsec:forward_poisson_linear}
In this setup we model the data as follows: $y_{ij}\sim Poisson(\lambda(x_i))$, with $\lambda(x)=\exp(\alpha+\beta x)$, and set the prior
$\pi\left(\alpha,\beta\right)=1$, for $-\infty<\alpha,\beta<\infty$. 
For the forward setup, this completes the model and prior specifications. Denoting this by model $\mathcal M$, we compute the forward cross-validation posterior
of the form 
\begin{equation}
	\pi(y_{i1}|\bY_{n,-i},\bX_n,\mathcal M)=\int_{\Theta} f(y_{i1}|\theta,x_i,\bY^{(i-1)}_1,\mathcal M)d\pi(\theta|\bY_{n,-i},\bX_{n,-i},\mathcal M),
\label{eq:post_forward_mult}
\end{equation}
by taking Monte Carlo averages of $f(y_{i1}|\theta,x_i,\bY^{(i-1)},\mathcal M)$ over realizations of $\theta$ from $\pi(\theta|\bY_{n,-i},\bX_{n,-i},\mathcal M)$. 
In our case this is the Monte Carlo average of 
the relevant Poisson probability of $y_{i1}$ given $x_i$ over realizations of $\theta=(\alpha,\beta)$. Samples of $\theta$ are obtained approximately 
from the posterior distribution
of $\pi(\theta|\bY_{nm,-i},\bX_{n,-i})$ by first generating realizations from the ``importance sampling density" 
$\pi(\theta|\bY_{nm},\bX_{n})$ using transformation based Markov chain Monte Carlo (TMCMC)
(\ctn{Dutta13}) and then re-using the realizations with importance weights to obtain the desired Monte Carlo averages. 
The rationale behind the choice of the full posterior $\pi(\theta|\bY_{nm},\bX_{n})$ associated with the full data set as the importance sampling density is that it is
not significantly different from the posterior $\pi(\theta|\bY_{nm,-i},\bX_{n,-i})$ associated with leaving out a single data point.
This choice is also quite popular in the literature; see, for example,
\ctn{Gelfand96}. In our examples, we generate $30,000$ TMCMC samples from $\pi(\theta|\bY_{nm},\bX_{n})$
of which we discard the first $10,000$ as burn-in, and re-sample $1000$ $\theta$-realizations without replacement from the remaining $20,000$ realizations. 
We re-use each re-sampled $\theta$-value $100$ times and compute the Monte Carlo average over such $1000\times 100=100,000$ realizations. 
The re-use of each re-sampled $\theta$-value corresponds to importance re-sampling MCMC (IRMCMC) of \ctn{Bhatta07}. Although IRMCMC is meant for cross-validation
in inverse problems, the idea carries over to forward problems as well. We finally compute $\frac{1}{n}\sum_{i=1}^n\log\pi(y_{i1}|\bY_{nm,-i},\bX_n,\mathcal M)$
for model $\mathcal M$.

\subsubsection{Inverse Poisson linear regression model}
\label{subsubsec:inverse_poisson_linear}

With the same Poisson linear regression model as in the forward case, we now put a prior on $\tilde x_i$ corresponding to $x_i$. 
In our case, it follows from Section \ref{subsubsec:illustrations_prior} that $\pi(\tilde x_i|\alpha,\beta)\equiv U(a,b)$, where 
\begin{equation}
a=\min\left\{\beta^{-1}\left(\log\left(\bar y_i-\frac{c_1s_i}{\sqrt{m}}\right)-\alpha\right),
\beta^{-1}\left(\log\left(\bar y_i+\frac{c_2s_i}{\sqrt{m}}\right)-\alpha\right)\right\}
	\label{eq:a1}
\end{equation}
and 
\begin{equation}
b=\max\left\{\beta^{-1}\left(\log\left(\bar y_i-\frac{c_1s_i}{\sqrt{m}}\right)-\alpha\right),
\beta^{-1}\left(\log\left(\bar y_i+\frac{c_2s_i}{\sqrt{m}}\right)-\alpha\right)\right\}.
	\label{eq:b1}
\end{equation}
We set $c_1=1$ and $c_2=100$, for ensuring positive value of $\bar y_i-\frac{c_1s_i}{\sqrt{m}}$ (so that logarithm of this quantity is well-defined) 
and a reasonably large support of the prior for $\tilde x_i$. We then compute 
$$
\pi(y_{i1}|\bY_{nm,-i},\bX_{n,-i},\mathcal M)=\int_{\mathcal X}\int_{\Theta} f(y_{i1}|\theta,\tilde x_i,\bY^{(i-1)}_1,\mathcal M)
	d\pi(\tilde x_i,\theta|\bY_{nm,-i},\bX_{n,-i},\mathcal M)
$$
by Monte Carlo averaging of the relevant Poisson probability of $y_{i1}$ over realizations of $(\tilde x_i,\theta)=(\tilde x_i,\alpha,\beta)$
generated from $\pi(\tilde x_i,\theta|\bY_{nm,-i},\bX_{n,-i},\mathcal M)$. Since it follows from (\ref{eq:inv1}) that
$\pi(\tilde x_i,\theta|\bY_{nm,-i},\bX_{n,-i},\mathcal M)$ $=$ $\pi(\tilde x_i|\theta,\mathcal M)\pi(\theta|\bY_{nm,-i},\bX_{n,-i},\mathcal M)$, and 
since realizations of $\theta$ from $\pi(\theta|\bY_{nm,-i},\bX_{n,-i},\mathcal M)$ are already available in the forward context, we simply generate
$\tilde x_i$ given $\theta$ from the prior for $\tilde x_i$ to obtain realizations from $\pi(\tilde x_i,\theta|\bY_{nm,-i},\bX_{n,-i},\mathcal M)$.
Note that for different $i$, only sub-samples of $\theta$ of size $1000$ from the original sample of size $20,000$ from the full posterior of $\theta$ are available,
and each $\theta$ is repeated $100$ times. However, realizations of $\tilde x_i$ are all distinct in spite of repetitions of $\theta$-values.

Once for each $i=1,\ldots,n$, the Monte Carlo estimates of $\pi(y_{i1}|\bY_{nm,-i},\bX_{n,-i},\mathcal M)$ are available, we finally obtain the estimate of
$\frac{1}{n}\sum_{i=1}^n\log\pi(y_{i1}|\bY_{nm,-i},\bX_{n,-i},\mathcal M)$ using the individual Monte Carlo estimates.

\subsubsection{Forward Poisson nonparametric regression model}
\label{subsubsec:forward_poisson_nonpara}

We now consider the case where $y_{ij}\sim Poisson(\lambda(x_i))$, where $\lambda(x)=\exp(\eta(x))$, where
$\eta(\cdot)$ is a Gaussian process with mean function $\mu(x)=\alpha+\beta x$ and covariance 
$Cov\left(\eta(x_1),\eta(x_2)\right)=\sigma^2\exp\left\{-(x_1-x_2)^2\right\}$, where $\sigma$ is unknown. 
%
For our convenience, we reparameterize $\sigma^2$ as $\exp(\omega)$, where $-\infty<\omega<\infty$. 
For the prior on the parameters, we set $\pi\left(\alpha,\beta,\omega\right)=1$, for $-\infty<\alpha,\beta,\omega<\infty$. 

In the inverse case, for the reason of prior specification, we linearize $\eta(\tilde x_i)$ as $\alpha+\beta\tilde x_i$; see Section \ref{subsubsec:inverse_poisson_nonpara}.
Hence, for comparability with the inverse counterpart, we set $\eta(x_i)=\alpha+\beta x_i$. Thus, in the forward case, 
$\theta=(\alpha,\beta,\eta(x_1),\ldots,\eta(x_{i-1}),\eta(x_{i+1}),\ldots,\eta(x_n),\omega)$. 
We obtain $\frac{1}{n}\sum_{i=1}^n\log\pi(y_{i1}|\bY_{nm,-i},\bX_n,\mathcal M)$ 
using the same method of Monte Carlo averaging described in Section \ref{subsubsec:forward_poisson_linear}, where $\theta$ is again first generated using TMCMC
from the full posterior of $\theta$ by discarding the first $10,000$ iterations and retaining the next $20,000$ for inference, which are re-used to approximate
the desired posteriors $\pi(\theta|\bY_{nm,-i},\bX_{n,-i},\mathcal M)$. As before, we obtain Monte Carlo averages over $100,000$ realizations of $\theta$.

\subsubsection{Inverse Poisson nonparametric regression model}
\label{subsubsec:inverse_poisson_nonpara}
The model in this case remains the same as that in Section \ref{subsubsec:forward_poisson_nonpara}, but now a prior on $\tilde x_i$ is needed.
However, note that the prior for $\tilde x_i$, which is uniform on 
$B_{im}(\eta)=\left\{x:\eta(x)\in \log\left\{\left[\bar y_i-\frac{c_1s_i}{\sqrt{m}},\bar y_i+\frac{c_2s_i}{\sqrt{m}}\right]\right\}\right\}$, does not have a closed form,
since the form of $\eta(x)$ is unknown. However, if $m$ is large, the interval 
$\log\left\{\left[\bar y_i-\frac{c_1s_i}{\sqrt{m}},\bar y_i+\frac{c_2s_i}{\sqrt{m}}\right]\right\}$ is small, and $\eta(x)$ falling in this small interval
can be reasonably well-approximated by a straight line. Hence, we set $\eta(x)=\mu(x)=\alpha+\beta x$, for $\eta(x)$ falling in this interval.
Thus it follows that $\pi(\tilde x_i|\eta)\equiv U(a,b)$, where $a$ and $b$ are given by (\ref{eq:a1}) and (\ref{eq:b1}), respectively.
Hence, we obtain the same prior for $\tilde x_i$ as in the case of linear Poisson regression described in Section \ref{subsubsec:inverse_poisson_linear}.
As before we set $c_1=1$ and $c_2=100$.

The method for obtaining $\frac{1}{n}\sum_{i=1}^n\log\pi(y_{i1}|\bY_{nm,-i},\bX_{n,-i},\mathcal M)$ remains the same as discussed in Section 
\ref{subsubsec:inverse_poisson_linear}.

\subsection{Competing forward and inverse geometric regression models}
\label{subsec:geometric_models}
We also report results of our simulation experiments where data generated from Poisson linear regression is modeled by geometric regression models of the form
\begin{equation}
	f(y_{ij}|\theta,x_i)=(1-p(x_i))^{y_{ij}}p(x_i),
	\label{eq:geo1}
\end{equation}
where $p(x_i)$ is modeled as logit or probit linear or nonparametric regression. In other words, we consider the following possibilities of modeling $p(x)$:
\begin{align}
	&\log\left(\frac{p(x)}{1-p(x)}\right)=\alpha+\beta x;~\log\left(\frac{p(x)}{1-p(x)}\right)=\eta(x);\notag\\
	&p(x)=\Phi\left(\alpha+\beta x\right);~p(x)=\Phi\left(\eta(x)\right),\notag
\end{align}
where $\Phi$ is the cumulative distribution function of the standard normal distribution. In the above, $\eta$ is again modeled by a Gaussian process with
mean function $\mu(x)=\alpha+\beta x$ and covariance function given by $Cov(\eta(x_1),\eta(x_2))=\sigma^2\exp\left\{-(x_1-x_2)^2\right\}$.
We again set $\sigma^2=\exp(\omega)$, where $-\infty<\omega<\infty$, and consider the prior $\pi(\alpha,\beta,\omega)=1$ for $-\infty<\alpha,\beta,\omega<\infty$.

In the inverse setup we assign prior on $\tilde x_i$ such that the mean of the geometric distribution, namely, $\frac{1-p(x)}{p(x)}$, lies in 
$\left[\bar y_i-\frac{c_1s_i}{\sqrt{m}},\bar y_i+\frac{c_2s_i}{\sqrt{m}}\right]$. Using the same principles as before it follows that for the logit link, either
for linear or Gaussian process regression, the prior for $\tilde x_i$ is $U(a_1,b_1)$, where  
\begin{equation}
a_1=\min\left\{-\beta^{-1}\left(\log\left(\bar y_i-\frac{c_1s_i}{\sqrt{m}}\right)+\alpha\right),
-\beta^{-1}\left(\log\left(\bar y_i+\frac{c_2s_i}{\sqrt{m}}\right)+\alpha\right)\right\}
	\label{eq:a2}
\end{equation}
and 
\begin{equation}
b_1=\max\left\{-\beta^{-1}\left(\log\left(\bar y_i-\frac{c_1s_i}{\sqrt{m}}\right)+\alpha\right),
-\beta^{-1}\left(\log\left(\bar y_i+\frac{c_2s_i}{\sqrt{m}}\right)+\alpha\right)\right\}.
	\label{eq:b2}
\end{equation}
We set $c_1=1$ and $c_2=100$, as before.

For geometric probit regression, first let $\ell_{im}=\bar y_i-\frac{c_1s_i}{\sqrt{m}}$ and $u_{im}=\bar y_i+\frac{c_2s_i}{\sqrt{m}}$. Let
\begin{align}
	a_2&=\min\left\{\frac{\Phi^{-1}\left(\frac{1}{u_{im}+1}\right)-\alpha}{\beta},\frac{\Phi^{-1}\left(\frac{1}{\ell_{im}+1}\right)-\alpha}{\beta}\right\};\label{eq:a3}\\
	b_2&=\max\left\{\frac{\Phi^{-1}\left(\frac{1}{u_{im}+1}\right)-\alpha}{\beta},\frac{\Phi^{-1}\left(\frac{1}{\ell_{im}+1}\right)-\alpha}{\beta}\right\};\label{eq:b3}
\end{align}
Then the prior for $\tilde x_i$ is $U(a_2,b_2)$, for both linear and Gaussian process based geometric probit regression.

The rest of the methodology for computing FPBF and IPBF for geometric regression remains the same as for Poisson regression described in Section \ref{subsec:poisson_models}.

\subsubsection{Results of the simulation experiment for model selection}
\label{subsubsec:results_model}

For $n=m=10$, when the true model is Poisson with log-linear regression, the last two columns of
Table \ref{table:bayesian} provide the forward and inverse estimates of $\frac{1}{n}\sum_{i=1}^n\log\pi(y_{i1}|\bY_{nm,-i},\bX_n,\mathcal M)$ and 
$\frac{1}{n}\sum_{i=1}^n\log\pi(y_{i1}|\bY_{nm,-i},\bX_{n,-i},\mathcal M)$, respectively, 
for Poisson and geometric linear and Gaussian process regression with different link functions, 
using which the models can be easily compared with respect to both forward and inverse
perspectives using FPBF and IPBF. 
Note that forward and inverse perspectives can also be compared.

Observe that the forward Poisson log-linear regression turns out to be the best model as expected, since this corresponds to the true, data-generating distribution.
The Gaussian process based Poisson inverse regression model is the next best, followed closely by the Poisson log-linear inverse regression model, and then comes
the Gaussian process based Poisson forward regression model. This order of model selection can be explained as follows. First, the inverse cases involve more
uncertainties than the corresponding forward models, since these cases treat $x_i$ as unknown. Hence, expectedly the Poisson log-linear forward regression model 
outperforms the inverse counterpart. But the inverse Gaussian process regression performs marginally better than the inverse linear model and more
significantly better than the forward Gaussian process
model. This merits an interesting explanation. Recall that in the inverse Gaussian process model $\eta(\tilde x_i)$ has been linearized for constructing the prior
for $\tilde x_i$, so that this part is equivalent to the linear model, which explains why the difference between the inverse linear and Gaussian process models
is not significant. However, 
the linear part of the Gaussian process model is of course influenced by the additional Gaussian process part associated with the other data points,
unlike the linear regression models. The posterior dependence structure, in conjunction with the posterior distribution of $\tilde x_i$, 
can yield better regression estimates $\eta(\tilde x_i)$ for the $i$-th data point in a substantial number of Monte Carlo iterations.  
Since the Gaussian process model includes the linear model as a special case (that is, it is not a case of misspecification), 
this explains why the inverse Gaussian process regression performs marginally better than the inverse linear model. In the forward Gaussian process regression,
even though we have linearized $\eta(x_i)$ for comparability with the inverse model, $x_i$ is fixed. Thus, when the $i$-th regression part is not well-estimated
in the Monte Carlo simulations, there is no further scope for improvement in this part. However, in the inverse Gaussian process regression, $x_i$ is replaced with
the random $\tilde x_i$, which, though its posterior simulations, can improve upon the $i$-th regression part with positive probability, even if the regression  
coefficients are not well-estimated. Thus, the inverse Gaussian process regression model can significantly outperform
the forward counterpart, as we observe here.

The geometric logit and probit linear and Gaussian process regressions are examples of model misspecifications since the true, data-generating model is the Poisson
log-linear regression model. Accordingly, both the forward and inverse setups perform worse than the Poisson regression setups. 
Among the forward and inverse cases for geometric regression, the probit linear model performs the best, followed closely by the logit linear model, then
by the forward logit Gaussian process and then by the forward probit Gaussian process -- all the inverse regression models perform worse than the worse
of the forward regression models. This is not surprising since all these models are cases of misspecifications and given the data generated from the true model, 
the inverse models here only increase
the uncertainty regarding $x_i$ compared to the forward models without any positive effect. However, note that the inverse logit Gaussian process model significantly
outperforms the inverse logit linear model thanks to its better flexibility and similar prior structure for $\tilde x_i$ as in the case of the true log-linear Poisson
regression whose positive effects carry over to this case from the first two rows of the last column of Table \ref{table:bayesian}. But the same phenomenon of superiority
of the inverse probit Gaussian process over inverse probit linear model is not at all visible since the prior structure of $\tilde x_i$ in this misspecified case is completely
different from that of the true Poisson log-linear model, and indeed, inconsistent.

\begin{table}[h]
\centering
	\caption{Results of our simulation study for model selection using FPBF and IPBF. The last two columns show the estimates of 
	$\frac{1}{n}\sum_{i=1}^n\log\pi(y_{i1}|\bY_{nm,-i},\bX_n,\mathcal M)$ and $\frac{1}{n}\sum_{i=1}^n\log\pi(y_{i1}|\bY_{nm,-i},\bX_{n,-i},\mathcal M)$,
	respectively, for forward and inverse setups.}
\label{table:bayesian}
	\begin{tabular}{|c|c|c|c|c|}
\hline
		Model & Link function & Regression form & Forward & Inverse\\ 
\hline
		$Poisson(\lambda(x_i))$  &  log  & linear &  $-7.913$  &  $-8.440$\\
		$Poisson(\lambda(x_i))$  &  log  & Gaussian process &  $-8.503$  &  $-8.409$\\
		$Geometric(p(x_i))$  &  logit  & linear & $-9.176$  &  $-18.247$\\
		$Geometric(p(x_i))$  &  logit  & Gaussian process & $-9.529$  &  $-14.766$\\
		$Geometric(p(x_i))$  &  probit  & linear &  $-9.348$  &  $-14.434$\\
		$Geometric(p(x_i))$  &  probit  & Gaussian process &  $-10.915$  &  $-23.733$\\
\hline
\end{tabular}
\end{table}

\subsection{Variable selection in Poisson and  geometric linear and nonparametric regression models when true model is Poisson linear regression}
\label{subsec:poisson_vs_geometric_vs}
Rather than a single covariate $x$ in the previous examples, let us now consider covariates $x$ and $z$, where the
true data-generating distribution
is $y_{ij}\sim Poisson(\lambda(x_i,z_i))$, with $\lambda(x,z)=\exp(\alpha_0+\beta_0 x+\gamma_0 z)$. 
We generate the data by simulating $\alpha_0,\beta_0,\gamma_0\sim U(-1,1)$, independently;
and $x_i\sim U(-1,1)$, $z_i\sim U(0,2)$; $i=1,\ldots,n$, and then finally simulating $y_{ij}\sim Poisson(\lambda(x_i,z_i))$; $j=1,\ldots,m$, $i=1,\ldots,n$.

We model the data $y_{ij}$; $i=1,\ldots,n$; $j=1,\ldots,m$ with both Poisson and geometric models as before with the regression part consisting of
either $x$ or $z$, or both. We denote the linear regression coefficients of the intercept, $x$ and $z$ as $\alpha$, $\beta$ and $\gamma$, respectively,
and give the improper prior density $1$ to $(\alpha,\beta)$, $(\alpha,\gamma)$ and $(\alpha,\beta,\gamma)$ when the models consist of these combinations of parameters. 
For Gaussian process regression with both $x$ and $z$, we let $\eta(x,z)$ be the regression function modeled by a Gaussian process with mean 
$\mu(x,z)=\alpha+\beta x+\gamma z$ and covariance function $Cov\left(\eta(x_1,z_1),\eta(x_2,z_2)\right)=\exp\left(\omega\right)
\exp\left[-\left\{(x_1-x_2)^2+(z_1-z_2)^2\right\}\right]$, and we assign prior mass $1$ to 
$(\alpha,\beta,\omega)$, $(\alpha,\gamma,\omega)$ and $(\alpha,\beta,\gamma,\omega)$ when the models consist of the covariates $x$, $z$ or both.
Using FPBF and IPBF we then compare the different models, along with the covariates associated with them. 
In the inverse cases, where the model consists of the single covariate $x$ or $z$, then the priors for $\tilde x_i$ and $\tilde z_i$ remain the same as
in the previous cases. 

But wherever the models consist of both the covariates $x$ and $z$, we need to assign priors for both $\tilde x_i$ and $\tilde z_i$,
in addition to requiring that $E(y_{ij}|\theta,x_i,z_i)$ under the postulated model fall in $\left[\bar y_i-\frac{c_1s_i}{\sqrt{m}},\bar y_i+\frac{c_2s_i}{\sqrt{m}}\right]$.
The same priors for $\tilde x_i$ and $\tilde z_i$ as the previous situations where the models consisted of single covariates, will not be consistent in these situations.
For consistent priors we adopt the following strategy.
Letting $\alpha$ be the intercept, $\beta$ and $\gamma$ the coefficients of $x_i$ and $z_i$ respectively in the regression forms, we envisage the following 
priors for $\tilde x_i$ and $\tilde z_i$.

\subsubsection{Prior for $\tilde x_i$ and $\tilde z_i$ for Poisson regression}
\label{subsubsec:x_z_prior_poisson}
For the Poisson linear or Gaussian process regression model with log link consisting of both the covariates $x$ and $z$, we set 
$\tilde x_i\sim U\left(a^{(1)}_x,b^{(1)}_x\right)$ and  $\tilde z_i\sim U\left(a^{(1)}_z,b^{(1)}_z\right)$, where
\begin{equation*}
	a^{(1)}_x=\min\left\{\beta^{-1}\left(\log\left(\bar y_i-\frac{c_1s_i}{\sqrt{m}}\right)-\alpha-\gamma z_i\right),
\beta^{-1}\left(\log\left(\bar y_i+\frac{c_2s_i}{\sqrt{m}}\right)-\alpha-\gamma z_i\right)\right\},
\end{equation*}
 
\begin{equation*}
	b^{(1)}_x=\max\left\{\beta^{-1}\left(\log\left(\bar y_i-\frac{c_1s_i}{\sqrt{m}}\right)-\alpha-\gamma z_i\right),
\beta^{-1}\left(\log\left(\bar y_i+\frac{c_2s_i}{\sqrt{m}}\right)-\alpha-\gamma z_i\right)\right\},
\end{equation*}

\begin{equation*}
	a^{(1)}_z=\min\left\{\gamma^{-1}\left(\log\left(\bar y_i-\frac{c_1s_i}{\sqrt{m}}\right)-\alpha-\beta x_i\right),
\gamma^{-1}\left(\log\left(\bar y_i+\frac{c_2s_i}{\sqrt{m}}\right)-\alpha-\beta x_i\right)\right\}
\end{equation*}
and 
\begin{equation*}
	b^{(1)}_z=\max\left\{\gamma^{-1}\left(\log\left(\bar y_i-\frac{c_1s_i}{\sqrt{m}}\right)-\alpha-\beta x_i\right),
\gamma^{-1}\left(\log\left(\bar y_i+\frac{c_2s_i}{\sqrt{m}}\right)-\alpha-\beta x_i\right)\right\}.
\end{equation*}
Note that the priors for $\tilde x_i$ and $\tilde z_i$ depend upon $z_i$ and $x_i$ respectively.
This is somewhat in keeping with (\ref{eq:B2}) where the prior for $\tilde x_i$ depends upon $x_i$ itself. 
The discussion following (\ref{eq:B2}) is enough to justify that the priors for $\tilde x_i$ and $\tilde z_i$ in the current situation do make sense, apart from
ensuring consistency. 

\subsubsection{Prior for $\tilde x_i$ and $\tilde z_i$ for geometric regression with logit link}
\label{subsubsec:x_z_prior_geo_logit}
For the geometric linear or Gaussian process regression model with logit link consisting of both the covariates $x$ and $z$, we set 
$\tilde x_i\sim U\left(a^{(2)}_x,b^{(2}_x\right)$ and  $\tilde z_i\sim U\left(a^{(2)}_z,b^{(2)}_z\right)$, where
\begin{equation*}
	a^{(2)}_x=\min\left\{-\beta^{-1}\left(\log\left(\bar y_i-\frac{c_1s_i}{\sqrt{m}}\right)+\alpha+\gamma z_i\right),
-\beta^{-1}\left(\log\left(\bar y_i+\frac{c_2s_i}{\sqrt{m}}\right)+\alpha+\gamma z_i\right)\right\},
\end{equation*}

\begin{equation*}
	b^{(2)}_x=\max\left\{-\beta^{-1}\left(\log\left(\bar y_i-\frac{c_1s_i}{\sqrt{m}}\right)+\alpha+\gamma z_i\right),
-\beta^{-1}\left(\log\left(\bar y_i+\frac{c_2s_i}{\sqrt{m}}\right)+\alpha+\gamma z_i\right)\right\},
\end{equation*}

\begin{equation*}
	a^{(2)}_z=\min\left\{-\gamma^{-1}\left(\log\left(\bar y_i-\frac{c_1s_i}{\sqrt{m}}\right)+\alpha+\beta x_i\right),
-\gamma^{-1}\left(\log\left(\bar y_i+\frac{c_2s_i}{\sqrt{m}}\right)+\alpha+\beta x_i\right)\right\}
\end{equation*}
and 
\begin{equation*}
	b^{(2)}_z=\max\left\{-\gamma^{-1}\left(\log\left(\bar y_i-\frac{c_1s_i}{\sqrt{m}}\right)+\alpha+\beta x_i\right),
-\gamma^{-1}\left(\log\left(\bar y_i+\frac{c_2s_i}{\sqrt{m}}\right)+\alpha+\beta x_i\right)\right\}.
\end{equation*}

\subsubsection{Prior for $\tilde x_i$ and $\tilde z_i$ for geometric regression with probit link}
\label{subsubsec:x_z_prior_geo_probit}
For the geometric linear or Gaussian process regression model with probit link consisting of both the covariates $x$ and $z$, we set 
$\tilde x_i\sim U\left(a^{(3)}_x,b^{(3}_x\right)$ and  $\tilde z_i\sim U\left(a^{(3)}_z,b^{(3)}_z\right)$, where
\begin{equation*}
a^{(3)}_x=\min\left\{\frac{\Phi^{-1}\left(\frac{1}{u_{im}+1}\right)-\alpha-\gamma z_i}{\beta},
	\frac{\Phi^{-1}\left(\frac{1}{\ell_{im}+1}\right)-\alpha-\gamma z_i}{\beta}\right\},
\end{equation*}

\begin{equation*}
b^{(3)}_x=\max\left\{\frac{\Phi^{-1}\left(\frac{1}{u_{im}+1}\right)-\alpha-\gamma z_i}{\beta},
	\frac{\Phi^{-1}\left(\frac{1}{\ell_{im}+1}\right)-\alpha-\gamma z_i}{\beta}\right\},
\end{equation*}

\begin{equation*}
a^{(3)}_z=\min\left\{\frac{\Phi^{-1}\left(\frac{1}{u_{im}+1}\right)-\alpha-\beta x_i}{\gamma},
	\frac{\Phi^{-1}\left(\frac{1}{\ell_{im}+1}\right)-\alpha-\beta x_i}{\gamma}\right\}
\end{equation*}
and
\begin{equation*}
b^{(3)}_z=\max\left\{\frac{\Phi^{-1}\left(\frac{1}{u_{im}+1}\right)-\alpha-\beta x_i}{\gamma},
	\frac{\Phi^{-1}\left(\frac{1}{\ell_{im}+1}\right)-\alpha-\beta x_i}{\gamma}\right\}.
\end{equation*}

\subsubsection{Results of the simulation experiment for model and variable selection}
\label{subsubsec:results_model_var}

For $n=m=10$, when the true model is Poisson with log-linear regression on both the covariates $x$ and $z$, the last two columns of
Table \ref{table:bayesian2} provide the estimates of $\frac{1}{n}\sum_{i=1}^n\log\pi(y_{i1}|\bY_{nm,-i},\bX_n,\mathcal M)$ and 
$\frac{1}{n}\sum_{i=1}^n\log\pi(y_{i1}|\bY_{nm,-i},\bX_{n,-i},\mathcal M)$ for Poisson and geometric linear and Gaussian process regression on either $x_i$ or $z_i$ or both, 
with different link functions. Thus, the models, along with the associated covariates can be compared with respect to both forward and inverse
perspectives. 

Table \ref{table:bayesian2} shows that the correct Poisson log-linear model with both the covariates $x$ and $z$ has turned out to be the third best, after
the inverse Poisson log-linear model with covariate $x$ and the forward Poisson log-linear model with covariate $z$. However, the difference between
the latter and the correct model is not substantial and may perhaps be attributed to Monte Carlo sampling fluctuations. So, considering only the forward setup,
it is difficult to rule out the possibility of the correct Poisson log-linear model with both the covariates $x$ and $z$ from being the best.

That the inverse Poisson log-linear model with covariate $x$ seems to perform so well can be attributed to significant variability of the prior for $\tilde x_i$
which goes on to account for the missing $z_i$ as well in the additive model. Since the additive model is not identifiable when both $x_i$ and $z_i$ are unknown,
the significant prior variability of $\tilde x_i$ compensates for non-inclusion of $z_i$ in the model, given the data that has arisen from the true model consisting
of both $x$ and $z$. The same argument is valid for good performance of the inverse Poisson log-linear model with covariate $z$, where the prior variance for
$\tilde z_i$ compensates for non-inclusion of $x_i$. However, note that the performance of the inverse Poisson log-linear model deteriorates
significantly when the regression consists of both $x$ and $z$. This is of course the consequence of the priors for both $\tilde x_i$ and $\tilde z_i$, whose
variances get added up in the linear model. For small $n$ and $m$ as in our examples, the true values $x_i$ and $z_i$ fail to get enough posterior weight, an issue  that
gets reflected in the Monte Carlo simulations where the true regression is not represented in sufficiently large proportion.

For Poisson Gaussian process regression, the inverse models outperform their forward counterparts by large margins. 
This admits similar explanation provided in Section \ref{subsubsec:results_model} for the superiority of the inverse Poisson Gaussian process model 
compared to its forward counterpart as visible in Table \ref{table:bayesian}.  

For geometric linear regression, the forward models emerge the winners in all the cases, as opposed to the inverse counterparts and also outperform the Gaussian
geometric process regression models. Among the geometric models, the probit linear model with both the covariates $x$ and $z$, turns out to be the best.
That the corresponding inverse counterparts perform worse can be explained as in Section \ref{subsubsec:results_model} that these are instances of model
misspecification, and here the inverse models only increase uncertainty by treating $x_i$ and $z_i$ as unknown, without any beneficial effect. 

In geometric Gaussian process regression, the inverse models perform better than the corresponding forward ones in most cases. In these cases, given the
data generated from the true model, the Gaussian process
dependence combined with the prior variability render the inverse models somewhat less misspecified than the forward models with no prior associated with the covariates.

Also observe that given either forward or inverse setups, the linear models perform better than the corresponding Gaussian process models, for both Poisson and 
geometric cases. Since the true regression is linear, this seems to provide an internal consistency.
However, this phenomenon is somewhat different from that observed in Table \ref{table:bayesian} where the Gaussian process model performed better than the linear regression model
for Poisson and geometric logit models. The reason for this is inconsistency of the prior for $\tilde x_i$ when covariate $z$ is ignored and that of the 
prior for $\tilde z_i$ when covariate $x$ is ignored in the postulated model. Indeed, Table \ref{table:bayesian2} shows that in these cases, the inverse linear models
outperform the Gaussian process models by considerably large margins. In these cases the Gaussian process priors only increase uncertainties without adding any value,
since the priors for $\tilde x_i$ and $\tilde z_i$ are inconsistent. On the other hand, note that when both $x$ and $z$ are incorporated in the inverse models,
the linear models perform only marginally better than the Gaussian process models in the cases of inverse Poisson and inverse geometric logit models. This is 
because the priors of $\tilde x_i$ and $\tilde z_i$ are consistent in such cases, and moreover, the prior structures of $\tilde x_i$ and $\tilde z_i$ are similar
for Poisson and geometric logit regressions. For geometric probit regression, the prior structures are entirely different from those of the correct
Poisson model and in fact inconsistent, and as in Table \ref{table:bayesian}, here also inverse geometric probit Gaussian process regression performs much worse than 
inverse geometric probit linear regression. 

\begin{table}[h]
\centering
	\caption{Results of our simulation study for model and variable selection using FPBF and IPBF. The last two columns show the estimates of 
	$\frac{1}{n}\sum_{i=1}^n\log\pi(y_{i1}|\bY_{nm,-i},\bX_n,\mathcal M)$ and $\frac{1}{n}\sum_{i=1}^n\log\pi(y_{i1}|\bY_{nm,-i},\bX_{n,-i},\mathcal M)$,
	respectively, for forward and inverse setups.}
\label{table:bayesian2}
	\begin{tabular}{|c|c|c|c|c|c|}
\hline
		Covariates & Model & Link function & Regression form & Forward & Inverse\\ 
\hline
		$x_i$	&  $Poisson(\lambda(x_i))$  &  log  & linear &  $-8.618$  &  $-8.388$\\
		$z_i$	&  $Poisson(\lambda(z_i))$  &  log  & linear &  $-8.834$  &  $-8.739$\\
		$(x_i,z_i)$	&  $Poisson(\lambda(x_i,z_i))$  &  log  & linear &  $-8.686$  &  $-13.257$\\
		$x_i$ & $Poisson(\lambda(x_i))$  &  log  & Gaussian process &  $-31.831$  &  $-9.136$\\
		$z_i$ & $Poisson(\lambda(z_i))$  &  log  & Gaussian process &  $-31.213$  &  $-10.052$\\
		$(x_i,z_i)$ & $Poisson(\lambda((x_i,z_i))$  &  log  & Gaussian process &  $-17.712$  &  $-13.363$\\
		$x_i$ & $Geometric(p(x_i))$  &  logit  & linear & $-9.810$  &  $-10.526$\\
		$z_i$ & $Geometric(p(z_i))$  &  logit  & linear & $-9.673$  &  $-12.629$\\
		$(x_i,z_i)$ & $Geometric(p(x_i,z_i))$  &  logit  & linear & $-11.806$  &  $-15.478$\\
		$x_i$ & $Geometric(p(x_i))$  &  logit  & Gaussian process & $-26.232$  &  $-21.161$\\
		$z_i$ & $Geometric(p(z_i))$  &  logit  & Gaussian process & $-19.391$  &  $-29.388$\\
		$(x_i,z_i)$ & $Geometric(p(x_i,z_i))$  &  logit  & Gaussian process & $-17.128$  &  $-15.686$\\
		$x_i$ & $Geometric(p(x_i))$  &  probit  & linear &  $-9.543$  &  $-11.671$\\
		$z_i$ & $Geometric(p(z_i))$  &  probit  & linear &  $-9.401$  &  $-16.183$\\
		$(x_i,z_i)$ & $Geometric(p(x_i,z_i))$  &  probit  & linear &  $-9.060$  &  $-13.839$\\
		$x_i$ & $Geometric(p(x_i))$  &  probit  & Gaussian process &  $-23.538$  &  $-16.460$\\
		$z_i$ & $Geometric(p(z_i))$  &  probit  & Gaussian process &  $-20.522$  &  $-17.099$\\
		$(x_i,z_i)$ & $Geometric(p(x_i,z_i))$  &  probit  & Gaussian process &  $-20.102$  &  $-20.501$\\
\hline
\end{tabular}
\end{table}

\section{Summary and future direction}
\label{sec:conclusion}

The importance of PBF in Bayesian model and variable selection seems to have been overlooked in the statistical literature. In this article we have pointed out the 
theoretical and computational advantages of PBF over BF, and investigated the asymptotic convergence properties of PBF in general forward and inverse regression setups. 
Since the inverse regression problem requires a prior on the covariate value to be predicted, this makes the treatise of PBF distinct from the forward regression problems.
Specifically, we considered two setups for inverse regression. One setup is the same as that of forward regression except a prior for the relevant covariate value $\tilde x_i$.
Although the priors in this case can not guarantee consistency of the posterior for $\tilde x_i$, we show that the corresponding PBF still converges exponentially
and almost surely in favour of the better model, in the same way as for forward regression. However, for the inverse case, the convergence depends upon an integrated version
of the KL-divergence, rather than KL-divergence as in the forward case. In another inverse regression setup, we consider $m$ responses corresponding to each covariate value,
and assign the general prior for $\tilde x_i$ constructed by \ctn{Chat20}. This prior guarantees consistency for the posterior of $\tilde x_i$ when $m$ tends to
infinity, along with the sample size. For this inverse setup, PBF has convergence results similar to that of forward regression which is also
applicable to this setup, except that no prior is associated with the covariates. 

Our results on PBF for forward regression are in agreement with the general BF convergence theory established in \ctn{Chatterjee18}, as both are the same almost sure
exponential convergence depending upon the KL-divergence from the true model. Now there might arise the question if PBF and BF convergence agree even for inverse
regression setups. To clarify, first recall that BF is the ratio of the marginal densities of the data. Now for forward regression, the marginal density of the data $\bY_n$	
depends upon the observed covariates $\bX_n$. For model $\mathcal M_j$; $j=1,2$, let us denote this marginal by $m(\bY_n|\bX_n,\mathcal M_j)$. In the inverse setup,
we need to treat $\bX_n$ as unknown, and replace this with $\tilde\bX_n=(\tilde x_1,\tilde x_2,\ldots,\tilde x_n)$ having some relevant prior, which may even follow from
some stochastic process specification for $\tilde\bX_{\infty}=(\tilde x_1,\tilde x_2,\ldots)$. If $L(\theta_j|\bY_n,\bX_n,\mathcal M_j)$ denotes the 
likelihood of $\theta_j$ for fully observed data, then the marginal density of $\bY_n$ in the inverse situation is given by
\begin{align*}
	\tilde m(\bY_n|\mathcal M_j)&=\int_{\Theta_j}\int_{\mathcal X^n}L(\theta_j|\bY_n,\tilde\bX_n,\mathcal M_j)
	d\pi(\tilde\bX_n|\theta_j,\mathcal M_j)d\pi(\theta_j|\mathcal M_j)\notag\\
	&=\int_{\Theta_j}\tilde L(\theta_j|\bY_n,\mathcal M_j)d\pi(\theta_j|\mathcal M_j),
\end{align*}
where
\begin{equation*}
	\tilde L(\theta_j|\bY_n,\mathcal M_j)=\int_{\mathcal X^n}L(\theta_j|\bY_n,\tilde\bX_n,\mathcal M_j)d\pi(\tilde\bX_n|\theta_j,\mathcal M_j).
\end{equation*}
Letting 
\begin{equation*}
	\tilde\pi(\theta_j|\bY_n,\mathcal M_j)=\frac{\tilde L(\theta_j|\bY_n,\mathcal M_j)\pi(\theta_j|\mathcal M_j)}{\tilde m(\bY_n|\mathcal M_j)}
\end{equation*}
we have for all $\theta_j\in\Theta_j$,
\begin{equation*}
	\log \tilde m(\bY_n|\mathcal M_j) = \log \tilde L(\theta_j|\bY_n,\mathcal M_j)+\log \pi(\theta_j|\mathcal M_j)
	-\log \tilde\pi(\theta_j|\bY_n,\mathcal M_j),
\end{equation*}
which reduces the inverse marginal to the same form as that used by \ctn{Chatterjee18} for establishing their almost sure exponential BF convergence result
which depends explicitly on the KL-divergence rate between the postulated and the true models.
Hence, even in both the inverse setups that we consider, our PBF and BF convergence results agree.

We have illustrated our general asymptotic results for PBF with several theoretical examples, including linear, quadratic, AR(1) regression and 
variable selection, providing the explicit theoretical calculations for both forward and inverse setups. 
Our AR(1) regression results validate our general PBF convergence theory in a dependent data setup.

We also conducted extensive simulation experiments with small simulated datasets comparing Poisson log regression and geometric logit and probit regressions,
where the regressions are modeled by straight lines as well as Gaussian process based nonparametric functions. Both forward and inverse setups are undertaken,
which include, in addition, variable selection among two possible covariates. Among several insightful revelations, our results demonstrate 
that the inverse regression can outperform the forward counterpart when the regression considered is nonparametric. 

Thus, overall the premise for PBF investigation seems promising enough to pursue further research. In particular, we shall address PBF based variable selection in both forward
and inverse regression contexts in the so-called ``large $p$, small $n$" framework, where the number of variables considered increases with sample size with various
rates, crucially, at rates faster than the sample size. Various complex and high-dimensional real data based applications shall also be considered for model and
variable selection using forward and inverse PBF. More sophisticated computational methods combining advanced versions of TMCMC, bridge sampling and path sampling 
may need to be created for accurate estimations of PBF in such real situations. These ideas will be communicated elsewhere.

\newpage

\begin{appendix}

\section*{Appendix}

\section{Preliminaries for ensuring posterior consistency under general setup}
\label{sec:shalizi}

Following \ctn{Shalizi09} we consider a probability space $(\Omega,\mathcal F, P)$, 
and a sequence of random variables $y_1,y_2,\ldots$,   
taking values in some measurable space $(\Xi,\mathcal Y)$, whose
infinite-dimensional distribution is $P$. Let $\bY_n=\{y_1,\ldots,y_n\}$. The natural filtration of this process is
$\sigma(\bY_n)$, the smallest $\sigma$-field with respect to which $\bY_n$ is measurable. 

We denote the distributions of processes adapted to $\sigma(\bY_n)$ 
by $F_{\theta}$, where $\theta$ is associated with a measurable
space $(\Theta,\mathcal T)$, and is generally infinite-dimensional. 
For the sake of convenience, we assume, as in \ctn{Shalizi09}, that $P$
and all the $F_{\theta}$ are dominated by a common reference measure, with respective
densities $f_{\theta_0}$ and $f_{\theta}$. The usual assumptions that $P\in\Theta$ or even $P$ lies in the support 
of the prior on $\Theta$, are not required for Shalizi's result, rendering it very general indeed.

\subsection{Assumptions and theorems of Shalizi}
\label{subsec:assumptions_shalizi}

\begin{itemize}
\item[(S1)] Consider the following likelihood ratio:
\begin{equation*}
R_n(\theta)=\frac{f_{\theta}(\bY_n)}{f_{\theta_0}(\bY_n)}.
\end{equation*}
Assume that $R_n(\theta)$ is $\sigma(\bY_n)\times \mathcal T$-measurable for all $n>0$.
\end{itemize}

\begin{itemize}
\item[(S2)] For every $\theta\in\Theta$, the KL-divergence rate
\begin{equation*}
h(\theta)=\underset{n\rightarrow\infty}{\lim}~\frac{1}{n}E\left(\log\frac{f_{\theta_0}(\bY_n)}{f_{\theta}(\bY_n)}\right).
\end{equation*}
exists (possibly being infinite) and is $\mathcal T$-measurable.
\end{itemize}

\begin{itemize}
\item[(S3)] For each $\theta\in\Theta$, the generalized or relative asymptotic equipartition property holds, and so,
almost surely,
\begin{equation*}
\underset{n\rightarrow\infty}{\lim}~\frac{1}{n}\log R_n(\theta)=-h(\theta).
\end{equation*}
\end{itemize}

\begin{itemize}
\item[(S4)] 
Let $I=\left\{\theta:h(\theta)=\infty\right\}$. 
The prior $\pi$ satisfies $\pi(I)<1$.
\end{itemize}

\begin{itemize}
\item[(S5)] There exists a sequence of sets $\mathcal G_n\rightarrow\Theta$ as $n\rightarrow\infty$ 
such that: 
\begin{enumerate}
\item[(1)]
\begin{equation}
\pi\left(\mathcal G_n\right)\geq 1-\zeta\exp\left(-\gamma n\right),~\mbox{for some}~\zeta>0,~\gamma>2h(\Theta);
\label{eq:S5_1}
\end{equation}
\item[(2)]The convergence in (S3) is uniform in $\theta$ over $\mathcal G_n\setminus I$.
\item[(3)] $h\left(\mathcal G_n\right)\rightarrow h\left(\Theta\right)$, as $n\rightarrow\infty$.
\end{enumerate}
\end{itemize}
For each measurable $A\subseteq\Theta$, for every $\delta>0$, there exists a random natural number $\tau(A,\delta)$
such that
\begin{equation}
n^{-1}\log\int_{A}R_n(\theta)\pi(\theta)d\theta
\leq \delta+\underset{n\rightarrow\infty}{\lim\sup}~n^{-1}
\log\int_{A}R_n(\theta)\pi(\theta)d\theta,
\label{eq:limsup_2}
\end{equation}
for all $n>\tau(A,\delta)$, provided 
$\underset{n\rightarrow\infty}{\lim\sup}~n^{-1}\log\pi\left(\mathbb I_A R_n\right)<\infty$.
Regarding this, the following assumption has been made by Shalizi:
\begin{itemize}
\item[(S6)] The sets $\mathcal G_n$ of (S5) can be chosen such that for every $\delta>0$, the inequality
$n>\tau(\mathcal G_n,\delta)$ holds almost surely for all sufficiently large $n$.
\end{itemize}
\begin{itemize}
\item[(S7)] The sets $\mathcal G_n$ of (S5) and (S6) can be chosen such that for any set $A$ with $\pi(A)>0$, 
\begin{equation}
h\left(\mathcal G_n\cap A\right)\rightarrow h\left(A\right),
\label{eq:S7}
\end{equation}
as $n\rightarrow\infty$.
\end{itemize}


\section{A result on sufficient condition for (S6) of Shalizi}
\label{sec:s6}
\begin{theorem}
	\label{theorem:shalizi_Rn}
	Consider the following assumptions:
	\begin{itemize}
		\item[(i)] Let $\tilde\theta=\underset{\theta\in\Theta}{\arg\min}~h(\theta)$ be the unique minimizer of $h(\theta)$ on $\Theta$. 
		\item[(ii)] Let $\tilde\theta^*_n=\underset{\theta\in\Theta}{\arg\max}~\frac{1}{n}\log R_n(\theta)$, and
	assume that $\tilde\theta^*_n\stackrel{a.s.}{\longrightarrow}\tilde\theta,~\mbox{as}~n\rightarrow\infty$.
\item[(iii)] $\frac{1}{n}\log R_n(\theta)$ is stochastically equicontinuous on 
	compact subsets of $\Theta$. 
\item[(iv)] For all $\theta$ in such compact subsets,
	\begin{equation}
		\underset{n\rightarrow\infty}{\lim}~\frac{1}{n}\log R_n(\theta)=h(\theta),~\mbox{almost surely}.
		\label{eq:shalizi_ass1}
	\end{equation}
\item[(v)] The prior $\pi$ on $\Theta$ is proper.
	\end{itemize}
	Then (\ref{eq:shalizi_Rn}) holds. 

\end{theorem}
\begin{proof}
	Note that 
	\begin{align}
	\frac{1}{n}\log\int_{\mathcal G_n}R_n(\theta)\pi(\theta)d\theta
		&\leq \frac{1}{n}\log\left(\underset{\theta\in\mathcal G_n}{\sup}~R_n(\theta)\right)+\frac{1}{n}\log\pi(\mathcal G_n)\notag\\
		&=\underset{\theta\in\mathcal G_n}{\sup}~\frac{1}{n}\log R_n(\theta)+\frac{1}{n}\log\pi(\mathcal G_n)\notag\\
		&=\frac{1}{n}\log R_n(\tilde\theta^*_n)+\frac{1}{n}\log\pi(\mathcal G_n).
		\label{eq:shalizi0}
	\end{align}
	Since by condition (ii), $\tilde\theta^*_n\stackrel{a.s.}{\longrightarrow}\tilde\theta$ as $n\rightarrow\infty$, 
	for any $\epsilon>0$, there exists $n_0(\epsilon)\geq 1$ such that for $n\geq n_0(\epsilon)$,  
	\begin{equation}
		\tilde\theta^*_n\in (\tilde\theta-\epsilon,\tilde\theta+\epsilon),~\mbox{almost surely}.
		\label{eq:shalizi_ass2}
	\end{equation}

	Conditions (iii) and (iv) validate the stochastic Ascoli lemma, and hence, for any compact subset $G$ of $\Theta$ that contains 
	$(\tilde\theta-\epsilon,\tilde\theta+\epsilon)$,
	\begin{equation*}
		\underset{n\rightarrow\infty}{\lim}\underset{\theta\in G}{\sup}~\left|\frac{1}{n}\log R_n(\theta)+h(\theta)\right|=0,~\mbox{almost surely}.
	\end{equation*}
	Hence, for any $\xi>0$, for all $\theta\in G$, almost surely,
	\begin{equation}
		\frac{1}{n}\log R_n(\theta)\leq -h(\theta)+\eta
		\leq -h(\Theta)+\eta,~\mbox{for sufficiently large}~n.
		\label{eq:shalizi_ass3}
	\end{equation}
	Since $G$ contains $(\tilde\theta-\epsilon,\tilde\theta+\epsilon)$, which, in turn contains $\tilde\theta^*_n$ for sufficiently large $n$, due to 
	(\ref{eq:shalizi_ass2}), it follows from (\ref{eq:shalizi_ass3}), that for any $\xi>0$,
	\begin{equation}
		\frac{1}{n}\log R_n(\tilde\theta^*_n)
		\leq -h(\Theta)+\eta,~\mbox{for sufficiently large}~n.
		\label{eq:shalizi_ass4}
	\end{equation}
	The proof follows by combining (\ref{eq:shalizi0}) and (\ref{eq:shalizi_ass4}), and noting that $\frac{1}{n}\log\pi(\mathcal G_n)<0$ for all $n\geq 1$,
	since $0<\pi(\mathcal G_n)<1$ for proper priors.

\end{proof}

\section{Proof of Theorem \ref{th:uniroot}}
\label{sec:proof1}
	Our proof uses concepts that are broadly similar to that of Theorem 10 of \ctn{Chandra20}.
	Here we shall provide the proof for $\frac{1}{n}\log R^{(1)}_n(\theta)$ since that for $\frac{1}{n}\log R^{(2)}_n(\theta)$ is exactly the same. For notational
	convenience, we denote $\frac{1}{n}\log R^{(1)}_n(\theta)$ by $\frac{1}{n}\log R_n(\theta)$, $h_1(\theta)$ by $h(\theta)$, $\tilde\theta_1$ by $\tilde\theta$
	and $\Theta_1$ by $\Theta$. 

	Since $h(\theta)$ is convex, $\tilde\theta$ must be an interior point of $\Theta$. Hence, there exists a compact set $G\subset\Theta$ such that
	$\tilde\theta$ is interior to $G$.
	From convergence (\ref{eq:pointconv1}) which is also uniform on compact sets, it follows that
	\begin{equation}
	\lim_{n\rightarrow\infty}\sup_{\theta\in G}~\left| \frac{1}{n}\log R_n(\theta)  + h(\theta)\right|=0.
	\label{eq:unifG}
	\end{equation}
	For any $\eta>0$, we define 
	\[
	N_\eta(\tilde\theta)=\{\theta:\|\tilde\theta-\theta\|<\eta\};~N'_\eta(\tilde\theta)=\{\theta:\|\tilde\theta-\theta\|=\eta\};~\overline{N}_\eta(\tilde\theta)
	=\{\theta:\|\tilde\theta-\theta\|\leq\eta\}.
	\]
	Note that for sufficiently small $\eta$, $\overline{N}_\eta(\tilde\theta)\subset G$. Let $H=\underset{\theta\in N'_\eta(\tilde\theta)}{\inf}~h(\theta)$. 
	Since $h(\theta)$ is minimum at $\theta=\tilde\theta$, $H>0$. Let us fix an $\varepsilon$ such that $0<\varepsilon<H$. 
	Then by \eqref{eq:unifG}, for large enough $n$ all $\theta\in N'_\eta(\tilde\theta)$,
	\begin{equation}
	\frac{1}{n}\log R_n(\theta)  <- h(\theta)+\varepsilon<-h(\tilde\theta)+\varepsilon.
		\label{eq:r1}
	\end{equation}
	Since by (\ref{eq:pointconv1}) $\frac{1}{n}\log R_n(\tilde\theta)>-h(\tilde\theta)-\varepsilon$ for sufficiently large $n$, it follows from this and (\ref{eq:r1})
	that
	\begin{equation}
		\frac{1}{n}\log R_n(\theta)  < \frac{1}{n}\log R_n(\tilde\theta)+2\varepsilon,
		\label{eq:r2}
	\end{equation}
	for sufficiently large $n$. Since $0<\varepsilon<H$ is arbitrary, it follows that for all $\theta\in N'_\eta(\tilde\theta)$, for large enough $n$,
	\begin{equation}
		\frac{1}{n}\log R_n(\theta)  < \frac{1}{n}\log R_n(\tilde\theta),
		\label{eq:r3}
	\end{equation}
	which shows that for large enough $n$, the maximum of $\frac{1}{n}\log R_n(\theta)$ is not attained at the boundary $N'_\eta(\tilde\theta)$. 
	Hence, the maximum must occur in the interior of $\overline{N}_\eta(\tilde\theta)$ when $n$ is sufficiently large. That the maximizer is unique
	is guaranteed by Theorem \ref{th:concave}.
	Hence, the result is proved.

\end{appendix}

\bibliography{irmcmc}

\begin{thebibliography}{}

\bibitem[Bartlett(1957)Bartlett]{Bartlett57}
Bartlett, M. (1957).
\newblock {A} {C}omment on {D. V.} {L}indley's {S}tatistical {P}aradox.
\newblock {\em Biometrika\/}, {\bf 44}, 533--534.

\bibitem[Bhattacharya(2008)Bhattacharya]{Bhattacharya08}
Bhattacharya, S. (2008).
\newblock {G}ibbs {S}ampling {B}ased {B}ayesian {A}nalysis of {M}ixtures with
  {U}nknown {N}umber of {C}omponents.
\newblock {\em Sankhya. Series B\/}, {\bf 70}, 133--155.

\bibitem[Bhattacharya and Haslett(2007)Bhattacharya and Haslett]{Bhatta07}
Bhattacharya, S. and Haslett, J. (2007).
\newblock {I}mportance {R}e-sampling {MCMC} for {C}ross-{V}alidation in
  {I}nverse {P}roblems.
\newblock {\em Bayesian Analysis\/}, {\bf 2}, 385--408.

\bibitem[Chandra and Bhattacharya(2020)Chandra and Bhattacharya]{Chandra20}
Chandra, N.~K. and Bhattacharya, S. (2020).
\newblock {A}symptotic {T}heory of {D}ependent {B}ayesian {M}ultiple {T}esting
  {P}rocedures {U}nder {P}ossible {M}odel {M}isspecification.
\newblock ArXiv Preprint.

\bibitem[Chatterjee and Bhattacharya(2017)Chatterjee and
  Bhattacharya]{Chatterjee17}
Chatterjee, D. and Bhattacharya, S. (2017).
\newblock {A} {S}tatistical {P}erspective of {I}nverse and {I}nverse
  {R}egression {P}roblems.
\newblock {\em RASHI\/}, {\bf 2}, 67--82.
\newblock Latest version available at ArXiv.

\bibitem[Chatterjee and Bhattacharya(2019a)Chatterjee and
  Bhattacharya]{Chatterjee1}
Chatterjee, D. and Bhattacharya, S. (2019a).
\newblock {O}n {P}osterior {C}onvergence of {G}aussian and {G}eneral
  {S}tochastic {P}rocess {R}egression {U}nder {P}ossible {M}isspecifications.
\newblock ArXiv Preprint.

\bibitem[Chatterjee and Bhattacharya(2019b)Chatterjee and
  Bhattacharya]{Chatterjee2}
Chatterjee, D. and Bhattacharya, S. (2019b).
\newblock {P}osterior {C}onvergence of {N}onparametric {B}inary and {P}oisson
  {R}egression {U}nder {P}ossible {M}isspecifications.
\newblock ArXiv Preprint.

\bibitem[Chatterjee and Bhattacharya(2020)Chatterjee and Bhattacharya]{Chat20}
Chatterjee, D. and Bhattacharya, S. (2020).
\newblock {P}osterior {C}onsistency of {B}ayesian {I}nverse {R}egression and
  {I}nverse {R}eference {D}istributions.
\newblock ArXiv Preprint.

\bibitem[Chatterjee {\em et~al.}(2018)Chatterjee, Maitra, and
  Bhattacharya]{Chatterjee18}
Chatterjee, D., Maitra, T., and Bhattacharya, S. (2018).
\newblock {A} {S}hort {N}ote on {A}lmost {S}ure {C}onvergence of {B}ayes
  {F}actors in the {G}eneral {S}et-{U}p.
\newblock {\em The American Statistician\/}, {\bf 72}(1), 17--20.

\bibitem[Dutta and Bhattacharya(2014)Dutta and Bhattacharya]{Dutta13}
Dutta, S. and Bhattacharya, S. (2014).
\newblock {M}arkov {C}hain {M}onte {C}arlo {B}ased on {D}eterministic
  {T}ransformations.
\newblock {\em Statistical Methodology\/}, {\bf 16}, 100--116.
\newblock Also available at http://arxiv.org/abs/1106.5850. Supplement
  available at http://arxiv.org/abs/1306.6684.

\bibitem[Geisser and Eddy(1979)Geisser and Eddy]{Geisser79}
Geisser, S. and Eddy, W.~F. (1979).
\newblock {A} predictive approach to model selection.
\newblock {\em Journal of the American Statistical Association\/}, {\bf
  74}(365), 153--160.

\bibitem[Gelfand(1996)Gelfand]{Gelfand96}
Gelfand, A.~E. (1996).
\newblock Model determination using sampling-based methods.
\newblock In W.~Gilks, S.~Richardson, and D.~Spiegelhalter, editors, {\em
  {M}arkov {C}hain {M}onte {C}arlo in {P}ractice\/}, Interdisciplinary
  Statistics, pages 145--162, London. Chapman and Hall.

\bibitem[Gelfand and Dey(1994)Gelfand and Dey]{Gelfand94}
Gelfand, A.~E. and Dey, D.~K. (1994).
\newblock Bayesian model choice: {A}symptotics and exact calculations.
\newblock {\em Journal of the Royal Statistical Society B\/}, {\bf 56}(3),
  501--514.

\bibitem[Gelman and Meng(1998)Gelman and Meng]{Gelman98}
Gelman, A. and Meng, X.-L. (1998).
\newblock {S}imulating {N}ormalizing {C}onstants: {F}rom {I}mportance
  {S}ampling to {B}ridge {S}ampling to {P}ath {S}ampling.
\newblock {\em Statistical Science\/}, {\bf 13}(2), 163--185.

\bibitem[Ghosal and {van derVaart}(2017)Ghosal and {van derVaart}]{Ghosal17}
Ghosal, A. and {van derVaart}, A. (2017).
\newblock {\em {F}undamentals of {N}onparametric {B}ayesian {I}nference\/}.
\newblock Cambridge University Press, Cambridge, UK.

\bibitem[Ghosh and Ramamoorthi(2003)Ghosh and Ramamoorthi]{Ghosh03}
Ghosh, J.~K. and Ramamoorthi, R.~V. (2003).
\newblock {\em {B}ayesian {N}onparametrics\/}.
\newblock Springer, New York, USA.

\bibitem[Gronau {\em et~al.}(2017)Gronau, Sarafoglou, Matzke, Ly, Boehm,
  Marsman, Leslie, Forster, Wagenmakers, and Steingroever]{Gronau17}
Gronau, Q.~F., Sarafoglou, A., Matzke, D., Ly, A., Boehm, U., Marsman, A.,
  Leslie, D.~S., Forster, J.~J., Wagenmakers, E.-J., and Steingroever, H.
  (2017).
\newblock {A} {T}utorial on {B}ridge {S}ampling.
\newblock {\em Journal of Mathematical Psychology\/}, {\bf 81}, 80--97.

\bibitem[Jeffreys(1939)Jeffreys]{Jeffreys39}
Jeffreys, H. (1939).
\newblock {\em {T}heory of {P}robability\/}.
\newblock 1st edition. The Clarendon Press, Oxford.

\bibitem[Kass and Raftery(1995)Kass and Raftery]{Kass95}
Kass, R.~E. and Raftery, R.~E. (1995).
\newblock Bayes factors.
\newblock {\em Journal of the American Statistical Association\/}, {\bf
  90}(430), 773--795.

\bibitem[Lindley(1957)Lindley]{Lindley57}
Lindley, D. (1957).
\newblock {A} {S}tatistical {P}aradox.
\newblock {\em Biometrika\/}, {\bf 44}, 187--192.

\bibitem[Macon and Spitzbart(1958)Macon and Spitzbart]{Macon58}
Macon, N. and Spitzbart, A. (1958).
\newblock Inverses of vandermonde matrices.
\newblock {\em The American Mathematical Monthly\/}, {\bf 65}(2), 95--100.

\bibitem[Meng and Wong(1996)Meng and Wong]{Meng96}
Meng, X.~L. and Wong, W.~H. (1996).
\newblock {S}imulating {R}atios of {N}ormalizing {C}onstants via a {S}imple
  {I}dentity: {A} {T}theoretical {E}xploration.
\newblock {\em Statistica Sinica\/}, {\bf 6}, 831--860.

\bibitem[Robert(1993)Robert]{Robert93}
Robert, C.~P. (1993).
\newblock {A} {N}ote on {J}effreys-{L}indley {P}aradox.
\newblock {\em Statistica Sinica\/}, {\bf 3}, 601--608.

\bibitem[Shalizi(2009)Shalizi]{Shalizi09}
Shalizi, C.~R. (2009).
\newblock {D}ynamics of {B}ayesian {U}pdating {W}ith {D}ependent {D}ata and
  {M}isspecified {M}odels.
\newblock {\em Electronic Journal of Statistics\/}, {\bf 3}, 1039--1074.

\bibitem[Villa and Walker(2015)Villa and Walker]{Villa15}
Villa, C. and Walker, S. (2015).
\newblock {O}n the {M}athematics of the {J}effreys-{L}indley {P}aradox.
\newblock Available at arXiv:1503.04098.

\end{thebibliography}

\end{document}